\tikzset{Box/.style={very thick, rounded corners}}
\tikzset{marked/.style={star, star point height = .75mm, star points =5, fill=black,minimum size=2mm, inner sep=0mm} }
\tikzset{verythickline/.style = {line width=7pt}}
\tikzset{thickline/.style = {line width=5pt}}
\tikzset{medthick/.style = {line width=3pt}}
\tikzset{med/.style = {line width=2pt}}
\tikzset{count/.style = {fill=white,circle,draw,thin, inner sep=2pt}}
\tikzset{rcount/.style = {fill=white,rectangle,draw,thin,inner sep=2pt, rounded corners}}
\tikzset{cpr/.style = {draw,fill=white,rectangle,thin, rounded corners}}
\begin{document}


\newcommand{\supp}{\text{supp}}
\newcommand{\Aut}{\text{Aut}}
\newcommand{\Gal}{\text{Gal}}
\newcommand{\Inn}{\text{Inn}}
\newcommand{\Irr}{\text{Irr}}
\newcommand{\Ker}{\text{Ker}}
\newcommand{\N}{\mathbb{N}}
\newcommand{\Z}{\mathbb{Z}}
\newcommand{\Q}{\mathbb{Q}}
\newcommand{\R}{\mathbb{R}}
\newcommand{\C}{\mathbb{C}}
\renewcommand{\H}{\mathcal{H}}
\newcommand{\B}{\mathcal{B}}
\newcommand{\A}{\mathcal{A}}
\newcommand{\K}{\mathcal{K}}
\newcommand{\M}{\mathcal{M}}

\newcommand{\J}{\mathscr{J}}
\newcommand{\D}{\mathscr{D}}
\renewcommand{\P}{\mathscr{P}}

\newcommand{\ul}[1]{\underline{#1}}

\newcommand{\I}{\text{I}}
\newcommand{\II}{\text{II}}
\newcommand{\III}{\text{III}}

\newcommand{\<}{\left\langle}
\renewcommand{\>}{\right\rangle}
\renewcommand{\Re}[1]{\text{Re}\ #1}
\renewcommand{\Im}[1]{\text{Im}\ #1}
\newcommand{\dom}[1]{\text{dom}\,#1}
\renewcommand{\i}{\text{i}}
\renewcommand{\mod}[1]{(\operatorname{mod}#1)}
\newcommand{\mb}[1]{\mathbb{#1}}
\newcommand{\mc}[1]{\mathcal{#1}}
\newcommand{\mf}[1]{\mathfrak{#1}}
\newcommand{\im}{\operatorname{im}}


\newtheorem{thm}{Theorem}[section]
\newtheorem{prop}[thm]{Proposition}
\newtheorem{lem}[thm]{Lemma}
\newtheorem{cor}[thm]{Corollary}
\newtheorem{innercthm}{Theorem}
\newenvironment{cthm}[1]
{\renewcommand\theinnercthm{#1}\innercthm}
{\endinnercthm}
\newtheorem{innerclem}{Lemma}
\newenvironment{clem}[1]
{\renewcommand\theinnerclem{#1}\innerclem}
{\endinnerclem}

\theoremstyle{definition}
\newtheorem{defi}[thm]{Definition}
\newtheorem{ex}[thm]{Example}
\newtheorem*{exs}{Examples}
\newtheorem{rem}[thm]{Remark}
\newtheorem{innercdefi}{Definition}
\newenvironment{cdefi}[1]
{\renewcommand\theinnercdefi{#1}\innercdefi}
{\endinnercdefi}


\title{Free transport for finite depth subfactor planar algebras}

\author{Brent Nelson}
\address{UCLA Mathematics Department}
\email{bnelson6@math.ucla.edu}
\thanks{Research supported by NSF grants DMS-1161411 and DMS-0838680}

\begin{abstract}
Given a finite depth subfactor planar algebra $\mc{P}$ endowed with the graded $*$-algebra structures $\{Gr_k^+ \mc{P}\}_{k\in\N}$ of Guionnet, Jones, and Shlyakhtenko, there is a sequence of canonical traces $Tr_{k,+}$ on $Gr_k^+\mc{P}$ induced by the Temperley-Lieb diagrams and a sequence of trace-preserving embeddings into the bounded operators on a Hilbert space. Via these embeddings the $*$-algebras $\{Gr_k^+\mc{P}\}_{k\in \N}$ generate a tower of non-commutative probability spaces $\{M_{k,+}\}_{k\in\N}$ whose inclusions recover $\mc{P}$ as its standard invariant. We show that traces $Tr_{k,+}^{(v)}$ induced by certain small perturbations of the Temperley-Lieb diagrams yield trace-preserving embeddings of $Gr_k^+\mc{P}$ that generate the same tower $\{M_{k,+}\}_{k\in\N}$.
\end{abstract}

\maketitle


\section{Introduction}

Despite the relatively innocuous definition of a subfactor, Jones showed in \cite{J83}, \cite{J99}, and \cite{J00} that there is in fact an incredibly rich structure underlying the inclusion of one $\text{II}_1$ factor in another. In particular, one can associate to a subfactor $N\subset M$ its standard invariant: a planar algebra. It was later shown by Popa in \cite{P95} that in fact every subfactor planar algebra can be realized through this association.

In \cite{GJS10} Guionnet, Jones, and Shlyakhtenko produce an alternate proof of this fact by constructing the subfactors via free probabilistic methods. Given a subfactor planar algebra $\mc{P}$, for each $k\geq 0$ one can turn $Gr_k^+\mc{P}=\oplus_{n\geq k} \mc{P}_{n,+}$ into a $*$-algebra with a trace $Tr_{k,+}$ defined by a particular pairing with Temperley-Lieb diagrams.  Then each $Gr_k^+\mc{P}$ embeds into the bounded operators on a Hilbert space and generates a $\text{II}_1$ factor $M_{k,+}$. Moreover, one can define inclusion maps $i^{k-1}_k\colon M_{k-1,+}\to M_{k,+}$ so that the standard invariant associated to the subfactor inclusion $i_k^{k-1}(M_{k-1,+})\subset M_{k,+}$ (for any $k\geq 1$) recovers $\mc{P}$ as its standard invariant. The embedding relies on the fact that a subfactor planar algebra $\mc{P}$ always embeds into the planar algebra of a bipartite graph $\mc{P}^\Gamma$ (\emph{cf.} \cite{J00}, \cite{JP10}, and \cite{MW10}). 

It turns out that $Gr_0^+\mc{P}$ embeds as a subalgebra of a free Araki-Woods factor. Free Araki-Woods factors and their associated free quasi-free states, studied by Shlyakhtenko in \cite{S97}, are type $\mathrm{III}_\lambda$ factors, $0<\lambda\leq 1$, and can be thought of as the non-tracial analogues of the free group factors. They are constructed starting from a strongly continuous one-parameter group of orthogonal transformations $\{U_t\}_{t\in\R}$ on a real Hilbert space $\H_\R$. When $U_t=1$ for all $t$, this construction simply yields the free group factor $L(\mathbb{F}_{\dim{\H_\R}})$. Stone's theorem guarantees the existence of a positive, non-singular generator $A$ satisfying $A^{it}=U_t$ for all $t\in \R$. It was shown in \cite{S97} that the type classification of a free Araki-Woods factor is determined by the spectrum of the generator $A$. Moreover, the action of the modular automorphism group is well known and also depends explicitly on $A$.  In \cite{N13}, by adapting the free transport methods of Guionnet and Shlyakhtenko (\emph{cf.} \cite{GS11}), it was shown that non-commutative random variables whose joint law is ``close'' to a free quasi-free state in fact generate a free Araki-Woods factor. In particular, the finitely generated $q$-deformed Araki-Woods algebras were shown to be isomorphic to the free Araki-Woods factor for small $|q|$. In this paper we show that the free transport machinery can be encoded via planar tangles and provide an application of free transport to finite depth subfactor planar algebras.

Let $\mc{P}$ be a finite depth subfactor planar algebra and $Tr\colon \mc{P}\to\C$ be the state induced by the Temperley-Lieb diagrams via duality. By using the transport construction methods of \cite{N13}, we show that we can perturb the embedding constructed in \cite{GJS10} to make it state-preserving for states on $\mc{P}$ which are ``close'' to $Tr$. Moreover, the von Neumann algebra generated by the subfactor planar algebra via this embedding is unchanged. In this context, if $\mc{P}$ embeds into $\mc{P}^\Gamma$ and $\mu$ is the Perron-Frobenius eigenvector for the bipartite graph $\Gamma$, then the generator $A$ associated to the free Araki-Woods algebra will be determined by $\mu$.

The free transport methods in \cite{GS11} and \cite{N13} apply only to joint laws of finitely many non-commutative random variables. Since each edge in the graph $\Gamma$ will correspond to a non-commutative random variable, we can only consider finite depth subfactor planar algebras with these methods.

\subsection*{Acknowledgments}

I would like to thank Arnaud Brothier and Michael Hartglass for many useful discussions and Dimitri Shlyakhtenko for the initial idea of the paper, many helpful suggestions, and his general guidance.


\section{Planar Algebras}\label{Planar_algebras}

We briefly recall the definitions of a planar algebra and planar tangle. For additional details, see \cite{J99}, \cite{GJS10}, and \cite{GJSZJ12}, \cite{HP14}.

\begin{defi}
A \emph{planar algebra} is a collection of graded vector spaces $\mc{P}=\{\mc{P}_{n,\epsilon}\}_{n\geq 0,\epsilon\in\{\pm\}}$ possessing a conjugate linear involution $*$. For each $k\geq 0$ we call $\mc{P}_k:=\mc{P}_{k,+}\oplus \mc{P}_{k,-}$ the \emph{$k$-box space of $\mc{P}$}. A planar algebra also admits an action by \emph{planar tangles}. A planar tangle consists of an output disc $D_0\subset \R^2$ and several input discs $D_1,\ldots, D_r\subset D_0$, each disc $D_j$, $0\leq j\leq r$, having $2k_j$ boundary points ($k_j\geq 0$). These boundary points divide the boundaries of the discs into separate intervals and the \emph{distinguished interval} is marked with a ``$\star$.'' Each boundary point is paired with another boundary point (potentially from a distinct disc) and connected via non-crossing strings in $D_0\setminus (D_1\cup\cdots\cup D_r)$. The strings divide $D_0\setminus (D_1\cup\cdots\cup D_r)$ into several regions which are then shaded black or white so that adjacent regions have different shades.

Let $T$ be a planar tangle whose output disc $D_0$ has $2k_0$ boundary points and whose input discs $D_1,\ldots, D_r$ have $2k_1,\ldots, 2k_r$ boundary points. For each $j=0,\ldots, r$ we define $\epsilon_j\in\{+,-\}$ to be $+$ if the distinguished interval of $D_j$ borders a white region and $-$ otherwise. Then $T$ corresponds to a multilinear map $Z_T\colon \mc{P}_{k_1,\epsilon_1}\times\cdots\times \mc{P}_{k_r,\epsilon_r}\to \mc{P}_{k_0,\epsilon_0}$. These maps satisfy the following conditions.
\begin{enumerate}
\item \textbf{Isotopy invariance:} if $F$ is an orientation preserving diffeomorphism of $\R^2$ then $Z_{T}=Z_{F(T)}$.
\item \textbf{Naturality:} gluing planar tangles into one another corresponds to composing the multilinear maps.
\item \textbf{Involutive:} if $G$ is an orientation reversing diffeomorphism of $\R^2$ then
\[
Z_T(x_1,\ldots, x_r)^*=Z_{G(T)}(x_1^*,\ldots, x_r^*).
\]
\end{enumerate}
Furthermore, there is a canonical scalar $\delta$ associated with $\mc{P}$ with the property that a tangle with a closed loop is equivalent to $\delta$ times the tangle with the closed loop removed.
.
\end{defi}

In light of the isotopy invariance of the planar tangles, we will usually depict the input discs as rectangles with all strings emanating from the top side and the distinguished interval being formed by the other sides. For example:
\[
\begin{tikzpicture}[thick, scale=.5]
\draw[Box, fill=gray] (-0.5,0.5) rectangle (4,3);
\node[marked, scale=.8, above left] at (-0.5,3) {};
\draw[fill=white] (0.25,3) -- (3,3) --++ (0,-1) --++ (-.55,0) arc(0:180:0.6 and 0.5) --++ (-.25,0) arc (0:180:0.25 and 0.5) --++ (-0.25,0) --++ (0,1);
\draw[Box, fill=white] (0,1) rectangle (1.5,2);
\node at (0.75,1.5) {$D_1$};
\node[marked, scale=.8, above left] at (0,2) {};
\draw[thick,] (0.25,2) --++ (0,1);
\draw[thick,] (0.5,2) arc (180:0:0.25 and 0.5);
\draw[thick] (1.25,2) arc (180:0:0.6 and 0.5);
\draw[Box, fill=white] (2,1) rectangle (3.5,2);
\node at (2.75,1.5) {$D_2$};
\node[marked, scale=.8, above left] at (2,2) {};
\draw[thick] (3,2) --++ (0,1);
\end{tikzpicture}
\]
corresponds to a multilinear map $\mc{P}_{2,-}\times \mc{P}_{1,-}\to \mc{P}_{1,-}$. We shall usually omit drawing the output disc and the shading.

Given a planar algebra $\mc{P}$ we define $Gr_k^{\pm}\mc{P}=\bigoplus_{n\geq k} \mc{P}_{n,\pm}$ and $Gr_k\mc{P} = Gr_k^+\mc{P}\oplus Gr_k^{-}\mc{P}$ for each $k\geq 0$. An element of $x\in Gr_k\mc{P}$ can be visually represented as
\[
\begin{tikzpicture}[thick, scale=.5]
\draw[Box] (0, 0) rectangle (2,1);
\node at (1,0.5) {$x$};
\node[marked, scale=.8, above left] at (0,1) {};
\draw[thickline] (0, 0.5) --++ (-0.5,0);
\draw[thickline] (2, 0.5) --++ (0.5,0);
\draw[thickline] (1,1) --++ (0,0.5);
\node[right] at (2.5,0.3) {,};
\end{tikzpicture}
\]
where the thick lines on the left and right each represent $k$ strings, the thick line on top is an even number of strings (possibly zero), and the shading of the region bordered by the distinguished interval varies according to the components of $x$. $Gr_k\mc{P}$ is endowed with the multiplication
\[
\begin{tikzpicture}[thick, scale=.5]
\node[left] at (-0.5,0.5) {$x\wedge_k y=$};
\draw[Box] (0, 0) rectangle (2,1);
\node at (1,0.5) {$x$};
\node[marked, scale=.8, above left] at (0,1) {};
\draw[thickline] (0, 0.5) --++ (-0.5,0);
\draw[thickline] (2, 0.5) --++ (0.5,0);
\draw[thickline] (1,1) --++ (0,0.5);
\draw[Box] (3, 0) rectangle (5,1);
\node at (4,0.5) {$y$};
\node[marked, scale=.8, above left] at (3,1) {};
\draw[thickline] (3, 0.5) --++ (-0.5,0);
\draw[thickline] (5, 0.5) --++ (0.5,0);
\draw[thickline] (4,1) --++ (0,0.5);

\end{tikzpicture}
\]
(with products of components with incompatible shadings taken to be zero), and the involution
\[
\begin{tikzpicture}[thick, scale=.5]
\node[left] at (-0.5,0.75) {$x^\dagger=$};
\draw[Box] (-0.5,-0.25) rectangle (2.5,1.5);
\node[marked, scale=.8, above left] at (-0.5,1.5) {};
\draw[Box] (0, 0) rectangle (2,1);
\node at (1,0.5) {$x^*$};
\node[marked, scale=.8, above right] at (2,1) {};
\draw[thickline] (0, 0.5) --++ (-0.5,0);
\draw[thickline] (2, 0.5) --++ (0.5,0);
\draw[thickline] (1,1) --++ (0,0.5);
\node[right] at (2.5,0.3) {.};
\end{tikzpicture}
\]

Now let $\text{TL}\subset \mc{P}$ be the canonical copy of the Temperley-Lieb planar algebra, and $TL_n$ the sum of all the Temperley-Lieb diagrams with $2n$ boundary points (including both shadings). Then we consider the $\mc{P}_{0,+}\oplus\mc{P}_{0,-}$ valued map $Tr_k$ on $Gr_k\mc{P}$ defined for $x\in \mc{P}_{n+k,+}\oplus \mc{P}_{n+k,-}$ by
\[
\begin{tikzpicture}[thick, scale=.5]
\node[left] at (4,0.5) {$\displaystyle Tr_k(x)=\frac{1}{\delta^k}$};
\draw[Box] (4.5,0) rectangle (6.5,1);
\node at (5.5,0.5) {$x$};
\node[marked, scale=.8, above left] at (4.5,1) {};
\draw[thickline] (5.5,1) --++ (0,0.5);
\draw[thickline] (4.5,0.5) arc(90:270:0.5)--++ (2,0) arc(-90:90:0.5);
\draw[Box] (4.5,1.5) rectangle (6.5,2.5);
\node at (5.5, 2) {$TL_n$};
\node[marked, scale=.8, below right] at (6.5,1.5) {};
\node[below right] at (7.0,0.5) {.};
\end{tikzpicture}
\]

Let $Gr_0[[\mc{P}]]$ denote the family of formal power series on elements in $Gr_0\mc{P}$. As a vector space, this is equivalent to $\displaystyle \prod_{\epsilon\in\{\pm\}, n\geq 0} \mc{P}_{n,\epsilon}$. Then if $TL_\infty:=\sum_{n\geq 0} TL_n \in Gr_0[[\mc{P}]]$, we can define $Tr_k(x)$ for a general $x\in Gr_k\mc{P}$ simply by
\[
\begin{tikzpicture}[thick, scale=.5]
\node[left] at (4,0.5) {$\displaystyle Tr_k(x)=\frac{1}{\delta^k}$};
\draw[Box] (4.5,0) rectangle (6.5,1);
\node at (5.5,0.5) {$x$};
\node[marked, scale=.8, above left] at (4.5,1) {};
\draw[thickline] (5.5,1) --++ (0,0.5);
\draw[thickline] (4.5,0.5) arc(90:270:0.5)--++ (2,0) arc(-90:90:0.5);
\draw[Box] (4.5,1.5) rectangle (6.5,2.5);
\node at (5.5, 2) {$TL_\infty$};
\node[marked, scale=.8, below right] at (6.5,1.5) {};
\node[below right] at (7.0,0.5) {,};
\end{tikzpicture}
\]
since the only components of $TL_\infty$ which will contribute non-zero terms are those matching the components of $x$, of which there are a finite number. In fact, given any $f\in Gr_0[[\mc{P}]]$ we can define a $\mc{P}_{0,+}\oplus\mc{P}_{0,-}$ valued map with
\begin{equation}\label{duality}
\begin{tikzpicture}[thick, scale=.5]
\node[left] at (4,0.5) {$Gr_k\mc{P}\ni x\longmapsto$};
\draw[Box] (4.5,0) rectangle (6.5,1);
\node at (5.5,0.5) {$x$};
\node[marked, scale=.8, above left] at (4.5,1) {};
\draw[thickline] (5.5,1) --++ (0,0.5);
\draw[thickline] (4.5,0.5) arc(90:270:0.5)--++ (2,0) arc(-90:90:0.5);
\draw[Box] (4.5,1.5) rectangle (6.5,2.5);
\node at (5.5, 2) {$f$};
\node[marked, scale=.8, below right] at (6.5,1.5) {};
\node[below right] at (7.0,0.5) {.};
\end{tikzpicture}
\end{equation}


\subsection{Subfactor planar algebras}

\begin{defi}
A \emph{subfactor planar algebra} $\mc{P}$ is a planar algebra satisfying:
\begin{enumerate}
\item $\dim(\mc{P}_{n,\pm})<\infty$ for all $(n,\pm)$;

\item $\dim(\mc{P}_{0,\pm})=1$;

\item for each $(n,\pm)$ the sesquiliner form where the thick string denotes $2n$ strings
\[
	\begin{tikzpicture}[thick, scale=.5]
	\node[left] at (0.5,1) {$\<b,a\>=$};

	\draw[Box] (0.5,0.5) rectangle (1.5,1.5);
	\node at (1,1) {$b^*$};
	\node[marked, scale=.8, above left] at (0.5,1.5) {};
	\draw[thickline] (1,1.5) arc(180:0:0.75);
	\draw[Box] (2,0.5) rectangle (3,1.5);
	\node at (2.5,1) {$a$};
	\node[marked, scale=.8, above left] at (2,1.5) {};
	\node[right] at (4,1) {$a,b\in\mc{P}_{n,\pm}$};
	\end{tikzpicture}
\]
(shaded according to $\pm$) is positive definite; and
\item the equality
\[
	\begin{tikzpicture}[thick, scale=.5]
	\draw[Box] (0,0) rectangle (1,1);
	\node at (0.5,0.5) {$x$};
	\node[marked, scale=.8, above left] at (0,1) {};
	\draw[thick] (0.25,1) arc(0:180:0.35 and 0.5) --++ (0,-.85) arc(180:270:0.45) --++ (1,0) arc(270:360:0.45) --++ (0,.85) arc (0:180:0.35 and 0.5);
	\node at (2.1,0.5) {$=$};
	\draw[Box] (2.7,0) rectangle (3.7,1);
	\node at (3.2,0.5) {$x$};
	\node[marked, scale=.8, above left] at (2.7,1) {};
	\draw[thick] (2.95,1) arc(180:0:0.25 and 0.5);
	\end{tikzpicture}
\]
holds for any $x\in \mc{P}_{1,\pm}$.
\end{enumerate}
\end{defi}

\begin{rem}
As in condition (3) above, all inner products in this paper will be complex linear in the second coordinate.
\end{rem}

The condition $\dim(P_{0,\pm})=1$ implies that each $P_{0,\pm}$ is isomorphic to $\C$ as $C^*$-algebras with the multiplication
\[
\begin{tikzpicture}[thick, scale=.5]
\node[left] at (0.5,1) {$ab=$};

\draw[Box] (0.5,0.5) rectangle (1.5,1.5);
\node at (1,1) {$a$};
\node[marked, scale=.8, above left] at (0.5,1.5) {};
\draw[Box] (2,0.5) rectangle (3,1.5);
\node at (2.5,1) {$b$};
\node[marked, scale=.8, above left] at (2,1.5) {};
\node[below right] at (3,1) {.};
\end{tikzpicture}
\]

Because of property $(2)$, the maps $Tr_k$ defined above are in fact $\C^2$-valued and we think of them as scalar valued when restricted to either $Gr_k^+\mc{P}$ or $Gr_k^-\mc{P}$. Write $Tr_k(x)=(Tr_{k,+}(x), Tr_{k,-}(x))$ for the two components, and let $TL_\infty^+$ (resp. $TL_\infty^-$) be the formal sum of all Temperley-Lieb diagrams whose distinguished interval borders an unshaded (resp. shaded) region. Then $Tr_{k,\pm}$ are equivalently defined using the same tangle as $Tr_k$ but replacing $TL_{\infty}$ with $TL_{\infty}^+$ or $TL_\infty^-$.

We extend the inner product from property (3) to all of $Gr_0\mc{P}$ with the convention that $\mc{P}_{n,\epsilon}$ is orthogonal to $\mc{P}_{m,\mu}$ when $(n,\epsilon)\neq (m,\mu)\in \N\times \{+,-\}$. Then $Tr_{0,\pm}(x)=\<TL_\infty^{\pm}, x\>$. More generally, if $\phi_{\pm}\colon Gr_0^{\pm}\mc{P}\to \C$ are linear functionals and $\phi_0=\phi_+\oplus \phi_-$  then there exists an element $f\in Gr_0[[\mc{P}]]$ so that $\phi_0(x)=\<f^*, x\>$. Hence we can define $\phi_k \colon Gr_k\mc{P}\to \C^2$ for each $k$ via (\ref{duality}). We also note that if $\phi_0$ is positive then $f=f^*$.


\subsection{Planar algebra of a bipartite graph}

For a more thorough treatment of the following section, please see Sections 2 and 4 of \cite{GJS10} (specifically subsections 2.4, 2.5, 4.1, 4.2, and 4.3).

Let $\Gamma=(V,E)$ be an oriented bipartite graph with positive vertices $V_+\subset V$ and negative vertices $V_-=V\setminus V_+$. Given an edge $e\in E$, we let $s(e),t(e)\in E$ denote its beginning and ending vertex, respectively, and let $e^\circ$ denote the edge with the opposite orientation (i.e. $s(e^\circ)=t(e)$ and $t(e^\circ)=s(e)$). Then $E_+=\{e\in E\colon s(e)\in V_+\}$ is the set of edges starting on a positive vertex, and $E_-=\{e\in E\colon s(e)\in V_-\}= \{e^\circ \colon e\in E_+\}$.

Let $L$ denote the set of loops in $\Gamma$ where a loop traveling along edges $e_1,e_2,\ldots, e_n$ (in that order) is written as $e_1e_2\cdots e_n$. Since $\Gamma$ is bipartite, any loop will consist of an even number of edges and so we let $L_n$ for $n\geq 0$ denote the loops of length $2n$ (with $L_0=V$). We further sort the loops according to whether they start with a positive or negative vertex and denote these by $L_{n,+}$ and $L_{n,-}$, respectively. Then for each $n\geq 0$, we consider the vector space $\mc{P}_{n,+}^\Gamma$ (resp. $\mc{P}_{n,-}^\Gamma$) of bounded functions on $L_{n,+}$ (resp. $L_{m,-}$).

When $|E|<\infty$ (and consequently $|L_{m,\pm}|<\infty$ for each $n$), the vector spaces $\mc{P}_{n,\pm}^\Gamma$ are finite dimensional and spanned by the delta functions supported on individual loops in $L_{n,\pm}$. Letting $u\in L_{n,\pm}$ serve as notation for both the loop and the delta function supported on said loop, we write
\begin{align*}
w=\sum_{u\in L_{n,\pm}} \beta_w(u) u
\end{align*}
for elements $w\in \mc{P}_{n,\pm}^\Gamma$, where $\beta_w(u)\in \C$.

We define the following involution on $\mc{P}_{n,\pm}^\Gamma$:
\begin{align*}
w^*:=\sum_{u\in L_{n,\pm}} \overline{\beta_w(u)} u^{op},
\end{align*}
where $u^{op}=e_n^\circ\cdots e_1^\circ$ when $u=e_1\cdots e_n$.

Let $A_\Gamma$ be the adjacency matrix for the graph $\Gamma$. By the Perron-Frobenius theorem, $A_\Gamma$ has a unique largest eigenvalue $\delta>0$ with eigenvector $\mu$ satisfying $\mu(v)>0$ for all $v\in V$. We note that the eigenvalue condition $A_\Gamma \mu=\delta \mu$ guarantees $\frac{\mu(v)}{\mu(w)}<\delta$ for all adjacent vertices $v,w\in V$.

The map $Z_T$ associated to a planar tangle is defined as follows. Replace $T$ with an isotopically equivalent tangle whose input and output discs are rectangles with boundary points along the top edges and distinguished interval forming the side and bottom edges. Assume that $D_0$ and $D_1,\ldots, D_r$ are the input and output discs, respectively, that $D_j$ has $2k_j$ boundary points, and that the distinguished interval of $D_j$ has the shading $\epsilon_j\in \{+,-\}$, $0\leq j\leq r$. Let $u_j\in L_{k_j,\epsilon_j}$ for each $j$, and assign each edge in $u_j$ to a boundary point on $D_j$. The edges are assigned in order with the leftmost boundary point corresponding to the first edge and the rightmost boundary point corresponding to the last edge. We set $Z_T(u_1,\ldots, u_r)\equiv 0$ unless every boundary point, say corresponding to an edge $e$, is connected to a boundary point of $D_0$ or is connected to a boundary point of another input disc corresponding to the edge $e^\circ$. When the latter holds, each string is labeled by a single edge (and its opposite) and consequently the regions in $D_0\setminus (D_1\cup\cdots\cup D_r\cup\{\text{strings}\})$ can be labeled by vertices: traversing the regions adjacent to $D_j$ clockwise corresponds to traveling along the vertices in the loop $u_j$. In this case, $Z_T(u_1,\ldots, u_r)$ is supported on the loop $f_1\cdots f_{2k_0}$, where $f_l=e$ if the $l$th boundary point of $D_0$ is connected to the boundary point of an input disc corresponding to the edge $e$. The value of this function is
	\begin{align*}
		[Z_T(u_1,\ldots, u_r)](f_1\cdots f_{2k_0}) =\delta^p \prod_{\gamma\in\{\text{strings in $T$}\}} \left(\frac{\mu(t(e_\gamma))}{\mu(s(e_\gamma))}\right)^{-\frac{\theta_\gamma}{2\pi}},
	\end{align*}
where $p$ is the number of closed loops in $T$, $e_\gamma$ is the edge corresponding to the boundary point at the start of the string $\gamma$, and $\theta_\gamma$ is the total winding angle of the string $\gamma$ (counter-clockwise being the direction of positive angles). We then multilinearly extend to $Z_T$ to $P_{k_1,\epsilon_1}\times\cdots\times P_{k_r,\epsilon_r}$.

When the output disc has zero boundary points there is one region of $D_0\setminus (D_1\cup\cdots\cup D_r\cup\{\text{strings}\})$ bordered by the boundary of of $D_0$. If the above procedure labels this region $v_0\in V$, then $Z_T(u_1,\ldots, u_r)$ is supported on $v_0$ with the same value as above.

We have the following fact originally due to Jones (\emph{cf.} \cite{J00}, \cite{JP10}, and \cite{MW10}):

\begin{prop}
Let $\mc{P}$ be a subfactor planar algebra. Then there exists a bipartite graph $\Gamma$ and a planar algebra embedding $i\colon \mc{P}\to \mc{P}^\Gamma$.
\end{prop}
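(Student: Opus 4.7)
The plan is to take $\Gamma$ to be the principal graph of $\mc{P}$ and realize $\mc{P}_{n,\pm}$ via matrix units indexed by loops on $\Gamma$. Axioms (1) and (3) guarantee that each $\mc{P}_{n,\pm}$, with its natural multiplication and involution, is a finite-dimensional $C^*$-algebra, hence splits by Artin--Wedderburn as a direct sum of matrix algebras $\bigoplus_V M_{d_V(n,\pm)}(\C)$. The inclusions $\mc{P}_{n,\pm}\hookrightarrow \mc{P}_{n+1,\mp}$ obtained by adjoining a string on the right are unital $*$-homomorphisms, and their Bratteli diagrams interlock into a single pointed bipartite graph $\Gamma$: the vertices $V_\pm$ are the stable equivalence classes of simple summands in $\mc{P}_{n,\pm}$, the edges record Bratteli multiplicities, and the base vertex $*\in V_+$ corresponds to the unique minimal projection of $\mc{P}_{0,+}\cong \C$. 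Axiom (4), the Markov property, forces $\delta$ to be the Perron--Frobenius eigenvalue of $A_\Gamma$, with Perron--Frobenius eigenvector $\mu$ recording the quantum dimensions.

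I would then construct $i\colon \mc{P}_{n,\pm}\to \mc{P}^\Gamma_{n,\pm}$ by sending each matrix unit $e_{p,q}$ --- where $p$ and $q$ are length-$n$ paths in $\Gamma$ from $*$ to a common endpoint $v$ --- to a scalar multiple of the delta function on the loop $p\cdot q^{\mathrm{op}}\in L_{n,\pm}$, with normalization involving $\mu(v)/\mu(*)$ chosen so that the $C^*$-structure is preserved. Injectivity of $i$ is then immediate: pulling back the natural inner product on $\mc{P}^\Gamma$ through $i$ recovers the positive-definite sesquilinear form of axiom (3), so $\Ker i = 0$.

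The main obstacle is verifying that $i$ intertwines the action of every planar tangle. By the naturality axiom it suffices to check this on a generating family --- horizontal and vertical concatenation, the cap and cup tangles, the Jones projections, the rotation, and the adjoint. For each such tangle $T$ one compares the explicit formula for $Z_T$ on $\mc{P}^\Gamma$ --- which weights each string by a power of $\mu(t(e))/\mu(s(e))$ dictated by its winding angle and multiplies by $\delta^p$ for closed loops --- with the corresponding operation on $\mc{P}$ expressed in the matrix-unit basis. The winding-angle weights are precisely the quantum-dimension rescalings forced on $\mc{P}$ by the Markov condition, so the two sides agree on each generator; naturality then extends the intertwining property to all planar tangles, completing the proof that $i$ is a planar algebra embedding.
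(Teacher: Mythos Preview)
The paper does not prove this proposition; it records it as a known result and cites \cite{J00}, \cite{JP10}, and \cite{MW10}. Your outline follows exactly the strategy of those references --- take $\Gamma$ to be the principal graph, use Artin--Wedderburn on each $\mc{P}_{n,\pm}$, identify matrix units $e_{p,q}$ with pairs of paths from the base vertex to a common endpoint, and send them to normalized delta functions on the loops $p\cdot q^{op}$ --- so in that sense your plan is the ``same'' approach as the one the paper defers to.

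That said, as a proof your sketch stops short of the actual content. Two points in particular are thinner than they look. First, the normalization: the scalar in front of the delta function on $p\cdot q^{op}$ is not a single factor $\mu(v)/\mu(*)$ but involves fourth roots of the Perron--Frobenius data along the path, and getting it right is precisely what makes the rotation tangle work; a wrong choice will intertwine multiplication and caps but fail for the one-click rotation. Second, your final paragraph asserts that the winding-angle weights in $Z_T^{\mc{P}^\Gamma}$ ``are precisely the quantum-dimension rescalings forced on $\mc{P}$ by the Markov condition,'' but this is the theorem, not an observation: in \cite{JP10} the verification on a generating set of tangles occupies several pages of computation, and the generating set one actually uses (annular tangles plus multiplication, or equivalently the Jones relations) is chosen carefully to make the check tractable. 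What you have written is a correct and well-informed blueprint, but it is the table of contents of the proof rather than the proof itself.
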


A subfactor planar algebra is of \emph{finite depth} if its associated Bratteli diagram has finite width. From this Bratteli diagram one constructs the \emph{principal graph} for the subfactor planar algebra, which plays the role of $\Gamma$ in the above proposition (\emph{cf.} \cite{J00} and \cite{JP10}). In particular, if $\mc{P}$ is of finite depth then $\Gamma$ can be taken to be a finite graph.

For the remainder of the paper we fix a finite depth subfactor planar algebra $\mc{P}$, along with finite bipartite graph $\Gamma$ and inclusion $i\colon \mc{P}\to \mc{P}^\Gamma$. We will use the notations $\<b,a\>_{\mc{P}}$ or $\<b,a\>_{\mc{P}^\Gamma}$ to distinguish between the pairings
\[
	\begin{tikzpicture}[thick, scale=.5]
	\draw[Box] (0.5,0.5) rectangle (1.5,1.5);
	\node at (1,1) {$b^*$};
	\node[marked, scale=.8, above left] at (0.5,1.5) {};
	\draw[thickline] (1,1.5) arc(180:0:0.75);
	\draw[Box] (2,0.5) rectangle (3,1.5);
	\node at (2.5,1) {$a$};
	\node[marked, scale=.8, above left] at (2,1.5) {};
	\end{tikzpicture}
\]
occurring in $\mc{P}$ or $\mc{P}^\Gamma$.

Define the maps $\{Tr_k\}_{k\geq 0}$ for both $\mc{P}$ and $\mc{P}^\Gamma$ as above. As a planar algebra embedding, $i$ preserves the actions of tangles. Hence $Tr_k\circ i(x)= Tr_k(x)$ for all $x\in Gr_k\mc{P}$ and all $k\geq 0$. However, the $0$-box space of $\mc{P}^\Gamma$ is $\ell^\infty(V)$, so $Tr_k\circ i(x)$ is a function on $V$ satisfying
	\begin{equation}\label{trace_function_on_vertices}
		[Tr_k\circ i(x)](v) =\left\{\begin{array}{cc}
							Tr_{k,+}(x) & \text{if }v\in V_+\\
							Tr_{k,-}(x) & \text{if }v\in V_- \end{array}\right..
	\end{equation}
With this in mind we extend $i$ to an embedding $i\colon Gr_k\mc{P}\to Gr_k\mc{P}^\Gamma$. As the $*$-algebra structure of $Gr_k\mc{P}$ was defined using planar tangles, $i$ is a $*$-algebra embedding.


\subsection{The Guionnet-Jones-Shlyakhtenko construction}\label{GJS_construction}
We let $H$ denote the complex Hilbert space with the edges $E$ of $\Gamma$ as an orthogonal basis and norms defined by
\begin{align*}
\|e\|^2=\left[\frac{\mu(s(e))}{\mu(t(e))}\right]^\frac{1}{2},
\end{align*}
and use the notation
\begin{equation*}
\sigma(e)=\left[\frac{\mu(t(e))}{\mu(s(e))}\right]^{\frac{1}{2}} = \|e\|^{-2}.
\end{equation*}
We define left and right actions of $\ell^\infty(V)$ on $H$ by
\begin{align*}
v\cdot e\cdot v'= \delta_{v=s(e)}\delta_{v'=t(e)} e,
\end{align*}
where $v$ denotes both the vertex and the delta function supported at that vertex. Thus $H$ is an $\ell^\infty(V)$-bimodule. We define an $\ell^\infty(V)$-valued inner product by
\begin{align*}
\<e,f\>_{\ell^\infty(V)} = \<e,f\>t(e) = \<e,f\>t(f).
\end{align*}

Let
	\begin{align*}
		\mc{F}_{\ell^\infty(V)}=\ell^\infty(V)\oplus \bigoplus_{n\geq 1} \H^{\otimes_{\ell^\infty(V)} n},
	\end{align*}
and observe that because the tensor product is relative to $\ell^\infty(V)$, non-zero elements $e_1\otimes\cdots\otimes e_n\in\mc{F}_{\ell^\infty(V)}$ correspond to paths $e_1\cdots e_n$ in $\Gamma$. Indeed:
\begin{align*}
e\otimes f= (e\cdot t(e))\otimes f = e\otimes (t(e)\cdot f) = \delta_{t(e)=s(f)} e\otimes f.
\end{align*}
For each $e\in E$ we define $\ell(e)\in \mc{B}(\mc{F}_{\ell^\infty(V)})$ by
\begin{align*}
\ell(e)& v = \delta_{t(e)=v} e\\
\ell(e)& e_1\otimes\cdots \otimes e_n= e\otimes e_1\otimes \cdots \otimes e_n,
\end{align*}
and then its adjoint is given by
\begin{align*}
\ell(e)^*& v= 0\\
\ell(e)^*& e_1\otimes \cdots \otimes e_n = \<e,e_1\>_{\ell^\infty(V)} e_2\otimes\cdots \otimes e_n.
\end{align*}
Notice that in the above formula $\<e,e_1\>_{\ell^\infty(V)} = \<e,e_1\> t(e_1)$ and that $t(e_1) e_2=e_2$ if this element is a path. The norm of this operator is given by
\begin{align*}
\| \ell(e)\| = \|\ell(e)^*\ell(e)\|^\frac{1}{2}= \|e\|.
\end{align*}

For each $e\in E$ we define the non-commutative random variable
\begin{align*}
c(e)=\ell(e)+\ell(e^\circ)^* \in \mc{B}(\mc{F}_{\ell^\infty(V)}),
\end{align*}
and consider the conditional expectation $\mc{E}\colon \mc{B}(\mc{F}_{\ell^\infty(V)})\to \ell^\infty(V)$ given by
\begin{align*}
\mc{E}(x) = \<1_{\ell^\infty(V)}, x1_{\ell^\infty(V)}\>_{\ell^\infty(V)},
\end{align*}
where $1_{\ell^\infty(V)}=\sum_{v\in V} v$ is the multiplicative identity in $\ell^\infty(V)$.

It is known that $(Gr_0^+ \mc{P}^\Gamma,Tr_0)$ embeds via
\begin{align*}
e_1\cdots e_{2n}\mapsto c(e_1)\cdots c(e_{2n})
\end{align*}
into the von Neumann algebra $(W^*(c(e)\colon e\in E_+), \mc{E})$ in a trace-preserving manner (\textit{cf.} Theorem 3 in \cite{GJS10}). In fact, all of $Gr_0\mc{P}^\Gamma$ embeds into $W^*(c(e)\colon e\in E)$ in a trace-preserving manner. Denote $\M:=W^*(c(e)\colon e\in E)$.

For each $v\in V$, we can define a state $\phi_v=\delta_v\circ \mc{E}$ and a weight
\begin{align*}
\phi=\sum_{v\in V} \phi_v.
\end{align*}
Then for $x\in Gr_0\mc{P}$, using (\ref{trace_function_on_vertices}) we see that
\begin{align*}
\phi\circ c\circ i(x) = |V_+|Tr_{0,+}(x) + |V_-|Tr_{0,-}(x) = \<|V_+| TL_\infty^+ + |V_-|TL_\infty^-, x\>.
\end{align*}
Consequently we define $\overline{TL}_\infty:=|V_+| TL_\infty^+ + |V_-|TL_\infty^-\in Gr_0[[\mc{P}]]$ and $\overline{Tr}_0(x)=\<\overline{TL}_\infty, x\>$ so that
\begin{align}\label{trace_of_subfactor_element}
\overline{Tr}_0(x)= \phi\circ c\circ i(x).
\end{align}
Consider the Fock space
\begin{align*}
\mc{F}=\C\Omega\oplus\bigoplus_{n\geq 1} \H^{\otimes n}
\end{align*}
(ignoring the $\ell^\infty(V)$-bimodule structure of $H$). Let $\varphi$ be the vacuum state on $\mc{B}(\mc{F})$. For each $e\in E$ we define $\hat{\ell}(e)\in \mc{B}(\mc{F})$ as above and let $\hat{c}(e)=\hat{\ell}(e)+\hat{\ell}(e^\circ)^*$. Extending $\hat{c}$ to loops by $e_1\cdots e_{2n}\mapsto \hat{c}(e_1)\cdots \hat{c}(e_{2n})$, it follows that $\phi\circ c=\varphi\circ \hat{c}$. Indeed, the GNS vector space associated to $\phi_v$ is isomorphic to the subspace of $\mc{F}$ spanned by elements of the form $e_1\otimes\cdots\otimes e_{2n}$ where $e_1\cdots e_{2n}\in L$ and $s(e_1)=t(e_{2n})=v$. Consequently,
\begin{align*}
\phi_v(c(e_1\cdots e_{2n}))=\varphi(\hat{c}(e_1\cdots e_{2n}).
\end{align*}
Since this holds for each $v$, $\phi(c(x))=\varphi(\hat{c}(x))$ by summing over the support of $x$ according to which vertex it starts at. Consequently, using (\ref{trace_of_subfactor_element}) we have
\begin{align}\label{trace_relation}
\overline{Tr}_0(x)= \varphi(\hat{c}\circ i(x))\qquad x\in Gr_0\mc{P}.
\end{align}
From now on, we will repress the embedding notation $i$ and consider $Gr_0\mc{P}$ as a subalgebra of $Gr_0\mc{P}^\Gamma$, although the traces of such elements will still be thought of as scalars so that (\ref{trace_relation}) makes sense.

We will use the notation $C_e=\hat{c}(e)$ for $e\in E$, and $M=W^*(C_e\colon e\in E)\subset \mc{B}(\mc{F})$. It turns out $M$ is a free Araki-Woods factor, which we demonstrate below, and thus this embedding lies in the scope of the transport results obtained in \cite{N13}.


\section{Free Araki-Woods Algebras}

Each $C_e$ is a generalized circular element (\textit{cf.} \cite{S97}). Indeed, let $h=e/\|e\|$ and $g=e^\circ/\|e^\circ\|$ be normalized opposite edges. Then
\begin{align*}
C_e=\|e\| \hat{\ell}(h) + \|e\|^{-1} \hat{\ell}(g)^* = \|e\|( \hat{\ell}(h)+\sigma(e)\hat{\ell}(g)^*),
\end{align*}
so letting $\lambda(e)=\sigma(e)^2=\|e\|^{-4}$ we see that $C_e/\|e\|$ is a generalized circular element of precisely the form discussed in \cite{S97}. Consequently the $C_e$ will be linearly related to certain semicircular random variables, and the von Neumann algebra they generate will be a free Araki-Woods factor. We describe these semicircular elements presently. For $e\in E$ define
\begin{align*}
u(e)=\left\{\begin{array}{cl}	\frac{1}{\sqrt{\sigma(e)+\sigma(e^\circ)}} (e+e^\circ)	&	\text{if }e\in E_+\\
\frac{i}{\sqrt{\sigma(e)+\sigma(e^\circ)}} (e-e^\circ)	&	\text{if }e\in E_-\end{array}\right.,
\end{align*}
so that $u(e), u(e^\circ)$ are unit vectors. For each $e\in E$ let $X_e=\hat{\ell}(u(e))+\hat{\ell}(u(e))^*$, then it is easy to check that for $e\in E_+$
\begin{align}\label{linear_relation0}
	C_e &= \frac{\sqrt{\sigma(e)+\sigma(e^\circ)}}{2}\left(X_e-i X_{e^\circ}\right),\text{ and}\\
	C_{e^\circ} &= \frac{\sqrt{\sigma(e)+\sigma(e^\circ)}}{2}\left(X_e + i X_{e^\circ}\right).\nonumber
\end{align}
For each pair $e,f\in E$ let $\alpha_{ef}=\varphi(X_fX_e)=\<u(f),u(e)\>$. Then take $A\in M_{|E|}(\C)$ to be the matrix defined by $\left[\frac{2}{1+A}\right]_{ef}=\alpha_{ef}$. It follows that $A$ is a block-diagonal matrix in the sense that $[A]_{ef}=0$ unless $f\in \{e,e^\circ\}$. As this will be the case for many of the matrices considered in this paper, we adopt the following notation for $B\in M_{|E|}(\C)$ and $e\in E_+$:
\begin{align*}
B(e):=\left(\begin{array}{cc} [B]_{ee} & [B]_{ee^\circ}\\ {[B]_{e^\circ e}} & [B]_{e^\circ e^\circ}\end{array}\right) \in M_2(\C).
\end{align*}
In particular, we have
\begin{align*}
A(e)=
\left(
\begin{array}{cc}
\frac{1}{2}\left(\lambda(e)+\lambda(e)^{-1}\right)	&	-\frac{i}{2}\left(\lambda(e)-\lambda(e)^{-1}\right) \\
\frac{i}{2}\left(\lambda(e)-\lambda(e)^{-1}\right)	&	\frac{1}{2}\left(\lambda(e)+\lambda(e)^{-1}\right)
\end{array}
\right).
\end{align*}
Moreover, $A$ is positive with $\text{spectrum}(A)=\{\lambda(e)\}_{e\in E}$ and consequently,
\begin{align}\label{norm_of_A}
\|A\| = \max_{e\in E} \lambda(e) =\max_{e\in E} \frac{\mu(t(e))}{\mu(s(e))}<\delta.
\end{align}
Setting $U_t=A^{it}$ for $t\in \R$ gives a one-parameter orthogonal group with $[U_t]_{ef}=0$ when $f\not\in \{e,e^\circ\}$ and
\begin{align*}
U_t(e)=
\left(
\begin{array}{ll}
\cos(t\log{\lambda(e)})	&	-\sin(t\log{\lambda(e)})\\
\sin(t\log{\lambda(e)})	&	\cos(t\log{\lambda(e)})
\end{array}
\right)\qquad e\in E_+.
\end{align*}
It follows that $H$ is isomorphic to the closure of $\C^{|E|}$ with respect to the inner product
\begin{align*}
\<x,y\>_U=\<\frac{2}{1+A^{-1}}x,y\>\qquad x,y\in \C^{|E|},
\end{align*}
and this isomorphism is implemented by sending the standard basis of $\C^{|E|}$ to $\{u(e)\}_{e\in E}$ in the obvious way. Moreover, $M=W^*(C_e\colon e\in E)=W^*(X_e\colon e\in E)\cong \Gamma(\R^{|E|}, U_t)''$, where the latter von Neumann algebra is a free Araki-Woods factor.


\subsection{The differential operators}

Since $M$ is a free Araki-Woods factor, all the machinery developed in \cite{N13} carries over and we proceed by translating it to the context of the generalized circular system $C=(C_e\colon e\in E)$. Let $X=(X_e\colon e\in E)$, then the linear relation in (\ref{linear_relation0}) can be stated succinctly as
\begin{align}\label{linear_relation}
C=UX,
\end{align}
where $U$ is the matrix with $[U]_{ef}=0$ for $f\not\in \{e,e^\circ\}$ and
\begin{align*}
U(e)=\frac{\sqrt{\sigma(e)+\sigma(e^\circ)}}{2} \left(\begin{array}{cc} 1 & -i\\ 1 & i\end{array}\right).
\end{align*}
Because of this linear relation, if we denote $\P=\C\<X_e\colon e\in E\>$ then these can be thought of as non-commutative polynomials in \emph{either} the $X_e$ or in the $C_e$. As elements of $\P$, the distinction is trivial; however, for the purposes of composition with elements of $\P^{|E|}$ it is necessary to indicate whether an element is being thought of as a function on the $C_e$ or the $X_e$.\par

Let $\{\delta_e\}_{e\in E}$ be the free difference quotients defined on $\P$ by $\delta_e(X_f)=\delta_{e=f}1\otimes 1$ and the Leibniz rule. We use the same conventions on $\P\otimes \P^{op}$ as those in \cite{N13}. The $\sigma$-difference quotients of \cite{N13} are given by
	\begin{align*}
		\partial_{u(e)}=\sum_{f\in E}\alpha_{fe}\delta_{f},
	\end{align*}
and these generate a new collection of derivations $\{\partial_e\}_{e\in E}$ via the linear relation in (\ref{linear_relation}):
	\begin{align*}
		\partial_e=[U]_{ee}\partial_{u(e)} + [U]_{e e^\circ}\partial_{u(e^\circ)}.
	\end{align*}
These can also be independently defined on $\P$ by $\partial_e(C_f)=\delta_{f=e^\circ}\sigma(e) 1\otimes 1$ and the Leibniz rule. We shall refer to the derivations $\{\partial_e\}_{e\in E}$ as \emph{c-difference quotients}.\par

For $Q\in \P^{|E|}$ we define $\J_c Q \in M_{|E|}(\P\otimes\P^{op})$ by $[\J_c Q]_{ef}=\partial_f Q_e$. In particular,
\begin{align}\label{derivative_of_C}
(\J_c C)(e) = \left(
\begin{array}{cc}
0	&	\sigma(e^\circ)1\otimes 1\\
\sigma(e)1\otimes 1	&	0
\end{array}
\right)\qquad e\in E_+.
\end{align}
Letting $\J_\sigma$ be the operator considered in \cite{N13} we have $[\J_\sigma Q]_{ef}=\partial_{u(f)}Q_e$ and
\begin{align}\label{linear_relation_derivative1}
\J_c Q=\J_\sigma Q\# U^{T}.
\end{align}
Using this and (\ref{linear_relation}) we see that
	\begin{align}\label{linear_relation2}
		\J_c C = U \# \J_\sigma X \# U^T = U\# \frac{2}{1+A} \# U^T,			
	\end{align}
and after noting that $\J_c C^{-1}= \J_c C$ we also have
	\begin{align}\label{linear_relation3}
		\frac{1+A}{2}=\J_\sigma X^{-1} = U^T\# \J_c C\# U.
	\end{align}

Let $\{\D_{u(e)}\}_{e\in E}$ be the $\sigma$-cyclic derivatives of \cite{N13}:
	\begin{align*}
		\D_{u(e)}(X_{e_1}\cdots X_{e_n})=\sum_{k=1}^n \alpha_{e e_k} \sigma_{-i}(X_{e_{k+1}}\cdots X_{e_n}) X_{e_1}\cdots X_{e_{k-1}},
	\end{align*}
and for $Q\in \P$ we let $\D Q$ be the $\sigma$-cyclic gradient of $Q$: $\D Q=(\D_{u(e)} Q\colon e\in E)$. We then define the \emph{c-cyclic derivatives} $\D_e = [U]_{ee}\D_{u(e)}+[U]_{ee^\circ}\D_{u(e^\circ)}$ for each $e\in E$. That is,
	\begin{align*}
		\D_e(C_{e_1}\cdots C_{e_n})&=\sigma(e^\circ)\sum_{k=1}^n \delta_{e_k=e^\circ} \sigma_{-i}^\varphi(C_{e_{k+1}}\cdots C_{e_n}) C_{e_1}\cdots C_{e_{k-1}}\\
			&=\sigma(e^\circ) \sum_{k=1}^n \delta_{e_k=e^\circ} \left(\prod_{l=k+1}^n \sigma(e_l)^2\right)C_{e_{k+1}}\cdots C_{e_n}C_{e_1}\cdots C_{e_{k-1}},
	\end{align*}
where we have used the action of the modular automorphism group $\varphi_t^\varphi$ on $C_e$ discussed in Lemma 5.(ii) of \cite{GJS10}. For $Q\in \P$ we define $\D_c Q=(\D_e Q\colon e\in E)$ as the \emph{c-cyclic gradient}. It then follows that
	\begin{align}\label{linear_relation_derivative2}
		\D_c Q=U \#\D Q.
	\end{align}
It is clear that the $c$-difference quotients and $c$-cyclic derivatives induce derivations on $Gr_0 \mc{P}^\Gamma$ through $\hat{c}$, and we denote these by $\partial_e$ and $\D_e$ as well. Suppose $eu$ is a loop (so that $u$ is a path from $t(e)$ to $s(e)$). Then $\partial_e u$ is zero unless $e^\circ$ is one of the edges traversed by $u$ in which case $\partial_e u$ is a tensor product $u_{\ell}\otimes u_r$ of two loops such that $u_{\ell}$ starts at $t(e)$ and $u_r$ starts at $s(e)$. If $u$ itself is a loop, then $\D_e u$ is zero unless $e^\circ$ is traversed by $u$ in which case $\D_e u$ is path starting at $s(e)$ and ending at $t(e)$.

We next encode the action of these differential operators on $Gr_0\mc{P}$ via planar tangles.

\begin{lem}\label{composing_cyclic_derivative}
For $g\in Gr_0\mc{P}$, $x\in \mc{P}_{n,\pm}$, and $1\leq i \leq 2n$, consider the tangle
\[
\begin{tikzpicture}[thick, scale=.5]
\draw[Box] (0, 3) rectangle (2,4);
\node at (1,3.5) {$g$};
\node[marked, scale=.8, above left] at (0,4) {};
\draw[thickline] (0.5, 4) --++(0,1.5);
\draw[thick] (1, 4) arc(180:0:1) --+(0,-1) arc (0:-90: 1) arc(90:180:0.5);
\draw[thickline] (1.5,4) arc(180:0:.5) --++(0,-1) arc(0:-90:.5cm) --++(-2,0) arc(-90:-180:.5cm) --++(0,2.5);

\draw[Box] (-2,0.5) rectangle (4,1.5);
\node at (1,1) {$x$};
\node[marked, scale=.8, above left] at (-2,1.5) {};
\draw[thickline] (-1.4,1.5) --++ (0,4);
\draw[thickline] (3.5,1.5) --++ (0,4);
\node[right] at (4,0.8) {,};
\end{tikzpicture}
\]
where the $i$th boundary point of $x$ is connected with $g$ and we sum over all choices of boundary points of $g$. Then the image of the output of this tangle under $\hat{c}$ is the same as $\hat{c}(x)$ except with each monomial $C_{e_1}\cdots C_{e_{2n}}$ changed to $C_{e_1}\cdots (\D_{e_i} \hat{c}(g))\cdots C_{e_{2n}}$.
\end{lem}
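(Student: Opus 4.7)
The plan is to verify the identity after applying the planar algebra embedding $i\colon \mc{P}\to \mc{P}^\Gamma$, where everything reduces to an explicit computation on loops. Since both the tangle on the left-hand side and the derivation formula on the right-hand side are multilinear in $g$ and $x$, I would first decompose $i(g)=\sum \beta_g(u)u$ and $i(x)=\sum \beta_x(v)v$ into the loop basis of $\mc{P}^\Gamma$, reducing to the case where $g=u=f_1\cdots f_{2m}$ and $x=v=e_1\cdots e_{2n}$ are single loops. Under $\hat{c}$ these become the monomials $C_{f_1}\cdots C_{f_{2m}}$ and $C_{e_1}\cdots C_{e_{2n}}$ respectively, so after this reduction one only needs to match the two single-monomial expressions.

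Next I would expand the right-hand side using the explicit formula for the c-cyclic derivative, namely
\[
\D_{e_i}\hat{c}(g) = \sigma(e_i^\circ)\sum_{j:\, f_j=e_i^\circ}\left(\prod_{l=j+1}^{2m}\sigma(f_l)^2\right)C_{f_{j+1}}\cdots C_{f_{2m}}C_{f_1}\cdots C_{f_{j-1}},
\]
and insert this in slot $i$ of $C_{e_1}\cdots C_{e_{2n}}$. For the left-hand side I would apply the planar tangle formula from the graph planar algebra: pick each $j\in\{1,\ldots,2m\}$ for the boundary point of $g$ that is glued to the $i$th boundary of $x$, observe that compatibility of orientations on the connecting string forces $f_j=e_i^\circ$ (otherwise the term vanishes), and then read off the output loop by traversing the external region clockwise. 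The routing in the tangle, with the strings indexed $1,\ldots,j-1$ going straight up, the single string $j$ plunging down into $x$, and the strings $j+1,\ldots,2m$ sweeping around behind $g$, produces exactly the cyclically shifted order $e_1,\ldots,e_{i-1},f_{j+1},\ldots,f_{2m},f_1,\ldots,f_{j-1},e_{i+1},\ldots,e_{2n}$ that appears on the right-hand side. Applying $\hat{c}$ then converts the output loop to the corresponding monomial in $C$'s and finishes the term-by-term matching.

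The main obstacle is verifying that the scalar coefficients agree, and this is where the bulk of the work goes. By the graph planar algebra formula, each string $\gamma$ in $T$ contributes the factor $(\mu(t(e_\gamma))/\mu(s(e_\gamma)))^{-\theta_\gamma/2\pi}$ where $\theta_\gamma$ is its total winding angle. The straight-up strings on the left and right of $x$ have $\theta_\gamma=0$ and contribute nothing. Each of the $2m-j$ strings in the bundle that wraps around $g$ completes a full clockwise turn, so $\theta_\gamma=-2\pi$ and each contributes a factor $\mu(t(f_l))/\mu(s(f_l))=\sigma(f_l)^2$ for $l=j+1,\ldots,2m$, accounting for the modular product appearing in $\D_{e_i}$. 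The single string linking the $j$th boundary of $g$ to the $i$th boundary of $x$ winds by $\pi$, yielding exactly the factor $\sigma(e_i^\circ)=(\mu(t(e_i^\circ))/\mu(s(e_i^\circ)))^{1/2}$.

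Once the scalars are matched, summing over $j$ (equivalently, over the set $\{j:f_j=e_i^\circ\}$, since the other terms vanished) gives precisely the expansion of $C_{e_1}\cdots(\D_{e_i}\hat{c}(g))\cdots C_{e_{2n}}$, completing the proof in the monomial case; linearity then yields the general statement for $x\in\mc{P}_{n,\pm}$ and $g\in Gr_0\mc{P}$.
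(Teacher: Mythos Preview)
Your proposal is correct and follows essentially the same approach as the paper: reduce to $\mc{P}^\Gamma$ via the embedding, use multilinearity to pass to single loops $u=f_1\cdots f_{2m}$ and $w=e_1\cdots e_{2n}$, evaluate the tangle on these loops, and match the output against the explicit formula for $\D_{e_i}\hat c(u)$. You supply more detail than the paper on the winding-angle bookkeeping (the paper simply asserts the resulting scalar), but the structure of the argument is identical.
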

\begin{proof}
We prove this result for the corresponding tangle on $Gr_0\mc{P}^\Gamma$, so that it then holds via our embedding $Gr_0\mc{P}\hookrightarrow Gr_0\mc{P}^\Gamma$. Suppose $w=e_1\cdots e_{2n}$ and $u=f_1\cdots f_{2m}$, are loops. Then
\[
\begin{tikzpicture}[thick, scale=.5]
\draw[Box] (0, 3) rectangle (2,4);
\node at (1,3.5) {$u$};
\node[marked, scale=.8, above left] at (0,4) {};
\draw[thickline] (0.5, 4) --++(0,1.5);
\draw[thick] (1, 4) arc(180:0:1) --+(0,-1) arc (0:-90: 1) arc(90:180:0.5);
\draw[thickline] (1.5,4) arc(180:0:.5) --++(0,-1) arc(0:-90:.5cm) --++(-2,0) arc(-90:-180:.5cm) --++(0,2.5);

\draw[Box] (-2,0.5) rectangle (4,1.5);
\node at (1,1) {$w$};
\node[marked, scale=.8, above left] at (-2,1.5) {};
\draw[thickline] (-1.4,1.5) --++ (0,4);
\draw[thickline] (3.5,1.5) --++ (0,4);
\node[right] at (4,2) {$\displaystyle =\sum_{j=1}^{2m} \delta_{f_j=e_i^\circ} \sigma(e_i^\circ) e_1\cdots e_{i-1}\left[\sigma(f_{j+1})^2 f_{j+1}\cdots  \sigma(f_{2m})^2 f_{2m}f_1\cdots f_{j-1}\right] e_{i+1}\cdots e_{2n}.$};
\end{tikzpicture}
\]
The image of this under $\hat{c}$ is precisely $C_{e_1}\cdots C_{e_{i-1}} \left[\D_{e_i} \hat{c}(u)\right] C_{e_{i+1}}\cdots C_{e_{2n}}$. Using the multilinearity of this and the tangle with respect to $u$ and $w$, we obtain the result for general $g$ and $x$.
\end{proof}

This lemma tells us that
\[
\begin{tikzpicture}[thick, scale=.5]
\draw[Box] (0, 3) rectangle (2,4);
\node at (1,3.5) {$g$};
\node[marked, scale=.8, above left] at (0,4) {};
\draw[thickline] (0.5, 4) --++(0,1.5);
\draw[thick] (1, 4) arc(180:0:1) --+(0,-1) arc (0:-90: 1) arc(90:180:0.5);
\draw[thickline] (1.5,4) arc(180:0:.5) --++(0,-1) arc(0:-90:.5cm) --++(-2,0) arc(-90:-180:.5cm) --++(0,2.5);

\end{tikzpicture}
\]
can be thought of an $|E|$-tuple whose components are indexed by how we label the bottom string, and whose image under $\hat{c}$ is the $c$-cyclic gradient of $\hat{c}(g)$, $\D_c \hat{c}(g)$. That is, the $|E|$-tuple is $\D_c g$.

Identify $Gr_0\mc{P}^\Gamma$ with a subspace of its dual via the pairings
	\begin{equation}\label{sum_pairing}
		\sum_{v\in V} \left[\<f^*,\ \cdot\ \>_{\mc{P}^\Gamma}\right](v)\colon Gr_0\mc{P}^\Gamma\to\C,\qquad f\in Gr_0[[\mc{P}^\Gamma]].
	\end{equation}
Given a linear functional $\psi$ on $M$, $\psi\circ\hat{c}$ is a linear functional on $Gr_0\mc{P}^\Gamma$ and so by duality there is an element $f\in Gr_0[[\mc{P}^\Gamma]]$ so that
\[
\psi\circ\hat{c}(x)= \sum_{v\in V} \left[\<f^*,x\>_{\mc{P}^\Gamma}\right](v).
\]

\begin{lem}\label{S-D_planar_tangle_RHS}
Given a linear functional $\psi\colon M\to\C$, suppose the element $f\in Gr_0[[\mc{P}^\Gamma]]$ associated to $\psi$ as above belongs to the subspace $Gr_0[[\mc{P}]]$. Then for $x\in Gr_0\mc{P}$ embedding as $\sum_{u\in L} \beta_x(u) u\in Gr_0\mc{P}^\Gamma$ we have
\begin{equation}\label{tensor}
\begin{tikzpicture}[thick, scale=.5, baseline]
\draw[Box] (7.2,0) rectangle (11.8,1);
\node at (9.5,0.5) {$x$};
\node[marked, scale=.8, above left] at (7.2,1) {};
\draw[thick] (7.5,1) --++ (0, 1.5) arc(180:90:0.5) --++ (1.35,0) arc(90:0:0.5) --++ (0,-1.5);
\draw[thickline] (8.7,1) --++ (0,0.5);
\draw[thickline] (11,1) --++ (0,0.5);
\draw[Box] (7.9,1.5) rectangle (9.5, 2.5);
\node at (8.7,2) {$f$};
\node[marked, scale=.8, above left] at (7.9,2.5) {};
\draw[Box] (10.2,1.5) rectangle (11.8,2.5);
\node at (11,2) {$f$};
\node[marked, scale=.8, above left] at (10.2,2.5) {};
\node[right] at (12,0.5) {$\displaystyle = \psi\otimes\psi^{op}\left(\sum_{ eu\in L} \frac{1}{V(e)}\beta_x(eu) \partial_e \hat{c}(u) \right)$, };
\end{tikzpicture}
\end{equation}
where on the left we sum over the choices of the right-most endpoint of the string connecting $x$ to itself, and $V(e)\in \N$ is $|V_+|$ if $e\in E_+$ and $|V_-|$ otherwise.
\end{lem}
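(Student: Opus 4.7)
The plan is to verify the identity by reducing to a single-loop calculation in $\mc{P}^\Gamma$ and matching both sides term by term. By linearity of both sides in $x$, one may assume $x$ embeds in $\mc{P}^\Gamma$ as a single loop $u_0 = e_1 e_2 \cdots e_{2n}$, so that $\beta_x(u) = \delta_{u = u_0}$. The Leibniz rule combined with $\partial_{e_1}(C_f) = \delta_{f=e_1^\circ}\sigma(e_1)\, 1\otimes 1$ immediately reduces the right-hand side to
\[
\frac{\sigma(e_1)}{V(e_1)}\sum_{\substack{2\le k\le 2n\\ e_k = e_1^\circ}}\psi(C_{e_2}\cdots C_{e_{k-1}})\,\psi(C_{e_{k+1}}\cdots C_{e_{2n}}).
\]

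For the left-hand side, I would compute the tangle on $u_0$ directly in $\mc{P}^\Gamma$ via the formula defining $Z_T$. The thin cap connecting the $1$st and $k$th boundary points of $u_0$ vanishes unless $e_k = e_1^\circ$, and a quick inspection of the drawing shows the cap has net winding angle $-\pi$, contributing a factor $(\mu(t(e_1))/\mu(s(e_1)))^{1/2} = \sigma(e_1)$. For each admissible $k$, the remaining strings split the rest of the tangle into two sub-tangles pairing the sub-loops $e_2\cdots e_{k-1}$ (a loop at $t(e_1)$) and $e_{k+1}\cdots e_{2n}$ (a loop at $s(e_1)$) with the two copies of $f$. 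By isotopy invariance these sub-tangles are precisely the $\mc{P}^\Gamma$-inner products $\langle f^*, e_2\cdots e_{k-1}\rangle_{\mc{P}^\Gamma}$ and $\langle f^*, e_{k+1}\cdots e_{2n}\rangle_{\mc{P}^\Gamma}$, each of which is supported at the starting vertex of its sub-loop, so the $\ell^\infty(V)$-valued output of the tangle is supported at the single vertex $s(e_1) \in V_\epsilon$.

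Next I would invoke the hypothesis on $\psi$: for a single loop $y$, the identity $\psi\circ\hat{c}(y) = \sum_v[\langle f^*, y\rangle_{\mc{P}^\Gamma}](v)$ collapses to $\psi(\hat{c}(y)) = [\langle f^*, y\rangle_{\mc{P}^\Gamma}](s(y))$ since the pairing is supported at $s(y)$. Substituting this for each of the two sub-loops converts both inner products into $\psi$-values and reproduces the sum displayed above. The remaining factor $1/V(e_1)$ arises from interpreting the $\ell^\infty(V)$-valued tangle output as the scalar it represents in the subfactor planar algebra $\mc{P}$ (where $\dim\mc{P}_{0,\pm}=1$): since that output is supported at a single vertex of $V_\epsilon$, recovering the corresponding constant-function scalar requires dividing by $|V_\epsilon| = V(e_1)$. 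The general case then follows by linearity. The main obstacle is the combinatorial bookkeeping---identifying the two sub-tangles as the planar-algebra inner products via explicit isotopies, checking that shadings and vertex labels are consistent (so that in particular $|V_\epsilon| = V(e_1)$), and justifying the $V(e_1)$ normalization that bridges the $\ell^\infty(V)$-valued computation in $\mc{P}^\Gamma$ with the scalar identity in $\mc{P}$; the winding-angle and Perron--Frobenius computations are routine once the geometry is straightened out.
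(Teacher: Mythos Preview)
Your proposal is correct and follows essentially the same route as the paper's proof: reduce to a single loop in $\mc{P}^\Gamma$, read off the $\sigma(e_1)$ from the winding of the cap, identify the two sub-tangles as the $\mc{P}^\Gamma$-pairings with $f$, and convert these to $\psi$-values via the duality defining $f$. The only cosmetic difference is that the paper handles the two copies of $f$ in two stages---first showing the half-tangle with only the left $f$ embeds as $(\psi\otimes 1)(\sum\beta_x(eu)\partial_e\hat c(u))$, then capping with the right $f$ and invoking $\langle f^*,a\rangle_{\mc P}=\tfrac{1}{|V_\pm|}\psi(\hat c(a))$---whereas you treat both at once; your justification of the $1/V(e_1)$ factor is exactly this last relation in disguise, and your slight abuse of language in ``assuming $x$ is a single loop'' is harmless once one remembers the computation is really taking place in $\mc{P}^\Gamma$ and then summed.
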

\begin{proof}
We first claim that
\[
\begin{tikzpicture}[thick, scale=.5]
\draw[Box] (7.2,0) rectangle (11.8,1);
\node at (9.5,0.5) {$x$};
\node[marked, scale=.8, above left] at (7.2,1) {};
\draw[thick] (7.5,1) --++ (0, 1.5) arc(180:90:0.5) --++ (1.35,0) arc(90:0:0.5) --++ (0,-1.5);
\draw[thickline] (8.7,1) --++ (0,0.5);
\draw[thickline] (11,1) --++ (0,2);
\draw[Box] (7.9,1.5) rectangle (9.5, 2.5);
\node at (8.7,2) {$f$};
\node[marked, scale=.8, above left] at (7.9,2.5) {};
\end{tikzpicture}
\]
embeds as $(\psi\otimes 1)(\sum_{eu\in L} \beta_x(eu) \partial_e \hat{c}(u))$ under $\hat{c}$. Indeed, let $e_1 u=e_1 e_2\cdots e_{2n}\in L$. Then this tangle evaluated at $e_1u$ instead of $x$ yields
	\begin{align*}
		\sum_{j=2}^{2n} \delta_{e_j=e_1^\circ} \sigma(e_1) [\<f^*, e_2\cdots e_{j-1}\>_{\mc{P}^\Gamma}](t(e_1)) e_{j+1}\cdots e_{2n}.
	\end{align*}
(We note that if $e_j=e_1^\circ$ then $e_2\cdots e_{j_1}$ and $e_{j+1}\cdots e_{2n}$ are indeed loops).

Now, since $\< f^*, e_2\cdots e_{j-1}\>_{\mc{P}^\Gamma}$ is supported only on $t(e_1)=s(e_2)$, we have $[\<f^*,e_2\cdots e_{j-1}\>_{\mc{P}^\Gamma}](t(e_1)) = \psi(\hat{c}(e_2\cdots e_{j_1}))$. Consequently the image of the above expression under $\hat{c}$ is
\begin{align*}
\sum_{j=2}^{2n} \delta_{e_j=e_1^\circ} \sigma(e_1) \psi(\hat{c}(e_2\cdots e_{j-1})) \hat{c}(e_{j+1}\cdots e_{2n}) = (\psi\otimes 1)(\partial_{e_1}\hat{c}(e_2\cdots e_{2n})) = (\psi\otimes 1)(\partial_{e_1} u).
\end{align*}
Summing over general $eu\in L$ yields the claim for $x$.

Now, for $a\in Gr_0^+\mc{P}$ we have that $\<f^*,a\>_{\mc{P}^\Gamma}$ is the function supported on $V_+$ with constant value of $\<f^*,a\>_{\mc{P}}$. Hence $\psi(\hat{c}(a))=|V_+| \<f^*,a\>_{\mc{P}}$, or
\[
\begin{tikzpicture}[thick, scale=.5]
\draw[Box] (0,0) rectangle (2,1);
\node at (1,0.5) {$a$};
\node[marked, scale=.8, above left] at (0,1) {};
\draw[thickline] (1,1) --++ (0,0.5);
\draw[Box] (0,1.5) rectangle (2, 2.5);
\node at (1,2) {$f$};
\node[marked, scale=.8, above left] at (0,2.5) {};
\node[right] at (2,1.25) {$\displaystyle =\frac{1}{|V_+|} \psi(\hat{c}(a)),$};
\end{tikzpicture}
\]
where the planar tangle is occurring in $\mc{P}$. Similarly for $a\in Gr_0^-\mc{P}$. Applying this to the output of the tangle in the first claim yields (\ref{tensor}) once we note that the components of $x$ in $Gr_0^{\pm}\mc{P}$ embed as $\sum_{eu\in L_{\pm}}\beta_x(eu) eu \in Gr_0^{\pm} \mc{P}^\Gamma$, respectively.
\end{proof}

\begin{rem}\label{inner_product_comparison}
The element associated to the free quasi-free state $\varphi$ by (\ref{sum_pairing}) is $TL_\infty$, which we note is distinct from $\overline{TL}_\infty$, the element associated to it via the pairing $\<f^*,\ \cdot\ \>_{\mc{P}}$ on $Gr_0\mc{P}$. This difference is simply a consequence of the relationship between these two pairings for elements of $Gr_0\mc{P}$:
\[
\sum_{v\in V}[\<f^*, x\>_{\mc{P}^\Gamma}](v) = |V_+|\<f^*_+, x_+\>_\mc{P} + |V_-| \<f^*_-, x_-\>_{\mc{P}} \qquad \text{for }f \in Gr_0[[\mc{P}]],\ x\in Gr_0\mc{P}.
\]
\end{rem}


\subsection{Formal power series and Banach algebras}

Recall from \cite{N13} that on $\P$ we considered two Banach norms for an $R>0$. For
\begin{align*}
Q=\sum_{n\geq 0} \sum_{e_1,\ldots, e_n\in E} \beta_Q(e_1,\ldots, e_n) X_{e_1}\cdots X_{e_n},\qquad \beta_Q(e_1,\ldots, e_n)\in \C
\end{align*}
we defined the Banach norm
\begin{align*}
\|Q\|_R := \sum_{n\geq 0} \sum_{e_1,\ldots, e_n\in E} |\beta_Q(e_1,\ldots, e_n)| R^n.
\end{align*}
We denote the closure $\overline{\P}^{\|\cdot\|_R}$ by $\P^{(R)}$. 

\begin{lem}\label{analytically_free}
Let $R>\max_{j} \|X_j\|$ and suppose the coefficients $\beta_Q(e_1,\ldots, e_n)\in \C$, for $n\geq 0$ and $e_1,\ldots, e_n\in E$, satisfy
	\begin{align*}
		\sum_{n\geq 0} \sum_{e_1,\ldots, e_n\in E} |\beta_Q(e_1,\ldots, e_n)| R^n<\infty.
	\end{align*}
Then
	\begin{align*}
		Q:=\sum_{n\geq 0} \sum_{e_1,\ldots, e_n\in E} \beta_Q(e_1,\ldots, e_n) X_{e_1}\cdots X_{e_n}
	\end{align*}
is an element of $M$ and $Q=0$ if and only if every coefficient $\beta_Q(e_1,\ldots, e_n)=0$.
\end{lem}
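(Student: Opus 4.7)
I would prove this in two parts: first the convergence statement, then the $\beta_Q \equiv 0$ characterization of $Q = 0$.

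For convergence, since $R > \max_j \|X_j\|$ and $\|X_{e_1}\cdots X_{e_n}\| \le (\max_j\|X_j\|)^n < R^n$, the hypothesis immediately gives absolute convergence of $\sum |\beta_Q|\,\|X_{e_1}\cdots X_{e_n}\|$ in operator norm, so $Q$ is a well-defined element of $M$. The nontrivial direction of the second claim is to show $Q = 0$ forces every $\beta_Q(e_1,\ldots,e_n)$ to vanish. The plan is to evaluate $Q\Omega \in \mc{F}$ and read off coefficients after passing to a Wick-polynomial basis.

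Recall that on the full Fock space, $X_e = \hat{\ell}(u(e)) + \hat{\ell}(u(e))^*$. Define Wick polynomials $W(e_1,\ldots,e_n) \in \P$ inductively by $W() = 1$ and
\[
W(e_1,\ldots,e_n) = X_{e_1} W(e_2,\ldots,e_n) - \langle u(e_1), u(e_2)\rangle W(e_3,\ldots,e_n),
\]
where the correction term is taken to be zero when $n \le 1$. A direct induction using $\hat{\ell}(u(e_1))^*\,[u(e_2)\otimes\cdots\otimes u(e_n)] = \langle u(e_1),u(e_2)\rangle\,u(e_3)\otimes\cdots\otimes u(e_n)$ shows that $W(e_1,\ldots,e_n)\Omega = u(e_1)\otimes\cdots\otimes u(e_n) \in \H^{\otimes n}$. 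Inverting the recursion writes each monomial $X_{e_1}\cdots X_{e_n}$ as a finite linear combination of Wick polynomials $W(f_1,\ldots,f_k)$ with $k \le n$ of the same parity as $n$, with coefficients that are products of the bounded numbers $\alpha_{ef} = \langle u(f),u(e)\rangle$. Because this change of basis is upper triangular in degree, it is invertible. Rewriting $Q$ in the Wick basis as $Q = \sum_k \sum_{f_1,\ldots,f_k} \gamma(f_1,\ldots,f_k)\,W(f_1,\ldots,f_k)$ and applying to $\Omega$ yields
\[
Q\Omega = \sum_{k\ge 0}\sum_{f_1,\ldots,f_k \in E}\gamma(f_1,\ldots,f_k)\,u(f_1)\otimes\cdots\otimes u(f_k) \in \mc{F}.
\]
If $Q = 0$, projecting onto each $\H^{\otimes k}$ kills the above sum term by term. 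Linear independence of $\{u(e):e\in E\}$ in $H$ (each pair $\{u(e),u(e^\circ)\}$ for $e\in E_+$ spans the same two-dimensional subspace as $\{e,e^\circ\}$, as one sees from $e = \tfrac{1}{2}\sqrt{\sigma(e)+\sigma(e^\circ)}\,(u(e) + iu(e^\circ))$, and distinct such pairs live in orthogonal subspaces of $H$) then forces $\gamma \equiv 0$, and by triangular invertibility $\beta_Q \equiv 0$.

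The main obstacle I anticipate is justifying absolute convergence of the doubly indexed sum after the basis change to Wick polynomials: a single $W(f_1,\ldots,f_k)$ receives contributions from every monomial of higher equal-parity degree. The key estimate is that the number of terms obtained by reducing a degree-$n$ monomial to a degree-$k$ Wick polynomial is a Catalan-type count in $(n-k)/2$, the $|\alpha_{ef}|$ factors are uniformly bounded by $1$, and the slack $R > \max_j \|X_j\|$ provides a geometric factor $(R/\max_j\|X_j\|)^n$ that dominates the combinatorial growth. This should allow the rearrangement and legitimize reading off $\gamma$ and then $\beta_Q$ as above.
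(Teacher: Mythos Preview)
Your approach is correct and genuinely different from the paper's. The paper does not carry out any Fock-space computation; instead it observes that the free difference quotients $\delta_e$ are closable (because conjugate variables exist, via the free quasi-free structure) and then invokes Lemma~37 of Dabrowski \cite{D10}, which says that closability of the derivations forces analytic freeness. Your argument is more hands-on: you exploit the concrete realization $X_e=\hat\ell(u(e))+\hat\ell(u(e))^*$ on $\mc F$, pass to Wick polynomials, and read off coefficients from $Q\Omega$. This is entirely self-contained and avoids the external reference; on the other hand the paper's route works in any setting where one merely has conjugate variables, without needing the Fock model (cf.\ Remark~\ref{conjugate_variables_suffice}).

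Two comments on the execution. First, your anticipated estimate is exactly what is needed and it does go through: since $\|u(e)\|_H=1$ one has $\|X_e\|=2$ and $|\alpha_{ef}|\le 1$, and expanding $X_{e_1}\cdots X_{e_n}\Omega$ term-by-term gives at most $2^n$ nonzero summands, each a simple tensor of norm $\le 1$ multiplied by a product of $\alpha$'s. Thus $\sum_{n,e}|\beta_Q(e)|\cdot 2^n \le \|Q\|_R<\infty$ when $R>2$, which legitimizes the rearrangement in $\mc F$. Second, be a little careful with the phrase ``triangular invertibility'': because the degree is unbounded, the inverse passage $\gamma\equiv 0\Rightarrow \beta_Q\equiv 0$ is not a purely algebraic finite-matrix inversion. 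What actually makes it work is again absolute convergence: one checks that $\sum_{m,e',k,f}|\beta_Q(e')|\,|a^{(m)}_{e';f}|\,|b^{(k)}_{f;e}|<\infty$ (the same $2^m$ bound handles the count), so Fubini applies and the inner sum $\sum_{k,f}a^{(m)}_{e';f}b^{(k)}_{f;e}$ collapses to $\delta_{e,e'}$, yielding $\beta_Q(e)=\sum_{k,f}\gamma(f)b^{(k)}_{f;e}=0$.
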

\begin{proof}
The hypothesis on the coefficients implies $\|Q\|_R<\infty$ and that
	\begin{align*}
		Q_k:=\sum_{0\leq n \leq k} \sum_{e_1,\ldots, e_n\in E} \beta_Q(e_1,\ldots, e_n) X_{e_1}\cdots X_{e_n}
	\end{align*}
converge to $Q$ in the $\|\cdot\|_R$-norm. As the $Q_k$ are polynomials in the $X_e$, they lie in $M$. Since the $\|\cdot\|_R$-norm dominates the operator norm by our hypothesis on $R$, we then see that $Q\in \overline{\P}^{\|\cdot\|}\subset M$.

Now, suppose $Q=0$. The rest of the proof follows \textit{mutatis mutandis} from Lemma 37 of \cite{D10} once we note that the free difference quotients $\{\delta_e\}_{e\in E}$ are closable. As each $\delta_e$ is linear combinations of the $\partial_{u(e)}$ (using the fact that $\frac{2}{1+A}$ is invertible), it suffices to show that each $\partial_{u(e)}$ is closable. This can be easily checked using Equation (10) in Corollary 2.4 from \cite{N13}.
\end{proof}

\begin{rem}\label{conjugate_variables_suffice}
The second part of the previous lemma is really asserting that the generators are analytically free. 

We also note that the closability of the $\{\delta_e\}_{e\in E}$ relies only on the existence of \emph{conjugate variables} to the $\{\partial_{u(e)}\}_{e\in E}$; that is, elements $\{\xi_{u(e)}\}_{e\in E}\subset L^2(\P, \varphi)$ satisfying
	\begin{align*}
		\varphi(\xi_{u(e)} Q) = \varphi\otimes\varphi^{op}(\partial_{u(e)} Q).
	\end{align*}
Indeed, viewing $\partial_{u(e)}\colon L^2(\P,\varphi)\to L^2(\P\otimes\P^{op}, \varphi\otimes\varphi^{op})$ as a densely defined map, $\xi_{u(e)} = \partial_{u(e)}^*(1\otimes 1)$. Then Equation (10) in Corollary 2.4 of \cite{N13} holds when $X_{u(e)}$ is replaced with $\xi_{u(e)}$, and hence $\partial_{u(e)}$ is closable.
\end{rem}

We note
\begin{align*}
\|X_e\| = \|\hat{\ell}(u(e))+\hat{\ell}(u(e))^*\| \leq \frac{2}{\sqrt{\sigma(e)+\sigma(e^\circ)}}(\|e\|+\|e^\circ\|) <2(1+\delta^\frac{1}{4}).
\end{align*}
Thus, in light of Lemma \ref{analytically_free}, we will usually consider $R\geq 2(1+\delta^\frac{1}{4})$ so that $\P^{(R)}\subset M$. In fact, due to hypotheses of the free transport theorems (\emph{cf.} Theorem 3.17 in \cite{N13} for example) we will usually restrict ourselves to
\begin{align*}
R\geq 4\delta^\frac{1}{2} >4\sqrt{\|A\|},
\end{align*}
where we have used (\ref{norm_of_A}).

We let $\P^{(R)}_\varphi$ denote the intersection of $\P^{(R)}$ with $M_\varphi$, the centralizer of $M$ with respect to $\varphi$ (i.e. the elements fixed under the modular automorphism group $\{\sigma_t^\varphi\}_{t\in\R}$).

Writing $Q=\sum_{n\geq 0} \pi_n(Q)$ where $\pi_n$ is the projection onto monomials of degree $n$, we also defined in \cite{N13} the Banach norm
\begin{align*}
\|Q\|_{R,\sigma}:= \sum_{n\geq 0} \sup_{k_n\in \Z} \| \rho^{k_n}(\pi_n(Q))\|_R,
\end{align*}
where $\rho\colon \P\to\P$ is defined by
	\begin{align*}
		\rho(X_{e_1}\cdots X_{e_n}):=\sigma_{-i}^\varphi(X_{e_n}) X_{e_1}\cdots X_{e_{n-1}},
	\end{align*}
and $\rho(a)=a$ for $a\in \C$. For a polynomial $Q\in \P$, $\rho(Q)$ is a called a \emph{$\sigma$-cyclic rearrangement} of $Q$. The tangle induced by $\rho$ on $Gr_0\mc{P}^\Gamma$ is the identity tangle but with the last string rotated clockwise around to the leftmost boundary point of the output disc. Equivalently, the tangle shifts the distinguished interval to the adjacent interval in the counter-clockwise direction.

Let $\P^{finite}=\{Q\in \P\colon \|Q\|_{R,\sigma}<\infty\}$, then it is easy to see that $\P\cap M_\varphi\subset \P^{finite}$ and we let $\P^{(R,\sigma)}=\overline{\P^{finite}}^{\|\cdot\|_{R,\sigma}}$. Observe that $\P^{(R,\sigma)}\subset \P^{(R)}\subset M$ since the $\|\cdot\|_R$-norm is dominated by the $\|\cdot\|_{R,\sigma}$-nrom. We also denote $\P^{(R,\sigma)}_\varphi=\P^{(R,\sigma)}\cap M_\varphi$ and further denote by $\P^{(R,\sigma)}_{c.s.}$ the elements in $\P^{(R,\sigma)}$ which are fixed under $\rho$. Such elements are called \emph{$\sigma$-cyclically symmetric} and have the same norm with respect to $\|\cdot\|_R$ and $\|\cdot\|_{R,\sigma}$.

Via the embedding $\hat{c}$, the norms $\|\cdot\|$, $\|\cdot\|_R$, and $\|\cdot\|_{R,\sigma}$ induce norms on $Gr_0 \mc{P}^\Gamma$, which we denote in the same way, and maps $\sigma_z^\varphi$, $z\in\C$, and $\rho$ induce a maps on $Gr_0 \mc{P}^\Gamma$, again still denoted in the same way. Let
\begin{align*}
(Gr_0 \mc{P}^\Gamma)^{(R)}&:=\overline{Gr_0 \mc{P}^\Gamma}^{\|\cdot\|_R},\qquad \text{ and}\\
(Gr_0 \mc{P}^\Gamma)^{(R,\sigma)} &:= \overline{Gr_0 \mc{P}^\Gamma}^{\|\cdot\|_{R,\sigma}}
\end{align*}
(we will see below that $\|w\|_{R,\sigma}<\infty$ for all $w\in Gr_0 \mc{P}^\Gamma$). We similarly define $(Gr_0\mc{P})^{(R)}$ and $(Gr_0\mc{P})^{(R,\sigma)}$.

$(Gr_0 \mc{P}^\Gamma)^{(R)}$ may be thought of the subalgebra of $Gr_0[[\mc{P}^\Gamma]]$ of absolutely convergent power series on loops with radii of convergence at least $R$, where a loop of length $2n$ is given degree $2n$ (modulo the constants involved in translating from $X$ to $C$). Similarly, $(Gr_0 \mc{P}^\Gamma)^{(R,\sigma)}$ may be thought of as the subalgebra of $Gr_0[[\mc{P}^\Gamma]]$ of absolutely convergent power series on the loops so that every rotation of their support loops has a radius of convergence of at least $R$. We also use the subscripts $\varphi$ and $c.s.$ to denote the corresponding subspaces.

We make the following observations for a loop $e_1\cdots e_{2n}\in L_{n,\pm}$:
\begin{align*}
\sigma_{-i}^\varphi(e_1\cdots e_{2n}) = \left(\prod_{l=1}^{2n} \frac{\mu(t(e_l))}{\mu(s(e_l))}\right) e_1\cdots e_{2n} = e_1\cdots e_{2n},
\end{align*}
and for $1\leq k< 2n$
\begin{align}\label{rho_on_loops}
\rho^k(e_1\cdots e_{2n}) &= \left(\prod_{l=2n-k+1}^{2n} \frac{\mu(t(e_l))}{\mu(s(e_l))}\right) e_{2n-k+1}\cdots e_{2n}e_1\cdots e_{2n-k}\nonumber\\
&= \frac{\mu(t(e_{2n}))}{\mu(s(e_{2n-k+1}))} e_{2n-k+1}\cdots e_{2n} e_1\cdots e_{2n-k}.
\end{align}
Note that for $e\in E$
\begin{align*}
\| C_e\|_R =\left\|\frac{\sqrt{\sigma(e)+\sigma(e^\circ)}}{2}(X_e\pm i X_{e^\circ})\right\|_R \leq \sqrt{1+\delta^{1/2}} R,
\end{align*}
where we used the bound $\frac{\mu(v)}{\mu(v')}<\delta$ for adjacent vertices $v,v'\in V$. Thus for $w= \sum_{u\in L_{n,\pm}} \beta_w(u) u \in \mc{P}^\Gamma_{n,\pm}$ we have the bound
\begin{align*}
\|w\|_R \leq \sum_{u\in L_{n,\pm}} |\beta_w(u)|(1+\delta^{1/2})^n R^{2n},
\end{align*}
and using (\ref{rho_on_loops}) we obtain
\begin{align*}
\|w\|_{R,\sigma} \leq \Delta \sum_{u\in L_{n,\pm}} |\beta_w(u)|(1+\delta^{1/2})^n R^{2n},
\end{align*}
where $\Delta= \max_{v,v'\in V} \frac{\mu(v)}{\mu(v')}<\infty$. In particular, for any $w\in Gr_0 \mc{P}^\Gamma$, $\|w\|_{R,\sigma}<\infty$.


\subsection{The Schwinger-Dyson planar tangle}

Let $\psi\colon M\to \C$ be a state on the free Araki-Woods factor $M$ and let $V\in \P^{(R,\sigma)}_{c.s.}$, with $R\geq 4\delta^\frac{1}{2}$. Then $\psi$ is said to satisfy \emph{the Schwinger-Dyson equation with potential $V$} if
\begin{align*}
\psi( \D V \# Q)=\psi\otimes\psi^{op}\otimes\text{Tr}(\J_\sigma Q)\qquad Q\in \P^N.
\end{align*}

\begin{rem}
This equation implies that the conjugate variables to the $\{\partial_{u(e)}\}_{e\in E}$ are $\{\D_{u(e)}V\}_{e\in E}$. Hence Remark \ref{conjugate_variables_suffice} implies that operators whose joint law satisfies the Schwinger-Dyson equation with potential $V$ are analytically free.
\end{rem}

Using (\ref{linear_relation_derivative1}) and (\ref{linear_relation_derivative2}) the Schwinger-Dyson equation is equivalent to
\begin{align}\label{circular_schwinger-dyson}
\psi(\D_c V \# Q) = \psi\otimes\psi^{op}\otimes\text{Tr}(\J_c Q)\qquad Q\in \P^N.
\end{align}
The solution $\psi$ is a \emph{free Gibbs state} with potential $V$ and is often denoted $\varphi_V$.

\begin{lem}\label{S-D_planar_tangle_lemma}
Let $\psi$ be a free Gibbs state with potential $V$. Assume that $V=\hat{c}(v)$ for some $v\in (Gr_0\mc{P})^{(R,\sigma)}_{c.s.}$, and that the element $f\in Gr_0[[\mc{P}^\Gamma]]$ associated to $\psi$ by the duality in (\ref{sum_pairing}) satisfies $f\in Gr_0[[\mc{P}]]$. Then the following equivalence of planar tangles holds:
\begin{equation}\label{S-D_planar_tangle}
\begin{tikzpicture}[thick, scale=.5, baseline]
\draw[Box](1.5,2.15) rectangle (3.5,3.15);
\node at (2.5,2.65) {$v$};
\node[marked, scale=.8, above left] at (1.5,3.15) {};
\draw[thickline] (2,3.15) --++ (0,1);
\draw[thick] (2.5,3.15) arc(180:0:0.9) --++ (0,-1) arc(0:-90:0.9) --++ (-0.65,0) arc(90:180:0.25);
\draw[thickline] (3,3.15) arc(180:0:0.4) --++ (0,-1) arc(0:-90:0.4) --++ (-2,0) arc(270:180:0.4) --++ (0,2);
\draw[Box] (2.2,0) rectangle (5.2,1);
\node at (3.7, 0.5) {$x$};
\node[marked, scale=.8, above left] at (2.2,1) {};
\draw[thickline] (4.8,1) --++(0,3.15);

\draw[Box] (0.6,4.15) rectangle (5.2,5.15);
\node at (2.9,4.65) {$f$};
\node[marked, scale=.8, above left] at (0.6,5.15) {};
\node at (6.2,0.5) {$=$};
\draw[Box] (7.2,0) rectangle (11.8,1);
\node at (9.5,0.5) {$x$};
\node[marked, scale=.8, above left] at (7.2,1) {};
\draw[thick] (7.5,1) --++ (0, 1.5) arc(180:90:0.5) --++ (1.35,0) arc(90:0:0.5) --++ (0,-1.5);
\draw[thickline] (8.7,1) --++ (0,0.5);
\draw[thickline] (11,1) --++ (0,0.5);
\draw[Box] (7.9,1.5) rectangle (9.5, 2.5);
\node at (8.7,2) {$f$};
\node[marked, scale=.8, above left] at (7.9,2.5) {};
\draw[Box] (10.2,1.5) rectangle (11.8,2.5);
\node at (11,2) {$f$};
\node[marked, scale=.8, above left] at (10.2,2.5) {};
\node[below right] at (12,0.5) {,};
\end{tikzpicture}
\end{equation}
where on the left we sum over the boundary points of $v$ which are connected to $x$, and on the right we sum over the positions of the right endpoint of the string.
\end{lem}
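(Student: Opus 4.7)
The plan is to reduce (\ref{S-D_planar_tangle}) to the Schwinger-Dyson equation (\ref{circular_schwinger-dyson}) by combining the planar-tangle interpretation of the c-cyclic gradient on the LHS (Lemma \ref{composing_cyclic_derivative}) with that of the c-difference quotient on the RHS (Lemma \ref{S-D_planar_tangle_RHS}).

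First, I would isolate the sub-tangle of the LHS in which $v$ is connected to $x$ at the leftmost boundary point of $x$ with the summation over boundary points of $v$, so that the remaining top boundaries of $v$ and $x$ form the output. This is precisely the tangle in Lemma \ref{composing_cyclic_derivative} with $g=v$ and $i=1$; writing $x=\sum_{u\in L}\beta_x(u)\,u$ in $\mc{P}^\Gamma$ via $i$ and factoring each loop as $u = e_1(u)\cdot u'$, the image of the sub-tangle output under $\hat{c}$ is
\begin{equation*}
\sum_{u\in L}\beta_x(u)\,\D_{e_1(u)}V \cdot \hat{c}(u').
\end{equation*}
Next I would express the outer $f$-pairing as the state $\psi$ on $M$. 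Since $f\in Gr_0[[\mc{P}]]$, Remark \ref{inner_product_comparison} gives $\langle f^*,y\rangle_\mc{P}=\tfrac{1}{V(e)}\psi(\hat{c}(y))$ for $y\in Gr_0^{\pm}\mc{P}$, where $V(e) = |V_{\pm}|$ matches the shading inherited by $y$ from its support. Collecting loops by their first edge, the LHS therefore equals
\begin{equation*}
\sum_{e\in E}\psi(\D_e V\cdot \tilde{Q}_e),\qquad \tilde{Q}_e:=\frac{1}{V(e)}\sum_{eu'\in L}\beta_x(eu')\,\hat{c}(u'),
\end{equation*}
where each $\tilde{Q}_e\in\P$ is a polynomial (finite because $x$ has finite support in $Gr_0\mc{P}$).

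Applying the Schwinger-Dyson equation (\ref{circular_schwinger-dyson}) to the test vector $\tilde{Q}=(\tilde{Q}_e)_{e\in E}\in\P^{|E|}$ then yields
\begin{equation*}
\sum_e \psi(\D_e V\cdot\tilde{Q}_e)=\sum_e \psi\otimes\psi^{op}(\partial_e\tilde{Q}_e)=\psi\otimes\psi^{op}\!\left(\sum_{eu\in L}\frac{1}{V(e)}\beta_x(eu)\,\partial_e\hat{c}(u)\right),
\end{equation*}
and Lemma \ref{S-D_planar_tangle_RHS} identifies this last expression with the RHS tangle of (\ref{S-D_planar_tangle}).

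The main subtlety is the $V(e)^{-1}$ normalization. These factors, which arise from the mismatch between the $\mc{P}$- and $\mc{P}^\Gamma$-valued pairings (Remark \ref{inner_product_comparison}), must be absorbed into the test vector of Schwinger-Dyson so that the equation applies to a polynomial, and they must reappear with exactly the same pattern in Lemma \ref{S-D_planar_tangle_RHS} in order for the two ends of the argument to align. A secondary point to verify is that the shading of the sub-tangle output, which controls the $V(e)$ produced in the second step, is indeed determined by the first edge $e_1(u)$ of the loop $u$ in the support of $x$.
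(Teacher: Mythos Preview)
Your proposal is correct and follows essentially the same route as the paper: identify the inner tangle via Lemma \ref{composing_cyclic_derivative} as $\sum_{eu\in L}\beta_x(eu)\,\D_e V\cdot \hat c(u)$, convert the outer $f$-pairing into $\psi$ via Remark \ref{inner_product_comparison} (picking up the $1/V(e)$ factors according to shading), apply the componentwise Schwinger-Dyson equation (\ref{circular_schwinger-dyson}), and then invoke Lemma \ref{S-D_planar_tangle_RHS} to recognize the result as the right-hand tangle. The ``secondary point'' you flag---that the shading of the sub-tangle output is governed by the first edge of each support loop---is exactly what the paper handles by writing $y=y_++y_-$ and noting that the $\pm$ components of $x$ embed as sums over $L_\pm$.
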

\begin{proof}
Let
\[
\begin{tikzpicture}[thick, scale=.5]
\node[left] at (1,0.5) {$y=$};
\draw[Box](1.5,2.15) rectangle (3.5,3.15);
\node at (2.5,2.65) {$v$};
\node[marked, scale=.8, above left] at (1.5,3.15) {};
\draw[thickline] (2,3.15) --++ (0,1);
\draw[thick] (2.5,3.15) arc(180:0:0.9) --++ (0,-1) arc(0:-90:0.9) --++ (-0.65,0) arc(90:180:0.25);
\draw[thickline] (3,3.15) arc(180:0:0.4) --++ (0,-1) arc(0:-90:0.4) --++ (-2,0) arc(270:180:0.4) --++ (0,2);
\draw[Box] (2.2,0) rectangle (5.2,1);
\node at (3.7, 0.5) {$x$};
\node[marked, scale=.8, above left] at (2.2,1) {};
\draw[thickline] (4.8,1) --++(0,3.15);

\node[below right] at (5.2,0.5) {,};
\end{tikzpicture}
\]
and suppose $x$ embeds as $\sum_{u\in L}\beta_x(u) u\in Gr_0\mc{P}^\Gamma$.
Then by Remark \ref{inner_product_comparison} and Lemma \ref{composing_cyclic_derivative}
\begin{align*}
\<f^*,y\>_\mc{P}&= \sum_{v\in V_+} \frac{1}{|V_+|}[\<f^*,y_+\>_{\mc{P}^\Gamma}](v) + \sum_{v\in V_-}\frac{1}{|V_-|}[\<f^*, y_-\>_{\mc{P}^\Gamma}](v)\\
&= \psi\circ\hat{c}\left( \frac{1}{|V_+|} y_+ + \frac{1}{|V_-|} y_-\right)= \sum_{eu\in L} \frac{\beta_x(eu)}{V(e)}\psi(\D_e \hat{c}(v) \cdot \hat{c}(u)),
\end{align*}
where $V(e)=|V_+|$ if $e\in E_+$ and $V(e)=|V_-|$ otherwise. Next applying (\ref{circular_schwinger-dyson}) yields
\begin{align*}
\<f^*,y\>_\mc{P} = \sum_{eu\in L} \frac{\beta_x(eu)}{V(e)} \psi\otimes\psi^{op}(\partial_e \hat{c}(u)),
\end{align*}
which is equivalent to the right-hand side of (\ref{S-D_planar_tangle}) by Lemma \ref{S-D_planar_tangle_RHS}.
\end{proof}

\begin{defi}
For $v\in (Gr_0\mc{P})^{(R,\sigma)}_{c.s.}$, we say $f\in Gr_0[[\mc{P}]]$ \emph{satisfies the Schwinger-Dyson planar tangle with potential $v$} if (\ref{S-D_planar_tangle}) holds for all $x\in Gr_0\mc{P}$.
\end{defi}

Recall that in \cite{N13} we considered the following potential
\begin{align*}
V_0=\frac{1}{2}\sum_{e,f\in E} \left[\frac{1+A}{2}\right]_{ef} X_f X_e,
\end{align*}
which satisfied $\D V_0=X$. The (unique) free Gibbs state with potential $V_0$ is the vacuum state $\varphi$. Furthermore, by Theorem 2.12 in \cite{N13} there is a unique free Gibbs state with potential $V$ when $\|V - V_0\|_{R,\sigma}$ is sufficiently small.

Rewriting $V_0$ in terms of the $C_e$ via (\ref{linear_relation}) and using (\ref{linear_relation2}) yields
\begin{align*}
V_0=\frac{1}{2} \sum_{e\in E} \sigma(e) C_e C_{e^\circ},
\end{align*}
and $\D_c V_0=U\# \D V_0=C$. Observe that $V_0=\hat{c}(v_0)$ where $v_0\in Gr_0\mc{P}$ is the sum of the $1$-box Temperley-Lieb diagrams
\[
\begin{tikzpicture}[thick, scale=.5]
\node[left] at (-0.6,0.5) {$v_0=$};
\node[left] at (0,0.5) {$\frac{1}{2}$};
\draw[Box](0,0) rectangle (2,1);
\node[marked, scale=.8, above left] at (0,1) {};
\draw[thick, fill=gray] (0.5,0.96) arc(180:360:0.5);
\node at (2.5,0.5) {$+$};
\node[left] at (3.5,0.5) {$\frac{1}{2}$};
\draw[Box, fill=gray] (3.5,0) rectangle (5.5,1);
\node[marked, scale=.8, above left] at (3.5,1) {};
\draw[thick, fill=white] (4,0.96) arc(180:360:0.5);
\node[below right] at (5.5,0.5) {,};
\end{tikzpicture}
\]
which embeds as $\frac{1}{2} \sum_{e\in E} \sigma(e^\circ) e e^\circ \in Gr_0 \mc{P}^\Gamma$. Since $\varphi$ satisfies with Schwinger-Dyson equation with potential $V_0$, and $TL_\infty$ is the element associated to it by the duality in (\ref{sum_pairing}), we know $TL_\infty$ satisfies the Schwinger-Dyson planar tangle with potential $v_0$ by the previous lemma.

However, this is true by visual inspection within the context of the planar algebra: note that
\[
	\begin{tikzpicture}[thick, scale=.5]
	\draw[Box] (0, 0) rectangle (2,1);
	\node at (1,0.5) {$v_0$};
	\node[marked, scale=.8, above left] at (0,1) {};
	\draw[thickline] (0.5, 1) --++(0,1.5);
	\draw[thick] (1, 1) arc(180:0:1) --+(0,-1) arc (0:-90: 1) arc(90:180:0.5);
	\draw[thickline] (1.5,1) arc(180:0:.5) --++(0,-1) arc(0:-90:.5cm) --++(-2,0) arc(-90:-180:.5cm) --++(0,2.5);
	\node at (4,0.5) {$=$};
	\draw[Box] (5,0) rectangle (7,1);
	\node[marked, scale=.8, above left] at (5,1) {};
	\draw[very thick, fill=gray] (6,1) --++ (.75,0) arc(90:0:.25) --++ (0,-0.5) arc(0:-90:0.25) --++ (-0.75,0);
	\draw[thick] (6,0) --++ (0,1);
	\node at (7.5,0.5) {$+$};
	\draw[Box, fill=gray] (8,0) rectangle (10,1);
	\node[marked, scale=.8, above left] at (8,1) {};
	\draw[very thick, fill=white] (9,1) --++ (.75,0) arc(90:0:.25) --++ (0,-0.5) arc(0:-90:0.25) --++ (-0.75,0);
	\draw[thick] (9,0) --++ (0,1);
	\node[right] at (10,0.3) {.};
	\end{tikzpicture}
\]
Hence the Schwinger-Dyson planar tangle holds simply by following the leftmost string attached to $x$ through the diagrams in $TL_\infty$.

In  Section \ref{constructing_transport_element_section}, we construct elements $TL_\infty^{(v)}\in Gr_0[[\mc{P}]]$ which satisfy the Schwinger-Dyson planar tangle for potentials $v$ close to $v_0$ with respect to the $\|\cdot\|_{R,\sigma}$-norm. Our convention will be to denote the difference by $w=v-v_0$. We will also construct an embedding of $Gr_0\mc{P}^\Gamma$ into $M$ taking the edges $e\in E$ to non-commutative random variables whose joint law with respect to $\varphi$ is the free Gibbs state with potential $V=\hat{c}(v)$.


\section{Free Transport}

For the remainder of the paper we fix $R'>R\geq 4\delta^\frac{1}{2}$. The constants obtained in the following will depend only on $R$, $R'$, $|E|$, and $\|A\|$.


\subsection{Constructing the transport element}\label{constructing_transport_element_section}

The main theorem of \cite{N13} showed that if $Z$ is an $N$-tuple of random variables in some non-commutative probability $(L,\psi)$ whose joint law $\psi_Z$ is the free Gibbs state with potential $V$, and $\|V-V_0\|_{R,\sigma}$ is sufficiently small, then $(W^*(Z),\psi)\cong (W^*(X),\varphi)$ and the isomorphism is state-preserving. Stated more succinctly, the theorem gives $W^*(\varphi_V)\cong W^*(\varphi_{V_0})$ for $\|V-V_0\|_{R,\sigma}$ sufficiently small. In this section we will show that if $v\in (Gr_0 \mc{P})^{(R,\sigma)}_{c.s.}$ with $\|v-v_0\|_{R,\sigma}$ is sufficiently small, then there is an element satisfying the Schwinger-Dyson planar tangle with potential $v$.

Recall that the map $\mathscr{N}\colon\mathscr{P}\rightarrow\mathscr{P}$ is defined by multiplying a monomial of degree $n$ by $n$, and $\Sigma$ is its inverse on monomials of degree one or higher. Also, $\mathscr{S}\colon \mathscr{P}\rightarrow\mathscr{P}$ averages a monomial over its $\sigma$-cyclic rearrangements. These induce maps on $Gr_0 \mc{P}^\Gamma$, which we also denote $\mathscr{N}$, $\Sigma$, and $\mathscr{S}$:
	\begin{align*}
		\mathscr{N}(e_1\cdots e_{2n}) &= 2n e_1\cdots e_{2n},\\
		\Sigma(e_1\cdots e_{2n}) &= \frac{1}{2n} e_1\cdots e_{2n},\ \text{ and}\\
		\mathscr{S}(e_1\cdots e_{2n}) & = \frac{1}{2n}\sum_{k=1}^{2n} \rho^k(e_1\cdots e_{2n}),
	\end{align*}
or for $x\in \mc{P}_n\subset\mc{P}_n^\Gamma$
	\begin{align*}
		\mathscr{N}(x)&=2n x,\\
		\Sigma(x) &= \frac{1}{2n} x,\ \text{ and}\\
		\mathscr{S}(x) &= \frac{1}{2n}\sum_{k=1}^{2n} \rho^k(x).
	\end{align*}

\begin{lem}\label{F_is_a_tangle}
Let $w\in (Gr_0 \mc{P})^{(R'+1,\sigma)}_{c.s.}$ and denote $W:=\hat{c}(w)$. Consider the following map defined on $\{G\in\P^{(R',\sigma)}_{c.s.}\colon \|G\|_{R',\sigma}\leq 1\}$:
	\begin{align*}
		F(G)=&-W(C+\D_c\Sigma G) - \frac{1}{2} \sum_{e\in E} \sigma(e) \left(\D_e \Sigma G\right)\left(\D_{e^\circ}\Sigma G\right)\\
			&+ \sum_{m\geq 1} \frac{(-1)^{m+1}}{m} (1\otimes\varphi)\circ\text{Tr}\left( \left[U \frac{2A^{-1}}{1+A} U^T\right]^{-1} \J_c \D_c \Sigma G\# \left( \J_c C \# \J_c\D_c\Sigma G\right)^{m-1}\right)\\
			&+ \sum_{m\geq 1} \frac{(-1)^{m+1}}{m} (\varphi\otimes 1)\circ\text{Tr}\left( \left[U \frac{2A}{1+A} U^T\right]^{-1} \J_c \D_c \Sigma G \# \left( \J_c C \# \J_c\D_c\Sigma G\right)^{m-1}\right).
	\end{align*}
Consider the following planar tangles on $Gr_0 \mc{P}^\Gamma$:
	\begin{equation*}
	\begin{tikzpicture}[thick, scale=.5]
		\node[left] at (-1,2) {$T_1(g)=$};

		\draw[Box] (0, 3) rectangle (3,4); \node at (1.5,3.5) {$v_0+\Sigma g$}; \node[marked, scale=.8, above left] at (0,4) {};
		\draw[thickline] (0.75, 4) --++(0,1.5);
		\draw[thick] (1.5, 4) arc(180:0: 1.3cm and 1cm) --+(0,-1) arc (0:-90: 1.2 cm and 1cm) -- (2, 2) arc(90:180:.5cm);
		\draw[thickline] (2.25,4) arc(180:0: .6cm and .5cm) --++(0,-1) arc(0:-90:.5cm) --++(-3,0) arc(-90:-180:.5cm) --++(0,2.5);
		
		\node at (5.25,3.75) {$\cdots$};
			
		\draw[Box] (7, 3) rectangle (10,4); \node at (8.5,3.5) {$v_0+\Sigma g$}; \node[marked, scale=.8, above left] at (7,4) {};
		\draw[thickline] (7.75, 4) --++(0,1.5);
		\draw[thick] (8.5, 4) arc(180:0: 1.3cm and 1cm) --++(0,-1) arc (0:-90: 1.2 cm and 1cm) -- (8.75, 2) arc(90:180:.5cm);
		\draw[thickline] (9.25,4) arc(180:0: .6cm and .5cm) --++(0,-1) arc(0:-90:.5cm) --++(-3,0) arc(-90:-180:.5cm) --++(0,2.5);
		
		\draw[Box] (1,.5) rectangle (9,1.5); \node at (5,1) {$w$}; \node[marked,scale=.8, above left] at (1,1.5) {};
		
		\node[right] at (9,0.8) {,};
		
	\end{tikzpicture}
	\end{equation*}
where the number discs containing $v_0+\Sigma g$ varies according to the components of $w$ and for each such disc we sum over the boundary points connecting to $w$;
	\begin{equation*}
	\begin{tikzpicture}[thick, scale=.5]
		\node[left] at (-1,.5) {$T_2(g)=$};
		
		\draw[Box] (0,0) rectangle (2, 1); \node at (1,.5) {$\Sigma g$}; \node[marked, scale=.8, above left] at (0, 1) {};
		\draw[thickline] (0.5,1) --++(0,1.3);
		\draw[thick] (1,1) arc(180:0: 1.1cm and .9cm) --++(0,-1) arc(180:270: 1cm) --++(2.3,0) arc(270:360:1cm);
		\draw[thickline] (1.5,1) arc(180:0: .6 cm and .5cm) --++(0,-1) arc(0:-90:.5cm) --++(-2.2,0) arc(270:180:.5cm) --++(0,2.3);

		\draw[Box] (4.3,0) rectangle (6.3,1); \node at (5.3,.5) {$\Sigma g$}; \node[marked, scale=.8, above left] at (4.3,1) {};
		\draw[thickline] (4.8,1) --++(0,1.3);
		\draw[thick] (5.3,1) arc(180:0: 1.1cm and .9cm) --++(0,-1);
		\draw[thickline] (5.8,1) arc(180:0: .6 cm and .5cm) --++(0,-1) arc(0:-90:.5cm) --++(-2.2,0) arc(270:180:.5cm) --++(0,2.3);
		
		\node[right] at (7.5,0.3) {,};
	\end{tikzpicture}
	\end{equation*}
where in each disc we sum over the boundary point connecting to the other disc;
	\begin{equation*}
	\begin{tikzpicture}[thick, scale=.5]
		\node[left] at (1, 4.5) {$T_{3,m}(g)=$};
		
		\draw[Box] (0,0) rectangle (2,1); \node at (1,.5) {$\Sigma g$}; \node[marked, scale=.8, above left] at (0,1) {};
		\draw[thickline] (0.5,1) arc(0:60:.5cm) arc(240:90:.5cm) --++ (2.5,0) arc (90:0:.5cm) --++(0,-3.5);
		\draw[thick] (1,1) arc(180:0: 1.05cm and .9cm) --++(0,-1.3) arc(0:-90:.7cm) --++(-.05,0) arc(90:180:.7cm) --++(0,-1.05) arc(180:270:.5cm)--++(23.9,0) 		arc(-90:0:.5cm);
		\draw[thickline] (1.5,1) arc(180:0: .6 cm and .5cm) --++(0,-1) arc(0:-90:.5cm) --++(-2.4,0) arc(270:180:.5cm) --++(0,1) arc(180:120:.5cm) arc(-60:0:.5cm) --++ 	(0,1.5);
		\draw[thick] (-.05,2) arc(180:90: 1cm) --++(2.2,0) arc(90:0:1cm) --++(0,-2);
		
		\draw[Box] (5.5,0) rectangle (7.5,1); \node at (6.5,.5) {$\Sigma g$}; \node[marked, scale=.8, above left] at (5.5,1) {};
		\draw[thickline] (6,1) arc(0:60:.5cm) arc(240:90:.5cm) --++ (2.5,0) arc (90:0:.5cm) --++(0,-3.5);
		\draw[thick] (6.5,1) arc(180:0: 1.05cm and .9cm) --++(0,-1.3) arc(0:-90:.7cm) --++(-2.75,0) arc(270:180:1cm);
		\draw[thickline] (7,1) arc(180:0: .6 cm and .5cm) --++(0,-1) arc(0:-90:.5cm) --++(-2.4,0) arc(270:180:.5cm) --++(0,1) arc(180:120:.5cm) arc(-60:0:.5cm) --++ (0,1.5);
		\draw[thick] (5.45,2) arc(180:90: 1cm) --++(2.2,0) arc(90:0:1cm) --++(0,-2) arc(180:270:1cm);
		
		\draw[thick, dotted] (10.65,-1)--++(1,0);
		
		\node at (12.1,1) {$\cdots$};
		
		\draw[thick, dotted] (13.55,3) --++(-1,0);
		
		\draw[Box] (16,0) rectangle (18,1); \node at (17,.5) {$\Sigma g$}; \node[marked, scale=.8, above left] at (16,1) {};
		\draw[thickline] (16.5,1) arc(0:60:.5cm) arc(240:90:.5cm) --++ (2.5,0) arc (90:0:.5cm) --++(0,-3.5);
		\draw[thick] (17,1) arc(180:0: 1.05cm and .9cm) --++(0,-1.3) arc(0:-90:.7cm) --++(-2.75,0) arc(270:180:1cm)--++(0,2) arc(0:90:1cm);
		\draw[thickline] (17.5,1) arc(180:0: .6 cm and .5cm) --++(0,-1) arc(0:-90:.5cm) --++(-2.4,0) arc(270:180:.5cm) --++(0,1) arc(180:120:.5cm) arc(-60:0:.5cm) --++ 		(0,1.5);
		\draw[thick] (15.95,2) arc(180:90: 1cm) --++(2.2,0) arc(90:0:1cm) --++(0,-2);
		
		\draw[Box] (21.5,0) rectangle (23.5,1); \node at (22.5,.5) {$\Sigma g$}; \node[marked, scale=.8, above left] at (21.5,1) {};
		\draw[thickline] (22,1) arc(0:60:.5cm) arc(240:90:.5cm) --++ (2.5,0) arc (90:0:.5cm) --++(0,-3.5);
		\draw[thick] (22.5,1) arc(180:0: 1.05cm and .9cm) --++(0,-1.3) arc(0:-90:.7cm) --++(-2.75,0) arc(270:180:1cm);
		\draw[thickline] (23,1) arc(180:0: .6 cm and .5cm) --++(0,-1) arc(0:-90:.5cm) --++(-2.4,0) arc(270:180:.5cm) --++(0,1) arc(180:120:.5cm) arc(-60:0:.5cm) --++ (0,1.5);
		\draw[thick] (21.45,2) arc(180:90: 1cm) --++(3.1,0) arc(90:0:1cm) --++(0,-4.76);
		
		\draw[Box] (2.5,-2.6) rectangle (26,-1.6); \node at (14.25,-2.1) {$TL_\infty$}; \node[marked, scale=.8, above left] at (2.5,-1.6) {};
	
		\node[right] at (26.5,-2.3) {,};
	\end{tikzpicture}
	\end{equation*}
where there are exactly $m$ discs containing $\Sigma g$ and for each disc we sum over the two boundary points connecting to one of the other $m-1$ discs; and finally
	\begin{equation*}
	\begin{tikzpicture}[thick, scale=.5]
		\node[left] at (0, 6) {$T_{4,m}(g)=$};
		
		\draw[Box] (-1.1,3.4) rectangle (22.4,4.4); \node at (10.65,3.9) {$TL_\infty$}; \node[marked, scale=.8, above left] at (-1.1, 4.4) {};
		
		\draw[Box] (0,0) rectangle (2,1); \node at (1,.5) {$\Sigma g$}; \node[marked, scale=.8, above left] at (0,1) {};
		\draw[thickline] (0.5,1) arc(0:60:.5cm) arc(240:90:.5cm) --++ (2.5,0) arc (90:0:.5cm) --++(0,-3.5);
		\draw[thick] (1,1) arc(180:0: 1.05cm and .9cm) --++(0,-1.3) arc(0:-90:.7cm) --++(-3.5,0) arc(270:180:.7cm) --++(0,4.6) arc(180:90:.7cm) --++(23.4,0) arc(90:0:.7cm) 	--++(0,-.8);
		\draw[thickline] (1.5,1) arc(180:0: .6 cm and .5cm) --++(0,-1) arc(0:-90:.5cm) --++(-2.4,0) arc(270:180:.5cm) --++(0,1) arc(180:120:.5cm) arc(-60:0:.5cm) --++ 	(0,1.5);
		\draw[thick] (-.05,2) arc(180:90: 1cm) --++(2.2,0) arc(90:0:1cm) --++(0,-2);
	
		\draw[Box] (5.5,0) rectangle (7.5,1); \node at (6.5,.5) {$\Sigma g$}; \node[marked, scale=.8, above left] at (5.5,1) {};
		\draw[thickline] (6,1) arc(0:60:.5cm) arc(240:90:.5cm) --++ (2.5,0) arc (90:0:.5cm) --++(0,-3.5);
		\draw[thick] (6.5,1) arc(180:0: 1.05cm and .9cm) --++(0,-1.3) arc(0:-90:.7cm) --++(-2.75,0) arc(270:180:1cm);
		\draw[thickline] (7,1) arc(180:0: .6 cm and .5cm) --++(0,-1) arc(0:-90:.5cm) --++(-2.4,0) arc(270:180:.5cm) --++(0,1) arc(180:120:.5cm) arc(-60:0:.5cm) --++ (0,1.5);
		\draw[thick] (5.45,2) arc(180:90: 1cm) --++(2.2,0) arc(90:0:1cm) --++(0,-2) arc(180:270:1cm);
		
		\draw[thick, dotted] (10.65,-1)--++(1,0);
		
		\node at (12.1,1) {$\cdots$};

		\draw[thick, dotted] (13.55,3) --++(-1,0);
		
		\draw[Box] (16,0) rectangle (18,1); \node at (17,.5) {$\Sigma g$}; \node[marked, scale=.8, above left] at (16,1) {};
		\draw[thickline] (16.5,1) arc(0:60:.5cm) arc(240:90:.5cm) --++ (2.5,0) arc (90:0:.5cm) --++(0,-3.5);
		\draw[thick] (17,1) arc(180:0: 1.05cm and .9cm) --++(0,-1.3) arc(0:-90:.7cm) --++(-2.75,0) arc(270:180:1cm)--++(0,2) arc(0:90:1cm);
		\draw[thickline] (17.5,1) arc(180:0: .6 cm and .5cm) --++(0,-1) arc(0:-90:.5cm) --++(-2.4,0) arc(270:180:.5cm) --++(0,1) arc(180:120:.5cm) arc(-60:0:.5cm) --++ 	(0,1.5);
		\draw[thick] (15.95,2) arc(180:90: 1cm) --++(2.2,0) arc(90:0:1cm) --++(0,-2);
		
		\draw[Box] (21.5,0) rectangle (23.5,1); \node at (22.5,.5) {$\Sigma g$}; \node[marked, scale=.8, above left] at (21.5,1) {};
		\draw[thickline] (22,1) arc(0:60:.5cm) arc(240:90:.5cm) --++ (2.5,0) arc (90:0:.5cm) --++(0,-3.5);
		\draw[thick] (22.5,1) arc(180:0: 1.05cm and .9cm) --++(0,-1.3) arc(0:-90:.7cm) --++(-2.75,0) arc(270:180:1cm);
		\draw[thickline] (23,1) arc(180:0: .6 cm and .5cm) --++(0,-1) arc(0:-90:.5cm) --++(-2.4,0) arc(270:180:.5cm) --++(0,1) arc(180:120:.5cm) arc(-60:0:.5cm) --++ (0,1.5);
		\draw[thick] (21.45,2) arc(180:90: 1cm) --++(.05,0) arc(-90:0:.5cm);

		\node[right] at (25.2,0) {,};
	\end{tikzpicture}
	\end{equation*}
where again there are exactly $m$ discs containing $\Sigma g$  and for each disc we sum over the two boundary points connecting to one of the other $m-1$ discs.

Then on $\{g\in (Gr_0 \mc{P}^\Gamma)^{(R',\sigma)}_{c.s.}\colon \|g\|_{R',\sigma}\leq 1\}$,
	\begin{align*}
		F\circ \hat{c} = \hat{c} \circ T,
	\end{align*}
where
	\begin{align*}
		T=-T_1 - \frac{1}{2} T_2 + \sum_{m\geq 1}\frac{(-1)^{m+1}}{m} \left(T_{3,m} + T_{4,m}\right),
	\end{align*}
and convergence is with respect to the $\|\cdot\|_{R'}$-norm.
\end{lem}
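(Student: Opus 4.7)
The plan is to verify the identity $F\circ\hat{c}=\hat{c}\circ T$ by matching each of the four summands in the definition of $F$ with the image under $\hat{c}$ of one of the tangles $T_1$, $T_2$, $T_{3,m}$, $T_{4,m}$. The crucial tool throughout is Lemma \ref{composing_cyclic_derivative}, which interprets "attach a disc containing $g\in Gr_0\mc{P}$ to a position in a word via a single string" as the substitution $C_{e_i}\mapsto \D_{e_i}\hat{c}(g)$. Convergence of the $m$-sum with respect to $\|\cdot\|_{R'}$ will follow at the end from $\|g\|_{R',\sigma}\leq 1$, the bound on $\|\J_c C\|$ coming from (\ref{linear_relation2}) together with (\ref{norm_of_A}), and the assumption $R'>4\delta^{1/2}$.

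For the first summand I would use $\D_c V_0=C$ to rewrite $C+\D_c\Sigma G=\D_c(V_0+\Sigma G)$, so that $-W(C+\D_c\Sigma G)$ is obtained from $W=\hat{c}(w)$ by substituting $\D_{e_i}\hat{c}(v_0+\Sigma g)$ for $C_{e_i}$ at every position $i$ of each monomial. Applying Lemma \ref{composing_cyclic_derivative} once per position (and summing over the boundary points of $v_0+\Sigma g$ as in the tangle's convention) produces exactly $\hat{c}(T_1(g))$. For the second summand I would recall that $v_0$ embeds as $\tfrac{1}{2}\sum_{e\in E}\sigma(e^\circ)\,e\,e^\circ$, so that
\[
\tfrac{1}{2}\sum_{e\in E}\sigma(e)(\D_e\Sigma G)(\D_{e^\circ}\Sigma G)
\]
is precisely $V_0$ with each $C_e$ replaced by $\D_e\Sigma G$. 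Since $v_0$ is a cup, the planar realisation of this double substitution is exactly $T_2$: two $\Sigma g$ discs whose "free" strings are contracted through the cup structure of $v_0$, with the remaining boundary points of each disc closed off by the caps visible in the figure.

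For the third and fourth summands the key point is that the expression
\[
\J_c\D_c\Sigma G\,\#\,(\J_c C\,\#\,\J_c\D_c\Sigma G)^{m-1}
\]
is a "necklace" of $m$ copies of the two-tensor object $\J_c\D_c\Sigma G$ (each corresponding to an $\Sigma g$ disc with two marked boundary points produced by the double derivative $\J_c\D_c$) linked by $m-1$ copies of $\J_c C$. By (\ref{derivative_of_C}), each $\J_c C$-link is nothing but a single string connecting an edge-position $e$ in one disc to the opposite edge $e^\circ$ in the next, carrying the scalars $\sigma(e),\sigma(e^\circ)$. The outer $\text{Tr}$ closes the chain cyclically, and the partial states $(1\otimes\varphi)$ and $(\varphi\otimes 1)$ pair the remaining tensor leg with the vacuum, which by (\ref{trace_relation}) and Remark \ref{inner_product_comparison} corresponds to contracting that leg against $TL_\infty$. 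The two choices of tensor side produce the two tangles $T_{3,m}$ and $T_{4,m}$. The prefactor matrices $[U\tfrac{2A^{\pm 1}}{1+A}U^T]^{-1}$ arise because $\J_c\D_c$ is most naturally expressed in the $X_e$-basis, and one needs (\ref{linear_relation_derivative1}), (\ref{linear_relation_derivative2}) and (\ref{linear_relation2}) to translate it back into the $C_e$-basis consistent with the planar picture; the $A^{+1}$ versus $A^{-1}$ case distinction reflects the two diagonal entries of $A(e)$ and so the two orientations in which the $\varphi$-contraction can be performed.

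The main obstacle will be the combinatorial bookkeeping in the last two terms: one must carefully check that cyclically tracing the alternating Jacobians, combined with the correct matrix prefactor, reproduces the exact string arrangement of $T_{3,m}$ and $T_{4,m}$ — in particular that $TL_\infty$ sits on the correct side of each diagram and that the summation conventions (over which boundary points of each $\Sigma g$ disc are joined to which neighbour) in the tangles match the sum over matrix indices produced by $\J_c$ and $\text{Tr}$. Once these identifications are established term-by-term, the stated identity follows, and convergence in $\|\cdot\|_{R'}$ reduces to a geometric-series estimate using the norm bound on $\J_c C\#\J_c\D_c\Sigma G$.
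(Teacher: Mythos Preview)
Your approach is essentially the paper's: verify $F\circ\hat{c}=\hat{c}\circ T$ term by term, invoking Lemma~\ref{composing_cyclic_derivative} for $T_1$ and reducing the remaining tangles to explicit computations on delta-loops followed by multilinear extension. Your handling of $T_1$ and $T_2$ matches the paper.

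For $T_{3,m}$ and $T_{4,m}$ your high-level picture (the necklace of $\Sigma g$-discs, $\J_c C$ as a single-string link via (\ref{derivative_of_C}), $TL_\infty$ implementing the $\varphi$-contraction) is correct, but your account of the prefactor matrices is off. They do not arise as a basis-change correction for $\J_c\D_c$, and the $A^{+1}$ versus $A^{-1}$ distinction cannot reflect ``the two diagonal entries of $A(e)$'' since those are equal. The paper instead computes directly that
\[
\left(\left[U\tfrac{2A^{-1}}{1+A}U^T\right]^{-1}\right)(e)=\begin{pmatrix}0&\sigma(e^\circ)^3\\ \sigma(e)^3&0\end{pmatrix},
\]
which differs from $(\J_c C)(e)$ by an extra factor $\sigma(\cdot)^2$. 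In the tangle this extra factor is exactly the winding contribution of the single string that closes the cycle (running from the first $\Sigma g$-disc around past $TL_\infty$ to the last), as opposed to the half-turn strings linking consecutive discs. The paper makes this precise by writing out $\tilde T_{3,m}(u_1,\ldots,u_m)$ on loops $u_l=e_{l,1}\cdots e_{l,n_l}$, tracking every $\sigma$-factor from the bipartite-graph tangle action, and then regrouping via $\sigma(e_{m,i_m})\delta_{e_{m,i_m}=e_{1,j_1}^\circ}=\sigma(e_{m,i_m}^\circ)\,\sigma(e_{1,j_1}^\circ)^2\,\delta_{e_{m,i_m}=e_{1,j_1}^\circ}$ so that the $l=1$ factor acquires the extra $\sigma(e_{1,j_1}^\circ)^2$ needed to match the prefactor. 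The $A$ versus $A^{-1}$ distinction between $T_{3,m}$ and $T_{4,m}$ comes from the opposite placement of $TL_\infty$ and hence the opposite winding of that closing string. Your outline would go through once you actually carry out this loop-level computation; that bookkeeping \emph{is} the paper's proof.
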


\begin{proof}
We will prove this equivalence term by term. For $w\in Gr_0\mc{P}$ and $W=\hat{c}(w)$, we have that $\hat{c}\circ T_1(g) = W(C+\D_c\Sigma \hat{c}(g))$ immediately by Lemma \ref{composing_cyclic_derivative}. For $w\in (Gr_0\mc{P})^{(R'+1,\sigma)}_{c.s.}$, we can sum over the support of $w$ to obtain the equality since convergence is guaranteed by $\|W(C+\D_c\Sigma \hat{c}(g))\|_{R'}\leq \|W\|_{R'+1}$ (\emph{cf.} Lemma 2.5 in \cite{N13}).

Let
	\begin{equation*}
	\begin{tikzpicture}[thick, scale=.5]
		\node[left] at (-1,.5) {$\tilde{T}_2(u_1,u_2)=$};

		\draw[Box] (0,0) rectangle (2, 1); \node at (1,.5) {$u_1$}; \node[marked, scale=.8, above left] at (0, 1) {};
		\draw[thickline] (0.5,1) --++(0,1.3);
		\draw[thick] (1,1) arc(180:0: 1.1cm and .9cm) --++(0,-1) arc(180:270: 1cm) --++(2.3,0) arc(270:360:1cm);
		\draw[thickline] (1.5,1) arc(180:0: .6 cm and .5cm) --++(0,-1) arc(0:-90:.5cm) --++(-2.2,0) arc(270:180:.5cm) --++(0,2.3);

		\draw[Box] (4.3,0) rectangle (6.3,1); \node at (5.3,.5) {$u_2$}; \node[marked, scale=.8, above left] at (4.3,1) {};
		\draw[thickline] (4.8,1) --++(0,1.3);
		\draw[thick] (5.3,1) arc(180:0: 1.1cm and .9cm) --++(0,-1);
		\draw[thickline] (5.8,1) arc(180:0: .6 cm and .5cm) --++(0,-1) arc(0:-90:.5cm) --++(-2.2,0) arc(270:180:.5cm) --++(0,2.3);
		
		\node[right] at (7.5,0.3) {.};

	\end{tikzpicture}
	\end{equation*}
We will show $\hat c\circ \tilde{T}_2(u_1,u_2) = \sum_{e\in E} \sigma(e) (\D_e \hat c(u_1) )(\D_{e^\circ} \hat c(u_2))$. First assume each $u_l$, $l\in\{1,2\}$, is a delta function supported on the loop $e_{l,1}\cdots e_{l,n_l}$. Then
	\begin{align*}
		\tilde{T}_2(u_1,u_2)= \sum_{j_1=1}^{n_1}\sum_{j_2=1}^{n_2} &\delta_{e_{2,j_2}=e_{1,j_1}^\circ} \sigma(e_{2,j_2}) \sigma(e_{1,j_1+1})^2\cdots \sigma(e_{1,n_1})^2\sigma(e_{2,j_2+1})^2\cdots \sigma(e_{2,n_2})^2\\
			&\times e_{1,j_1+1}\cdots e_{1,n_1}e_{1,1}\cdots e_{1,j_1-1} e_{2,j_2+1}\cdots e_{2,n_2} e_{2,1}\cdots e_{2,j_2-1}\\
=\sum_{e\in E} \sum_{j_1=1}^{n_1} 				
			&\delta_{e_{1,j_1}=e^\circ}\sigma(e)\sigma(e^\circ)\sigma(e_{1,j_1+1})^2\cdots \sigma(e_{1,n_1})^2 e_{1,j_1+1}\cdots e_{1,n_1}e_{1,1}\cdots e_{1,j_1-1}\\
\times\sum_{j_2=1}^{n_2} &\delta_{e_{2,j_2}=e} \sigma(e) \sigma(e_{2,j_2+1})^2\cdots \sigma(e_{2,n_2})^2 e_{2,j_2+1}\cdots e_{2,n_2} e_{2,1}\cdots e_{2,j_2-1}.
	\end{align*}
Applying $\hat c$ yields
	\begin{align*}
		\hat c\circ \tilde{T}_2(u_1,u_2) = \sum_{e\in E} \sigma(e) [\D_e(C_{e_{1,1}}\cdots C_{e_{1,n_1}})][ \D_{e^\circ}(C_{e_{2,1}}\cdots C_{e_{2,n_2}})]= \sum_{e\in E} \sigma(e) (\D_e \hat c(u_1)) (\D_{e^\circ} \hat c(u_2)).
	\end{align*}
Using the multilinearity of each side we have for arbitrary $g\in Gr_0 \mc{P}^\Gamma$
	\begin{align*}
		\hat c\circ \tilde{T}_2(\Sigma g,\Sigma g) = \sum_{e\in E} \sigma(e)( \D_e \Sigma \hat c(g)) (\D_{e^\circ} \Sigma \hat c(g)),
	\end{align*}
and we note that the left-hand side is $\hat c\circ T_2(g)$.\par

Let
	\begin{equation*}
	\begin{tikzpicture}[thick, scale=.5]
		\node[left] at (3, 4.5) {$\tilde{T}_{3,m}(u_1,\ldots,u_m)=$};

		\draw[Box] (0,0) rectangle (2,1); \node at (1,.5) {$u_1$}; \node[marked, scale=.8, above left] at (0,1) {};
		\draw[thickline] (0.5,1) arc(0:60:.5cm) arc(240:90:.5cm) --++ (2.5,0) arc (90:0:.5cm) --++(0,-3.5);
		\draw[thick] (1,1) arc(180:0: 1.05cm and .9cm) --++(0,-1.3) arc(0:-90:.7cm) --++(-.05,0) arc(90:180:.7cm) --++(0,-1.05) arc(180:270:.5cm)--++(23.9,0) arc(-90:0:.5cm);
		\draw[thickline] (1.5,1) arc(180:0: .6 cm and .5cm) --++(0,-1) arc(0:-90:.5cm) --++(-2.4,0) arc(270:180:.5cm) --++(0,1) arc(180:120:.5cm) arc(-60:0:.5cm) --++ (0,1.5);
		\draw[thick] (-.05,2) arc(180:90: 1cm) --++(2.2,0) arc(90:0:1cm) --++(0,-2);

		\draw[Box] (5.5,0) rectangle (7.5,1); \node at (6.5,.5) {$u_2$}; \node[marked, scale=.8, above left] at (5.5,1) {};
		\draw[thickline] (6,1) arc(0:60:.5cm) arc(240:90:.5cm) --++ (2.5,0) arc (90:0:.5cm) --++(0,-3.5);
		\draw[thick] (6.5,1) arc(180:0: 1.05cm and .9cm) --++(0,-1.3) arc(0:-90:.7cm) --++(-2.75,0) arc(270:180:1cm);
		\draw[thickline] (7,1) arc(180:0: .6 cm and .5cm) --++(0,-1) arc(0:-90:.5cm) --++(-2.4,0) arc(270:180:.5cm) --++(0,1) arc(180:120:.5cm) arc(-60:0:.5cm) --++ (0,1.5);
		\draw[thick] (5.45,2) arc(180:90: 1cm) --++(2.2,0) arc(90:0:1cm) --++(0,-2) arc(180:270:1cm);

		\draw[thick, dotted] (10.65,-1)--++(1,0);

		\node at (12.1,1) {$\cdots$};

		\draw[thick, dotted] (13.55,3) --++(-1,0);

		\draw[Box] (16,0) rectangle (18,1); \node at (17,.5) {$u_{m-1}$}; \node[marked, scale=.8, above left] at (16,1) {};
		\draw[thickline] (16.5,1) arc(0:60:.5cm) arc(240:90:.5cm) --++ (2.5,0) arc (90:0:.5cm) --++(0,-3.5);
		\draw[thick] (17,1) arc(180:0: 1.05cm and .9cm) --++(0,-1.3) arc(0:-90:.7cm) --++(-2.75,0) arc(270:180:1cm)--++(0,2) arc(0:90:1cm);
		\draw[thickline] (17.5,1) arc(180:0: .6 cm and .5cm) --++(0,-1) arc(0:-90:.5cm) --++(-2.4,0) arc(270:180:.5cm) --++(0,1) arc(180:120:.5cm) arc(-60:0:.5cm) --++ (0,1.5);
		\draw[thick] (15.95,2) arc(180:90: 1cm) --++(2.2,0) arc(90:0:1cm) --++(0,-2);

		\draw[Box] (21.5,0) rectangle (23.5,1); \node at (22.5,.5) {$u_m$}; \node[marked, scale=.8, above left] at (21.5,1) {};
		\draw[thickline] (22,1) arc(0:60:.5cm) arc(240:90:.5cm) --++ (2.5,0) arc (90:0:.5cm) --++(0,-3.5);
		\draw[thick] (22.5,1) arc(180:0: 1.05cm and .9cm) --++(0,-1.3) arc(0:-90:.7cm) --++(-2.75,0) arc(270:180:1cm);
		\draw[thickline] (23,1) arc(180:0: .6 cm and .5cm) --++(0,-1) arc(0:-90:.5cm) --++(-2.4,0) arc(270:180:.5cm) --++(0,1) arc(180:120:.5cm) arc(-60:0:.5cm) --++ (0,1.5);
		\draw[thick] (21.45,2) arc(180:90: 1cm) --++(3.1,0) arc(90:0:1cm) --++(0,-4.76);

		\draw[Box] (2.5,-2.6) rectangle (26,-1.6); \node at (14.25,-2.1) {$TL_\infty$}; \node[marked, scale=.8, above left] at (2.5,-1.6) {};
		
		\node[right] at (26.7,-2.3) {.};

	\end{tikzpicture}
	\end{equation*}
We claim that
\begin{align*}
\hat c\circ\tilde{T}_{3,m}&(u_1,\ldots,u_m)\\
&= (1\otimes\varphi)\circ\text{Tr}\left( \left[ U\frac{2 A^{-1}}{1+A} U^{T}\right]^{-1} \J_c\D_c \hat c(u_1) \#\J_c C\#\J_c\D_c \hat c(u_2)\#\cdots \#\J_c C\# \J_c\D_c \hat c(u_m) \right).
\end{align*}
Assume each $u_l$, $l\in\{1,\ldots, m\}$, is the delta function supported on the loop $e_{l,1}\cdots e_{l,n_l}$. Note that because of (\ref{derivative_of_C}), for each $l=1,\ldots, m-1$ and $e,f\in E$ we have
\begin{align}
\left[\J_c C \# \J_c\D_c \hat c(u_l)\right]_{ef}=& \sigma(e^\circ) [\J_c\D_c \hat c(u_l)]_{e^\circ f} = \sigma(e^\circ) \partial_f \D_{e^\circ} \hat c(u_l)\nonumber\\
=&\sigma(e^\circ) \sum_{\substack{1\leq j_l,i_l\leq n\\ j_l\neq i_l}} \sigma(e)\delta_{e_{l,j_l}=e}\sigma(f)\delta_{e_{l,i_l}=f^\circ}\left(\prod_{k=j_l+1}^{n_l} \sigma(e_{l,k})^2\right)\nonumber\\
&\times \hat c(e_{l,j_l+1}\cdots e_{l,i_l-1})\otimes \hat c(e_{l,i_l+1}\cdots e_{l,j_l-1})\nonumber\\
=& \sum_{\substack{1\leq j_l,i_l\leq n\\ j_l\neq i_l}} \delta_{e=e_{l,j_l}}\sigma(e_{l,j_l+1})^2\cdots \sigma(e_{l,n_l})^2 \sigma(e_{l,i_l}^\circ)\delta_{e_{l,i_l}=f^\circ} \label{back_terms}\\
&\times \hat c(e_{l,j_l+1}\cdots e_{l,i_l-1})\otimes \hat c(e_{l,i_l+1}\cdots e_{l,j_l-1}).\nonumber
\end{align}
Also, it follows from a simple computation (similar to \ref{linear_relation2}) that
\begin{align*}
\left(\left[ U\frac{2A^{-1}}{1+A} U^{T}\right]^{-1}\right)(e) = \left(	\begin{array}{cc}
0	&	\sigma(e^\circ)^3\\
\sigma(e)^3	&	0\end{array}\right)\qquad e\in E_+,
\end{align*}
so that
\begin{align}
\left[\left(U\frac{2A^{-1}}{1+A} U^{T}\right)^{-1}\#\J_c\D_c \hat c(u_1)\right]_{ef} =& \sigma(e^\circ)^3 [\J_c \D_c \hat c(u_1)]_{e^\circ f} = \sigma(e^\circ)^3 \partial_f \D_{e^\circ} \hat c(u_1) \nonumber\\
=& \sum_{\substack{1\leq j_1,i_1\leq n\\ j_1\neq i_1}} \sigma(e_{1,j_1}^\circ)^2 \delta_{e=e_{1,j_1}}\sigma(e_{1,j_1+1})^2\cdots \sigma(e_{1,n_1})^2 \sigma(e_{1,i_1}^\circ)\delta_{e_{1,i_1}=f^\circ} \label{front_term}\\
&\times \hat c(e_{1,j_1+1}\cdots e_{1,i_1-1})\otimes \hat c(e_{1,i_1+1}\cdots e_{1,j_1-1}).\nonumber
\end{align}
Now
\begin{align*}
\tilde{T}_{3,m}(u_1,\ldots, u_m)= \sum_{j_1=1}^{n_1} \cdots \sum_{j_m=1}^{n_m} \sum_{i_1\neq j_1,\ldots, i_m\neq j_m} &\left[\prod_{l=1}^{m-1} \sigma(e_{l,j_l+1})^2\cdots\sigma(e_{l,n_l})^2 \sigma(e_{l,i_l}^\circ) \delta_{e_{l,i_l}=e_{l+1,j_{l+1}}^\circ} \right]\\
&\times\sigma(e_{m,j_m+1})^2\cdots \sigma(e_{m,n_m})^2 \sigma(e_{m,i_m}) \delta_{e_{m,i_m}=e_{1,j_1}^\circ}\\
&\times e_{1,j_1+1}\cdots e_{1,i_1-1}\cdots e_{m,j_m+1}\cdots e_{m,i_m-1}\\
&\times \left[Tr_0(e_{m,i_m+1}\cdots e_{m,j_m-1} \cdots e_{1,i_1+1}\cdots e_{1,j_1-1})\right](s(e_{m,i_m+1})).
\end{align*}
We make the substitution $\sigma(e_{m,i_m})\delta_{e_{m,i_m}=e_{1,j_1}^\circ}=\sigma(e_{m,i_m}^\circ)\delta_{e_{m,i_m}=e_{1,j_1}^\circ}\sigma(e_{1,j_1}^\circ)^2$ , and then group the factors $\sigma(e_{1,j_1}^\circ)^2\delta_{e_{m,i_m}^\circ=e_{1,j_1}}$ with the factor corresponding to $l=1$ in the scalar product in the above equation. Also, we group the factor $\delta_{e_{l,i_l}=e_{l+1,j_{l+1}}^\circ}=\delta_{e_{l,i_l}^\circ=e_{l+1,j_{l+1}}}$ with the factor corresponding to $l+1$ rather than $l$. Finally, recall that if $u$ starts at $v$ then $[Tr_0(u)](v)=\phi_v(c(u))=\varphi(\hat{c}(u))$. With these remarks we have
	\begin{align*}
		\tilde{T}_{3,m}(u_1,\ldots, u_m)=(1\otimes [\varphi\circ\hat{c}])\left( \sum_{\substack{1\leq j_1,i_1\leq n\\ j_1\neq i_1}}\right. \sigma(e_{1,j_1}^\circ)^2 \delta_{e_{m,i_m}^\circ=e_{1,j_1}}&\sigma(e_{1,j_1+1})^2\cdots \sigma(e_{1,n_1})^2 \sigma(e_{1,i_1}^\circ)\\
			\times &(e_{1,j_1+1}\cdots e_{1,i_1-1}\otimes e_{1,i_1+1}\cdots e_{1,j_1-1}\\
\mathop{\text{\LARGE{\#}}}_{l=2}^m \left[ \sum_{\substack{1\leq j_l,i_l\leq n\\ j_l\neq i_l}}\right. \delta_{e_{l-1,i_{l-1}}^\circ=e_{l,j_l}}&\sigma(e_{l,j_l+1})^2\cdots \sigma(e_{l,n_l})^2 \sigma(e_{l,i_l}^\circ)\\
			\times&\left.\left. \vphantom{\prod_{l=2}^m \sum_{\substack{1\leq j_l,i_l\leq n\\ j_l\neq i_l}} \delta_{e=e_{l,j_l}}} e_{l,j_l+1}\cdots e_{l,i_l-1}\otimes e_{l,i_l+1}\cdots e_{l,j_l-1}\right]\right).
	\end{align*}
Applying $\hat c$ and comparing this to (\ref{back_terms}) and (\ref{front_term}) demonstrates the claimed equivalence. Then using the multilinearity of each side to replace $u_l$ with $\Sigma g$ for each $l=1,\ldots, m$ shows
\begin{align*}
\hat c\circ T_{3,m}(g)=(1\otimes \varphi)\circ\text{Tr}\left(\left[U \frac{2A^{-1}}{1+A} U^\text{T}\right]^{-1} \J_c\D_c \Sigma\hat c(g)\# \left(\J_c C \# \J_c \D_c \Sigma \hat c(g)\right)^{m-1}\right).
\end{align*}
A similar argument demonstrates
\begin{align*}
\hat c\circ T_{4,m}(g)=(\varphi\otimes 1)\circ\text{Tr}\left(\left[U\frac{2A}{1+A} U^{\text{T}}\right]^{-1} \J_c\D_c \Sigma \hat c(g) \# \left(\J_c C \# \J_c\D_c \Sigma \hat c(g)\right)^{m-1}\right).
\end{align*}
Finally, a term by term comparison then yields the equivalence $F\circ \hat c=\hat c\circ T$ on $\{g\in (Gr_0 \mc{P}^\Gamma)^{(R',\sigma)}_{c.s.}\colon \|g\|_{R',\sigma}\leq 1\}$.
\end{proof}

Using (\ref{linear_relation_derivative1}), (\ref{linear_relation3}), and (\ref{linear_relation_derivative2}) it is not hard to see that the map $F$ defined in Lemma \ref{F_is_a_tangle} is equivalent to the map considered in Corollary 3.14 of \cite{N13}. However, in the latter map $W$ is being thought of as a polynomial in the $X_e$ (for the purposes of composing with $X+\D G$).

Corollary 3.18 of \cite{N13} (with $N=|E|$) then says that there is constant $\epsilon>0$ so that if $W=\hat{c}(w)$ for $w\in (Gr_0\mc{P})^{(R'+1,\sigma)}_{c.s.}$ with $\|w\|_{R'+1,\sigma}<\epsilon$ then there exists $G\in \P^{(R',\sigma)}_{c.s.}$ so that the joint law of the $N$-tuple $Y=X+\D G$ is the free Gibbs state with potential $V_0+W$. By (\ref{circular_schwinger-dyson}) this is equivalent to joint law of the $N$-tuple $C+\D_c G$ satisfying the Schwinger-Dyson equation with potential $V_0+W$, but with the differential operators $\D_c$ and $\J_c$. That is,
\begin{align}\label{translated_S-D}
\varphi\left((C_e+\D_e G)\cdot Q(C+\D_c G)\right)=& \varphi\otimes\varphi^{op}\left([\partial_e Q](C+\D_c G)\right)\nonumber\\
&-\varphi\left([\D_e W](C+\D_c G)\cdot Q(C+\D_c G)\right),
\end{align}
where here $Q(P)$ for $Q\in \P^{(R)}$ and $P\in(\P^{(R)})^{|E|}$ means $Q$ evaluated as a power series in the $C_e$ at $C_e=P_e$.

This $G=\Sigma \hat{G}$ where $\hat{G}$ is the $\|\cdot\|_{R',\sigma}$-norm limit of the sequence $G_k=(\mathscr{S}\Pi F)^k(W)$. Thus if we define $g_k=(\mathscr{S}\Pi T)^k(w)$, then $G_k=\hat{c}(g_k)$ by Lemma \ref{F_is_a_tangle} and hence the $\|\cdot\|_{R',\sigma}$-norm limit $\hat{g}$ of the sequence $(g_k)_{k\in \N}$ satisfies $\hat{c}(\hat{g})=\hat{G}$. Let $g=\Sigma \hat{g}$. Additionally, we note that $\|g\|_{R',\sigma}$ and $\|\hat{g}\|_{R',\sigma}$ both tend to zero as $\|v-v_0\|_{R'+1,\sigma}\to 0$. This follows from Corollary 3.15(v) in \cite{N13} (specifically the last paragraph of the proof).

\begin{defi}
The element $g\in (Gr_0\mc{P})^{(R',\sigma)}_{c.s.}$ is called the \emph{transport element from $v_0$ to $v$}.
\end{defi}

Define $\eta\colon Gr_0 \mc{P} \to Gr_0[[ \mc{P}]]$ by
\[
\begin{tikzpicture}[thick, scale=.5]
\node[left] at (-1,2) {$\eta(x)=$};

\draw[Box] (0, 3) rectangle (3,4);
\node at (1.5,3.5) {$v_0 + g$};
\node[marked, scale=.8, above left] at (0,4) {};
\draw[thickline] (0.75, 4) --++(0,1.5);
\draw[thick] (1.5, 4) arc(180:0: 1.3cm and 1cm) --+(0,-1) arc (0:-90: 1.2 cm and 1cm) -- (2, 2) arc(90:180:.5cm);
\draw[thickline] (2.25,4) arc(180:0: .6cm and .5cm) --++(0,-1) arc(0:-90:.5cm) --++(-3,0) arc(-90:-180:.5cm) --++(0,2.5);

\node at (5.25,3.75) {$\cdots$};

\draw[Box] (7, 3) rectangle (10,4);
\node at (8.5,3.5) {$v_0 + g$};
\node[marked, scale=.8, above left] at (7,4) {};
\draw[thickline] (7.75, 4) --++(0,1.5);
\draw[thick] (8.5, 4) arc(180:0: 1.3cm and 1cm) --++(0,-1) arc (0:-90: 1.2 cm and 1cm) -- (8.75, 2) arc(90:180:.5cm);
\draw[thickline] (9.25,4) arc(180:0: .6cm and .5cm) --++(0,-1) arc(0:-90:.5cm) --++(-3,0) arc(-90:-180:.5cm) --++(0,2.5);

\draw[Box] (1,.5) rectangle (9,1.5);
\node at (5,1) {$x$};
\node[marked,scale=.8, above left] at (1,1.5) {};
\node[below right] at (9,1) {,};
\end{tikzpicture}
\]
where the number of discs containing  $v_0 + g$ varies according to the components of $x$ and for each such disc we sum over the boundary points connecting to $x$. From Lemma \ref{composing_cyclic_derivative} it follows that $\hat{c}\circ\eta(x)=[\hat{c}(x)](C+\D_c G)$.

Moreover, we claim $\eta(x)\in (Gr_0\mc{P})^{(R)}$ for each $x\in Gr_0\mc{P}$. Fix $x\in Gr_0\mc{P}$. Since $g\in (Gr_0\mc{P})^{(R')}$, there is a sequence $\{h_n\}_{n\in \N}\subset Gr_0\mc{P}$ so that $\|g-h_n\|_{R'}\to 0$. Let
\[
\begin{tikzpicture}[thick, scale=.5]
\node[left] at (-1,2) {$x_n=$};

\draw[Box] (0, 3) rectangle (3,4);
\node at (1.5,3.5) {$v_0 + h_n$};
\node[marked, scale=.8, above left] at (0,4) {};
\draw[thickline] (0.75, 4) --++(0,1.5);
\draw[thick] (1.5, 4) arc(180:0: 1.3cm and 1cm) --+(0,-1) arc (0:-90: 1.2 cm and 1cm) -- (2, 2) arc(90:180:.5cm);
\draw[thickline] (2.25,4) arc(180:0: .6cm and .5cm) --++(0,-1) arc(0:-90:.5cm) --++(-3,0) arc(-90:-180:.5cm) --++(0,2.5);

\node at (5.25,3.75) {$\cdots$};

\draw[Box] (7, 3) rectangle (10,4);
\node at (8.5,3.5) {$v_0 + h_n$};
\node[marked, scale=.8, above left] at (7,4) {};
\draw[thickline] (7.75, 4) --++(0,1.5);
\draw[thick] (8.5, 4) arc(180:0: 1.3cm and 1cm) --++(0,-1) arc (0:-90: 1.2 cm and 1cm) -- (8.75, 2) arc(90:180:.5cm);
\draw[thickline] (9.25,4) arc(180:0: .6cm and .5cm) --++(0,-1) arc(0:-90:.5cm) --++(-3,0) arc(-90:-180:.5cm) --++(0,2.5);

\draw[Box] (1,.5) rectangle (9,1.5);
\node at (5,1) {$x$};
\node[marked,scale=.8, above left] at (1,1.5) {};
\node[below right] at (9,1) {,};
\end{tikzpicture}
\]
then $x_n\in Gr_0\mc{P}$ and $\eta(x)$ is the $\|\cdot\|_R$-limit of the $x_n$ by Lemma 2.5 in \cite{N13}.

It is clear that the element associated to $\varphi\circ\hat{c}\circ\eta$ via the duality in (\ref{sum_pairing}) is
\[
\begin{tikzpicture}[thick, scale=.5]
\node[left] at (-1,3.5) {$TL_\infty^{(v)}=$};
\draw[Box] (-1,5.5) rectangle (8.25,6.5);
\node at (3.625,6) {$TL_\infty$};
\node[marked, scale=.8, above left] at (-1,6.5) {};

\draw[Box] (0, 3) rectangle (3,4);
\node at (1.5,3.5) {$v_0 + g$};
\node[marked, scale=.8, above left] at (0,4) {};
\draw[thickline] (0.75, 4) --++(0,1.5);
\draw[thick] (1.5, 4) arc(180:0: 1.3cm and 1cm) --+(0,-1) arc (0:-90: 1.2 cm and 1cm) -- (2, 2) arc(90:180:.5cm);
\draw[thickline] (2.25,4) arc(180:0: .6cm and .5cm) --++(0,-1) arc(0:-90:.5cm) --++(-3,0) arc(-90:-180:.5cm) --++(0,2.5);

\node at (5.25,3.75) {$\cdots$};

\draw[Box] (7, 3) rectangle (10,4);
\node at (8.5,3.5) {$v_0 + g$};
\node[marked, scale=.8, above left] at (7,4) {};
\draw[thickline] (7.75, 4) --++(0,1.5);
\draw[thick] (8.5, 4) arc(180:0: 1.3cm and 1cm) --++(0,-1) arc (0:-90: 1.2 cm and 1cm) -- (8.75, 2) arc(90:180:.5cm);
\draw[thickline] (9.25,4) arc(180:0: .6cm and .5cm) --++(0,-1) arc(0:-90:.5cm) --++(-3,0) arc(-90:-180:.5cm) --++(0,2.5);
\node[right] at (11,3.5) {$\in Gr_0[[\mc{P}]]$,};

\end{tikzpicture}
\]
where we sum over the number of input discs containing $v_+ g$, and for each disc we sum over the boundary point connected to the bottom of the diagram. Define
\[
Tr_0^{(v)}(x):=\<TL_\infty^{(v)},x\>_{\mc{P}}
\]
(we note $TL_\infty^{(v)}={TL_\infty^{(v)}}^*$ since $v_0$, $g$, and $TL_\infty$ are all self-adjoint), then $Tr_0^{(v)}=Tr_0\circ\eta$.

The above observations and Lemma \ref{S-D_planar_tangle_lemma} immediately imply the following proposition.

\begin{prop}\label{solving_S-D_planar_tangle}
There exists $\epsilon>0$ such that when $v\in (Gr_0\mc{P})^{(R'+1,\sigma)}_{c.s.}$ satisfies $\|v-v_0\|_{R'+1,\sigma}<\epsilon$, there is $g\in (Gr_0\mc{P})^{(R',\sigma)}_{c.s.}$ so that $TL_\infty^{(v)}\in Gr_0[[\mc{P}]]$ defined as above satisfies the Schwinger-Dyson planar tangle.

Moreover, the map $\hat{c}\circ\eta$ sends $Gr_0\mc{P}^\Gamma$ to a subalgebra of $W^*(C_e+\D_e\hat{c}(g)\colon e\in E)$. The joint law of the generators $\{C_e + \D_e\hat{c}(g)\}_{e\in E}$ with respect to the free quasi-free state $\varphi$ is the free Gibbs state with potential $[\hat{c}(v)](C+\D_c\hat{c}(g)) = \hat{c}\circ\eta(v)$.
\end{prop}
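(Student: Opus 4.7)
The proposition consolidates the construction carried out in the preceding paragraphs, so my plan is essentially to (i) invoke the free-probabilistic transport of \cite{N13} to realize the Gibbs state, (ii) translate back to the planar-algebra picture via Lemma \ref{F_is_a_tangle}, and (iii) apply Lemma \ref{S-D_planar_tangle_lemma} to the resulting state.

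Pick $\epsilon>0$ small enough to apply Corollary 3.18 of \cite{N13} with $N=|E|$: for $v\in (Gr_0\mc{P})^{(R'+1,\sigma)}_{c.s.}$ with $\|v-v_0\|_{R'+1,\sigma}<\epsilon$, set $W=\hat{c}(v-v_0)\in \P^{(R'+1,\sigma)}_{c.s.}$, which satisfies $\|W\|_{R'+1,\sigma}=\|v-v_0\|_{R'+1,\sigma}<\epsilon$. The cited corollary then supplies $G\in \P^{(R',\sigma)}_{c.s.}$ such that the joint law $\psi$ of $\{C_e+\D_e G\}_{e\in E}$ with respect to $\varphi$ is the free Gibbs state with potential $V_0+W=\hat{c}(v)$. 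The discussion preceding the proposition, combined with Lemma \ref{F_is_a_tangle} and the analytic freeness of Lemma \ref{analytically_free}, already shows that the iterative construction of $G$ descends through $\hat{c}$ to produce $g\in (Gr_0\mc{P})^{(R',\sigma)}_{c.s.}$ with $\hat{c}(g)=G$, and hence that $TL_\infty^{(v)}\in Gr_0[[\mc{P}]]$ is well-defined. Extending $\eta$ tangle-by-tangle to a map $\tilde{\eta}\colon Gr_0\mc{P}^\Gamma\to Gr_0[[\mc{P}^\Gamma]]$, Lemma \ref{composing_cyclic_derivative} (iterated) yields $\hat{c}\circ\tilde{\eta}(x)=[\hat{c}(x)](C+\D_c G)$ for every $x\in Gr_0\mc{P}^\Gamma$.

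The substantive step is to verify the hypothesis of Lemma \ref{S-D_planar_tangle_lemma} that the element $f\in Gr_0[[\mc{P}^\Gamma]]$ associated to $\psi$ via the duality (\ref{sum_pairing}) in fact belongs to the subspace $Gr_0[[\mc{P}]]$ (and equals $TL_\infty^{(v)}$). For every $x\in Gr_0\mc{P}^\Gamma$,
\[
\psi\circ\hat{c}(x)=\varphi\bigl([\hat{c}(x)](C+\D_c G)\bigr)=\varphi\circ\hat{c}\circ\tilde{\eta}(x)=\sum_{v\in V}\bigl[\<TL_\infty^{\mc{P}^\Gamma},\tilde{\eta}(x)\>_{\mc{P}^\Gamma}\bigr](v),
\]
where the last equality is (\ref{trace_relation}) applied within $\mc{P}^\Gamma$ (equivalently, $\varphi\circ\hat{c}$ is represented by $TL_\infty^{\mc{P}^\Gamma}$ under (\ref{sum_pairing})). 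Because $\tilde{\eta}$ is a planar tangle built out of copies of $v_0+g\in Gr_0\mc{P}$ (embedded via $i$), isotopy invariance allows the $TL_\infty^{\mc{P}^\Gamma}$-disc to be absorbed into the $\tilde{\eta}$-tangle; since Temperley-Lieb is canonically embedded in $\mc{P}$, the resulting composite tangle lies in $i(Gr_0[[\mc{P}]])$ and, by inspection, is precisely the pairing against the embedded image of $TL_\infty^{(v)}$. This pictorial identification---that the composite really factors through $\mc{P}$ rather than merely through $\mc{P}^\Gamma$---is the main obstacle and the only place where the subfactor planar algebra structure of $\mc{P}$ is used essentially.

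With $f=TL_\infty^{(v)}\in Gr_0[[\mc{P}]]$ identified, Lemma \ref{S-D_planar_tangle_lemma} immediately delivers the Schwinger-Dyson planar tangle for $TL_\infty^{(v)}$ with potential $v$. The \emph{moreover} assertions are then read off: $\hat{c}\circ\tilde{\eta}$ is a $*$-algebra map (as a composition of the planar-tangle map $\tilde{\eta}$ with the $*$-homomorphism $\hat{c}$) whose image lies in $W^*(C_e+\D_e G:e\in E)$ by the formula recalled above; the free-Gibbs property of the generators $\{C_e+\D_e G\}_{e\in E}$ is exactly the output of Corollary 3.18 of \cite{N13}; and the identification $[\hat{c}(v)](C+\D_c G)=\hat{c}\circ\tilde{\eta}(v)$ is once again Lemma \ref{composing_cyclic_derivative}.
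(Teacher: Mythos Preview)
Your proof is correct and follows essentially the same route as the paper: invoke Corollary~3.18 of \cite{N13}, use Lemma~\ref{F_is_a_tangle} to pull $G$ back to $g\in(Gr_0\mc{P})^{(R',\sigma)}_{c.s.}$, and apply Lemma~\ref{S-D_planar_tangle_lemma}. The only discrepancy is one of emphasis: you treat the identification $f=TL_\infty^{(v)}\in Gr_0[[\mc{P}]]$ as ``the main obstacle,'' going through an extension $\tilde\eta$ on $Gr_0\mc{P}^\Gamma$ and an isotopy argument, whereas the paper regards it as immediate---$TL_\infty^{(v)}$ is \emph{defined} as a tangle built from $TL_\infty$ and $v_0+g$, all of which already live in $\mc{P}$, and the naturality axiom for planar tangles gives $\varphi\circ\hat c\circ\eta(x)=\langle TL_\infty,\eta(x)\rangle=\langle TL_\infty^{(v)},x\rangle$ directly. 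No special feature of the subfactor structure is needed for this step beyond the fact that the ingredients lie in $\mc{P}$.
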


\begin{rem}
The Schwinger-Dyson planar tangle on $Gr_0 \mc{P}$ was solved in Proposition 2 of \cite{GJSZJ12} for potentials of the form $v_0+\sum_{i=1}^k t_i B_i$, $B_1,\ldots, B_k\in Gr_0 \mc{P}$ with $\sum_i |t_i|$ small. Proposition \ref{solving_S-D_planar_tangle} extends this to $B_1,\ldots, B_k\in (Gr_0\mc{P})^{(R'+1,\sigma)}$ despite its requirement that $B_1,\ldots, B_k$ are invariant under $\rho$.

Indeed, let $v =v_0+ \sum_{i=1}^k t_i B_i$, with $B_1,\ldots , B_k\in (Gr_0\mc{P})^{(R'+1,\sigma)}$ and $\sum_{i=1}^k |t_i|$ small. Since elements of $(Gr_0\mc{P})^{(R'+1,\sigma)}$ are automatically invariant under $\sigma_{-i}^\varphi$ (simply because the planar tangle
\[
\begin{tikzpicture}[thick, scale=.5]
\draw[Box] (0,0) rectangle (1.5,1);
\draw[thickline] (.75,1) arc(180:0: .6cm and .5cm) --++(0,-1) arc(0:-90:.5cm) --++(-1.5,0) arc(-90:-180:.5cm) --++(0,1.6);

\end{tikzpicture}
\]
is isotopically equivalent to the identity planar tangle), $\tilde{v}:=\mathscr{S}(v)\in (Gr_0\mc{P})^{(R'+1,\sigma)}_{c.s.}$ is invariant under $\rho$. So we apply Proposition \ref{solving_S-D_planar_tangle} to $\tilde{v}$ to obtain $TL_\infty^{(\tilde{v})}\in Gr_0[[\mc{P}]]$ satisfying the Schwinger-Dyson planar tangle with potential $\tilde{v}$. But then Lemma 2.6 in \cite{N13} implies $\D_c\mathscr{S}\hat{c}(v-v_0)=\D_c\hat{c}(v-v_0)$. So using Lemma \ref{composing_cyclic_derivative} to translate this to planar tangles we see that we can simply replace $\tilde{v}$ with $v$ in the Schwinger-Dyson planar tangle, and hence $TL_\infty^{(\tilde{v})}$ also satisfies the Schwinger-Dyson planar tangle with potential $v$.
\end{rem}

\subsection{Equality of non-commutative probability spaces}

Using $\hat{c}$ to realize $Gr_0\mc{P}$ as a subalgebra of $M$, we let $M_0=W^*(\hat{c}(Gr_0\mc{P}))\subset M$ and $M_{0,\pm}=W^*(\hat{c}(Gr_0^\pm \mc{P}))$. Note that by our choice of $R\geq 4\delta^\frac{1}{2}$, $\|\cdot\|_S$ dominates the operator norm for any $S\geq R$ and therefore $(Gr_0\mc{P})^{(S)}\subset M_0$ for every $S\geq R$. Thus, for $v\in (Gr_0\mc{P})^{(R'+1,\sigma)}_{c.s.}$ with $\|v-v_0\|_{R',\sigma}<\epsilon$ ($\epsilon$ as in Proposition \ref{solving_S-D_planar_tangle}) we have $\eta(x) \in (Gr_0\mc{P})^{(R)}\subset M_0$ for each $x\in Gr_0\mc{P}$. Consider $M_0^{(v)}=W^*(\hat{c}\circ\eta(Gr_0\mc{P}))\subset M_0$ and $M_{0,\pm}^{(v)} =W^*(\hat{c}\circ\eta(Gr_0^\pm\mc{P}))$. In this section we show that by making $\epsilon$ smaller if necessary we have $M_0=M_0^{(v)}$.

\begin{lem}\label{loopify}
Let $R>0$. For $H\in \left(\P^{(R)}\right)^{|E|}$, define
	\begin{align*}
		L(H):= (\J_c C\#H)\# C = \sum_{e\in E} \sigma(e) H_e C_{e^\circ}.
	\end{align*}
Suppose $h\in (Gr_0 \mc{P})^{(R)}$ with zero $\mc{P}_0$ component embeds into $M$ as
	\begin{align*}
		\hat{c}(h)=\sum_{e\in E}\sum_{ue\in L} \beta_h(ue) \hat{c}(u)C_e,
	\end{align*}
and define $H\in \left(\P^{(R)}\right)^{|E|}$ by
	\begin{align*}
		H_e = \sum_{ue^\circ\in L} \sigma(e^\circ)\beta_h(ue^\circ) \hat{c}(u).
	\end{align*}
Then $L(H) = \hat{c}(h)$ and for $u_1 e u_2\in L$ ($e\in E$) we have
\[
\begin{tikzpicture}[thick, scale=.5]
\draw[Box] (0,0) rectangle (5,1);
\node at (2.5,0.5) {$e$};
\node at (1,0.5) {$u_1$};
\node at (4,0.5) {$u_2$};
\node[marked, scale=.8, above left] at (0,1) {};
\draw[thickline] (1,1) --++ (0,2);
\draw[thickline] (4,1) --++ (0,2);
\draw[Box] (2,1.5) rectangle (3,2.5);
\node at (2.5,2) {$h$};
\node[marked, scale=.8, above left] at (2,2.5) {};
\draw[thickline] (2.4,2.5) --++(0,0.5);
\draw[thick] (2.75,2.5) arc(180:0:0.25 cm) --++ (0,-1) arc(360:270:0.25) --++ (-0.25,0) arc(90:180:0.25);
\node[right] at (5, 1.5) {$\stackrel{\hat c}{\longmapsto}$};
\node[right] at (6.5, 1.5) {$\hat c(u_1) H_e \hat c(u_2).$};
\end{tikzpicture}
\]
\end{lem}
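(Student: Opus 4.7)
The statement has two parts—an algebraic identity $L(H)=\hat{c}(h)$ and the planar tangle interpretation of inserting $h$ at position $e$—which I would prove in sequence.

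For the identity $L(H)=\hat{c}(h)$, the argument is pure bookkeeping: substitute the definition of $H_e$ into $L(H)=\sum_{e\in E}\sigma(e)H_e C_{e^\circ}$ to obtain
\[
L(H)=\sum_{e\in E}\sum_{ue^\circ\in L}\sigma(e)\sigma(e^\circ)\beta_h(ue^\circ)\hat{c}(u)C_{e^\circ}.
\]
The scalar factors collapse via $\sigma(e)\sigma(e^\circ)=1$, and reindexing the outer sum by $f:=e^\circ$ returns $\sum_f\sum_{uf\in L}\beta_h(uf)\hat{c}(u)C_f=\hat{c}(h)$. The hypothesis $h\in(Gr_0\mc{P})^{(R)}$ makes this formal rearrangement legitimate since the sum converges absolutely in $\|\cdot\|_R$.

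For the planar tangle claim, I would verify the equality on delta functions on single loops $h=vf\in L$ and extend to general $h$ by multilinearity of both $Z_T$ and $\hat{c}$, summing over the support of $h$ with convergence in $\|\cdot\|_R$ ensured by the hypothesis. For $h=vf$ the evaluation rules for $\mc{P}^\Gamma$ require opposite edge labels on the curved string connecting $h$ to position $e$ of the bottom disc, so the tangle vanishes unless $f=e^\circ$—which is exactly the matching condition of $H_e$. Given $f=e^\circ$, the path $v$ necessarily runs from $s(e)$ to $t(e)$, the output loop is the concatenation $u_1 v u_2$, and the only nontrivial scalar weight comes from the winding angle of the single curved strand.

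The key remaining computation is that this winding angle equals $\theta_\gamma=-\pi$, which produces the weight $\left(\mu(t(f))/\mu(s(f))\right)^{1/2}=\sigma(e^\circ)$; I expect this to be the main technical obstacle, since it is where the specific normalization in the definition of $H_e$ is forced. I would dispatch it by isotopy, straightening the curved strand into a canonical half-turn from the top of $h$ down into position $e$, after which the winding calculation reduces to the standard conventions for $Z_T$ from Section~\ref{Planar_algebras}. Assembled, the image of the single-loop tangle under $\hat{c}$ is $\sigma(e^\circ)\hat{c}(u_1)\hat{c}(v)\hat{c}(u_2)=\hat{c}(u_1)H_e\hat{c}(u_2)$, and the general case follows by multilinearity.
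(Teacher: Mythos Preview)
Your proposal is correct and follows essentially the same approach as the paper's proof. The paper dispatches both claims in two sentences—$L(H)=\hat c(h)$ ``follows immediately from the definition'' and the tangle claim holds because ``the string connecting $h$ to $e$ must have $e^\circ$ as its endpoint in $h$ and contributes a factor of $\sigma(e^{\circ})$''—while you unpack the same computation (the $\sigma(e)\sigma(e^\circ)=1$ cancellation for the first part, and the $-\pi$ winding angle producing $\sigma(e^\circ)$ for the second) in explicit detail.
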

\begin{proof}
The assertion $L(H)=\hat c(h)$ follows immediately from the definition of $H$ and $L(H)$. To see that the output of the planar tangle embeds as stated, one simply notes that the string connecting $h$ to $e$ must have $e^\circ$ as its endpoint in $h$ and contributes a factor of $\sigma(e^{\circ})$ to the tangle.
\end{proof}

\begin{thm}\label{vNas_are_equal}
There exists a constant $\epsilon>0$ so that for $v\in (Gr_0 \mc{P})_{c.s.}^{(R'+1,\sigma)}$ with $\|v-v_0\|_{R'+1,\sigma}<\epsilon$, $M_0=M_0^{(v)}$. Moreover, there exists a $*$-automorphism of $M$ which fixes $M_0$ and takes the free Gibbs state with potential $\hat{c}(v_0)$ to the free Gibbs state with potential $\hat{c}(v)$.
\end{thm}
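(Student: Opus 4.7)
The plan is to establish the two inclusions $M_0^{(v)} \subseteq M_0$ and $M_0 \subseteq M_0^{(v)}$ separately, and then to deduce the automorphism from the free transport results of \cite{N13}. The first inclusion is essentially immediate: for every $x \in Gr_0\mc{P}$, we have already observed that $\eta(x) \in (Gr_0\mc{P})^{(R)}$, and since the $\|\cdot\|_R$-norm dominates the operator norm while $M_0$ is operator-norm closed, $\hat{c}\circ\eta(x)$ lies in $M_0$. The bulk of the work is the reverse inclusion.

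For $M_0 \subseteq M_0^{(v)}$, the idea is to exploit that the map $\eta$, extended by the same planar-tangle definition to a bounded endomorphism $\tilde\eta$ of an appropriate Banach completion of $Gr_0\mc{P}$ (for instance $(Gr_0\mc{P})^{(R)}$ or $(Gr_0\mc{P})^{(R,\sigma)}$), is a small perturbation of the identity whenever $\|g\|_{R',\sigma}$ is small. Indeed, $\tilde\eta - \mathrm{id}$ corresponds to the tangle defining $\eta$ but with at least one insertion of the disc labeled $g$ in place of $v_0$; using the composition and substitution estimates of \cite{N13} together with the fact (noted after Proposition \ref{solving_S-D_planar_tangle}) that $\|g\|_{R',\sigma} \to 0$ as $\|v-v_0\|_{R'+1,\sigma}\to 0$, we can force $\|\tilde\eta - \mathrm{id}\|_{\mathrm{op}} < 1$ by shrinking $\epsilon$. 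A Neumann series then yields a bounded inverse $\tilde\eta^{-1}$. For any $x \in Gr_0\mc{P}$, setting $y = \tilde\eta^{-1}(x)$ and approximating $y$ in the chosen norm by a sequence $y_n \in Gr_0\mc{P}$, operator-norm continuity of $\hat{c}$ on the completion gives $\hat{c}(x) = \hat{c}(\tilde\eta(y)) = \lim_n \hat{c}\circ\eta(y_n)$; each term of this sequence lies in $M_0^{(v)}$, which is operator-norm closed, so the limit does too.

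With $M_0 = M_0^{(v)}$ in hand, the automorphism follows from \cite{N13}: applied to the potential $V = \hat{c}(v) = V_0 + W$, the main transport theorem provides $\alpha \in \Aut(M)$ with $\alpha(C_e) = C_e + \D_e G$ for each $e \in E$, the injectivity of this assignment being furnished by the analytic freeness recorded in Lemma \ref{analytically_free}. The $*$-homomorphism property then gives $\alpha(\hat{c}(x)) = \hat{c}(x)(C+\D_c G) = \hat{c}\circ\eta(x)$ for every $x \in Gr_0\mc{P}$, whence $\alpha(M_0) = M_0^{(v)} = M_0$; the characterization of the free Gibbs state yields $\varphi \circ \alpha = \varphi_V$, as required.

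The main obstacle is the quantitative Neumann-series step: one must choose the Banach completion so that $\tilde\eta$ genuinely maps it into itself, and also verify the precise bound $\|\tilde\eta-\mathrm{id}\|_{\mathrm{op}}\to 0$ as $\|g\|_{R',\sigma}\to 0$. The slack between $R$, $R'$, and $R'+1$ in the hypotheses is exactly what allows the composition and substitution estimates of \cite{N13} to close in a single Banach algebra, at the price of a (possibly much) smaller $\epsilon$ than what was needed merely for the existence of the transport element.
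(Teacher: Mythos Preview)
Your argument for $M_0^{(v)}\subseteq M_0$ and for the final $*$-automorphism is fine and matches the paper. The gap is in the Neumann-series step for the reverse inclusion.

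The linear map $\tilde\eta$ is composition with $C+\D_cG$, and this map is \emph{not} a bounded perturbation of the identity on $(Gr_0\mc{P})^{(R)}$ or $(Gr_0\mc{P})^{(R,\sigma)}$, no matter how small $\|g\|_{R',\sigma}$ is. To see the obstruction in the simplest model, take a single variable and $f=\epsilon X$, so $\tilde\eta(P)(X)=P((1+\epsilon)X)$. On a homogeneous degree-$n$ element, $\tilde\eta$ multiplies by $(1+\epsilon)^n$; hence $\|(\tilde\eta-\mathrm{id})(X^n)\|_R/\|X^n\|_R=(1+\epsilon)^n-1\to\infty$, and $\tilde\eta$ is not even bounded on $\P^{(R)}$. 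In the actual transport setting $\D_cG$ will typically have a degree-one component (e.g.\ whenever $v-v_0$ has a $\mc{P}_1$ component), so this is not an artificial example. The composition estimates of \cite{N13} only give $\tilde\eta\colon\P^{(S)}\to\P^{(R)}$ for $S\geq\|C+\D_cG\|_R>R$; the inherent loss of radius means you cannot close the argument in a single Banach algebra, and the ``slack between $R$, $R'$, $R'+1$'' does not fix this --- it lets you substitute once, not iterate a Neumann series.

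The paper handles the reverse inclusion by an explicit inverse-function construction rather than a Neumann series. One writes $\hat G=\sum_{ue\in L}\beta_{\hat g}(ue)\hat c(u)C_e$, sets $f=\D_cG$, and defines the Picard iterates $H_0=C$, $H_{k+1}=C-f(H_k)$; this is exactly the sequence of Lemma~2.8 in \cite{N13}, which shows (using an intermediate radius $R''=\max\{R,\|Y\|_R\}<R'$) that $H_k\in(\P^{(R'')})^{|E|}$ and $H_k(C+\D_cG)\to C$ in $\|\cdot\|_R$. Via Lemma~\ref{loopify} these $H_k$ are $\hat c(h_k)$ for a recursively defined sequence $h_k\in(Gr_0\mc{P})^{(R'')}$, and for any $x\in Gr_0\mc{P}$ the elements $x_k$ obtained by substituting $h_k$ into $x$ satisfy $\eta(x_k)\to x$ in $\|\cdot\|_R$. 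Since each $\eta(x_k)$ is a $\|\cdot\|_R$-limit of $\eta$ applied to finite truncations of $x_k$ (which lie in $Gr_0\mc{P}$), one gets $\hat c(x)\in M_0^{(v)}$. The point is that the Picard iteration tolerates the loss of radius at each step, while your Neumann series requires boundedness on a fixed space that is simply not available.
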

\begin{proof}
The inclusion $M_0^{(v)}\subset M_0$ was already demonstrated at the beginning of this section. Towards showing the reverse inclusion, fix $x\in Gr_0\mc{P}$ and consider the following recursively defined sequence: $h_0=v_0$ and
\[	
	\begin{tikzpicture}[thick, scale=.5]
		\node[left] at (0,1) {$h_{k+1}= v_0 -$};
		\draw[Box] (0,0.5) rectangle (5,1.5);
		\node at (2.5,1) {$\hat{g}$};
		\node[marked, scale=.8, above left] at (0,1.5) {};
		\draw[thick] (4.6,1.5) --++ (0,2.5);
		
		\draw[Box] (0,2) rectangle (1,3);
		\node at (0.5,2.5) {$h_k$};
		\node[marked, scale=.8, above left] at (0,3) {};
		\draw[thickline] (0.4,3) --++(0,1);
		\draw[thick] (0.75,3) arc(180:0:0.25 cm) --++ (0,-1) arc(360:270:0.25) --++ (-0.25,0) arc(90:180:0.25);
		
		\node at (2.2,2) {$\cdots$};

		\draw[Box] (3,2) rectangle (4,3);
		\node at (3.5,2.5) {$h_k$};
		\node[marked, scale=.8, above left] at (3,3) {};
		\draw[thickline] (3.4,3) --++(0,1);
		\draw[thick] (3.75,3) arc(180:0:0.25 cm) --++ (0,-1) arc(360:270:0.25) --++ (-0.25,0) arc(90:180:0.25);
		
		\node[right] at (5,0.7) {,};
	\end{tikzpicture}
\]
where $\hat{g}=\mathscr{N}g \in (Gr_0\mc{P})^{(R',\sigma)}_{c.s.}$ with $g$ the transport element from $v_0$ to $v$. Letting $R''=\max\{R,\|Y\|_R\}$ ($Y=X+\D\hat{c}(g)$ as in the discussion following Lemma \ref{F_is_a_tangle}), we claim that $h_k\in (Gr_0\mc{P})^{(R'')}$ and if
\[
\begin{tikzpicture}[thick, scale=.5]
\node[left] at (0,1) {$x_k=$};
\draw[Box] (0,0.5) rectangle (4.5,1.5);
\node at (2.25,1) {$x$};
\node[marked, scale=.8, above left] at (0,1.5) {};
\draw[Box] (0,2) rectangle (1,3);
\node at (0.5,2.5) {$h_k$};
\node[marked, scale=.8, above left] at (0,3) {};
\draw[thickline] (0.4,3) --++(0,1);
\draw[thick] (0.75,3) arc(180:0:0.25 cm) --++ (0,-1) arc(360:270:0.25) --++ (-0.25,0) arc(90:180:0.25);
\node at (2.2,2) {$\cdots$};
\draw[Box] (3,2) rectangle (4,3);
\node at (3.5,2.5) {$h_k$};
\node[marked, scale=.8, above left] at (3,3) {};
\draw[thickline] (3.4,3) --++(0,1);
\draw[thick] (3.75,3) arc(180:0:0.25 cm) --++ (0,-1) arc(360:270:0.25) --++ (-0.25,0) arc(90:180:0.25);
\node[right] at (4.5,0.8) {,};
\end{tikzpicture}
\]
then $x_k\in (Gr_0\mc{P})^{(R'')}$, $\eta(x_k)\in (Gr_0\mc{P})^{(R)}$, and $\eta(x_k)\to x$ in the $\|\cdot\|_{R}$-norm.

Indeed, suppose $\hat{G}=\hat{c}(\hat{g}) = \sum_{e\in E}\sum_{ue\in L}\beta_{\hat{g}}(ue) \hat{c}(u) C_e$. From Lemma 2.5 in \cite{N13} it follows that if $f=\D_c \hat{c}(g) = \D_c \Sigma \hat{G}$, then $f_e = \sum_{ue^\circ\in L} \sigma(e^\circ)\beta_{\hat{g}}(ue^\circ) \hat{c}(u)$ and $f\in (\P^{(R')})^{|E|}$. We then see by Lemma \ref{loopify} that $L(f)=\hat{G}=\hat{c}(\hat{g})$. 

Note that
	\begin{align*}
		Y=X+ \D \hat{c}(g) = X+U^{-1}\# \D_c\hat{c}(g)) = X + U^{-1}\# f.
	\end{align*}
We also have for $S\leq R'$
	\begin{align*}
		\|f \|_S \leq  \delta^\frac{1}{2} \|\hat{g}\|_S \leq \delta^\frac{1}{2} \|\hat{g}\|_{R',\sigma},
	\end{align*}
which tends to zero as $\|v-v_0\|_{R'+1,\sigma}\to 0$. So by taking $\epsilon$ sufficiently small we have
	\begin{align*}
		\| Y\|_R \leq R + \|U^{-1}\#f\|_R < R'.
	\end{align*}
We will need this shortly when we appeal to Lemma 2.8 from \cite{N13} because it implies $R''=\max\{R,\|Y\|_R\}<R'$.

For each $k$, define an $|E|$-tuple $H_k$ of (\textit{a priori} formal) power series in the $C_e$ so that $L(H_k)=\hat{c}(h_k)$. In particular, $H_0 = C$ since $L(C)=\hat{c}(v_0)$. Then these $H_k$ satisfy the recursive relationship $H_{k+1}=C-f(H_k)$ since by Lemma \ref{loopify},
\begin{align*}
L(H_{k+1})=\hat{c}(h_{k+1}) &= V_0 - \sum_{e\in E} \sum_{e_1\cdots e_r e\in L} \beta_{\hat{g}}(e_1\cdots e_r e) [H_k]_{e_1}\cdots [H_k]_{e_r} C_e \\
&= V_0 - \sum_{e\in E} \sigma(e^\circ) [f(H_k)]_{e^\circ} C_e = L(C - f(H_k)),
\end{align*}
and the map $L$ is injective.

The sequence $\{U^{-1}\# H_k\}_{k\in\N}$ (now thought of as $|E|$-tuples of power series in the $X_e$), is precisely the sequence considered in Lemma 2.8 of \cite{N13} for $S=R'$ and $f$ replaced with $U^{-1}\# f$. We saw above that $\|U^{-1}\# f\|_{R'}$ can be made arbitrarily small by shrinking $\epsilon$, so let $\epsilon$ be small enough that $\|U^{-1}\# f\|_{R'}<C$ for $C$ as in Lemma 2.8 of \cite{N13}. By replacing $\|Y\|_R$ with $R''$ in the proof of Lemma 2.8, we see that in addition to obtaining $U^{-1}\# H_k(Y)\in \left(\P^{(R)}\right)^{|E|}$ and $U^{-1}\# H_k(Y)\to X$  (with respect to the $\|\cdot\|_{R}$-norm and evaluating $U^{-1}\# H_k$ in the $X_e$), we also have $U^{-1}\# H_k\in (\P^{(R'')})^{|E|}$. Consequently, $H_k(C+\D_c\hat{c}(g))\in \left(\P^{(R)}\right)^{|E|}$, $H_k(C+\D_c\hat{c}(g))\rightarrow C$ (with respect to the $\|\cdot\|_{R}$-norm and evaluating $H_k$ in the $C_e$), and $H_k\in (\P^{(R'')})^{|E|}$.

Now,
\begin{align*}
\|h_k\|_{R''} = \|L(H_k)\|_R \leq |E|\sqrt{\delta}\max_{e\in E} \|[H_k]_eC_{e^\circ}\|_{R''} <\infty;
\end{align*}
that is, $h_k\in (Gr_0\mc{P})^{(R'')}$. Next, if $x$ embeds as $\sum_{u\in L} \beta_x(u) u\in Gr_0\mc{P}^\Gamma$, then by Lemma \ref{loopify}
\[
\hat{c}(x_k)=\sum_{e_1\cdots e_r\in L}\beta_x(e_1\cdots e_r)[H_k]_{e_1}\cdots [H_k]_{e_r},
\]
which implies $x_k\in (Gr_0\mc{P})^{(R'')}$ since $\P^{(R'')}$ is a Banach algebra. Furthermore,
\begin{equation}
\hat{c}\circ \eta(x_k)=\sum_{e_1\cdots e_r\in L}\beta_x(e_1\cdots e_r)[H_k(C+\D_c\hat{c}(g))]_{e_1}\cdots [H_k(C+\D_c\hat{c}(g))]_{e_r},
\end{equation}
which implies $\eta(x_k)\in (Gr_0\mc{P})^{(R)}$ as claimed. Additionally, since $H_k(C+\D_c\hat{c}(g))\to C$ we have
\begin{align*}
\hat{c}\circ \eta(x_k)\to \sum_{e_1\cdots e_r\in L}\beta_x(e_1\cdots e_r)C_{e_1}\cdots C_{e_r} =\hat{c}(x)
\end{align*}
in the $\|\cdot\|_R$-norm, which implies $\eta(x_k)\to x$ in the $\|\cdot\|_R$-norm.

Now, let $\pi_n\colon Gr_0[[\mc{P}]]\to \mc{P}_n$ be the projection onto the $n$th component. For each $k$ and $N$, write $x_k^N=\sum_{n=0}^N \pi_n(x_k)$. Then $\lim_N \|x_k - x_k^N\|_{R''}=0$ for each $k$. Hence
	\begin{align*}
		\lim_{N\to\infty}\| \eta(x_k) - \eta(x_k^N) \|_R &= \lim_{N\to\infty}\| \underbrace{\left[\hat{c}(x_k - x_k^N)\right]}_{\text{as a polynomial in the $C_e$}}(C+\D_c\hat{c}(g))\|_R \\
			&= \lim_{N\to\infty}\| \underbrace{\left[\hat{c}(x_k - x_k^N)\right]}_{\text{as a polynomial in the $X_e$}}(Y)\|_R\\
			&\leq \lim_{N\to\infty}\| \hat{c}(x_k - x_k^N)\|_{R''} = \lim_{N\to\infty}\| x_k - x_k^N \|_{R''}= 0.
	\end{align*}
This shows that $\eta(x_k) \in \overline{\hat{c}\circ\eta( Gr_0\mc{P})}^{\|\cdot\|_{R}}\subset  M_0^{(v)}$ and hence $x\in M_0^{(v)}$ as the $\|\cdot\|_{R}$-limit of the $\eta(x_k)$.

Finally, the $*$-automorphism on $M$ is simply the extension of $C_e\mapsto C_e+\D_e\hat{c}(g)$.
\end{proof}

\begin{rem}
Because of (\ref{trace_relation}), the embedding $\hat{c}\colon (Gr_0\mc{P},Tr_0)\hookrightarrow (M_0,\varphi)$ is not trace-preserving. However, restricting to either $Gr_0^+\mc{P}$ and $Gr_0^-\mc{P}$ and normalizing $\hat{c}$ by $\frac{1}{|V_\pm|}$ does yield a trace-preserving embedding. Similarly, $\frac{1}{|V_\pm|}\hat{c}\circ\eta$ is a trace-preserving embedding of $(Gr_0\mc{P},Tr_0^{(v)})\hookrightarrow (M_{0,\pm}^{(v)},\varphi)$.

Since it is clear that Theorem \ref{vNas_are_equal} also gives the equalities $M_{0,\pm}=M_{0,\pm}^{(v)}$, we observe that $\frac{1}{|V_\pm|}\hat{c}$ and $\frac{1}{|V_\pm|}\hat{c}\circ\eta$ are distinct embeddings of $Gr_0^\pm\mc{P}$ into $\B(\mc{F})$ which generate the same von Neumann algebra.
\end{rem}

\begin{rem}
Since the proof Theorem \ref{vNas_are_equal} relied only on operator norm convergence, the result also holds when the von Neumann algebras are replaced with the $C^*$-algebras.
\end{rem}


\subsection{Tower of non-commutative probability spaces}

In this section we recall the embeddings of $Gr_k\mc{P}^\Gamma$ into $\B(\mc{F})$ considered in \cite{GJS10}, and show that perturbing these embeddings by the transport element $g$ still yields the same von Neumann algebra.

For $k\geq 1$ consider the map $\hat{c}_k\colon Gr_k\mc{P}^\Gamma\to \B(\mc{F})$ defined by
\begin{align*}
\hat{c}_k(uf_k^\circ\cdots f_1^\circ e_1\cdots e_k) = \hat{\ell}(e_1)\cdots \hat{\ell}(e_k)\hat{c}(u)\hat{\ell}(f_k)^*\cdots \hat{\ell}(f_1)^*,
\end{align*}
where $e_1,\ldots, e_k,f_1,\ldots, f_k\in E$ and $uf_k^\circ\cdots f_1^\circ e_1\cdots e_k \in L$. We let $\hat{c}_0=\hat{c}$. The reason for the apparent rotation of the edges in the definition of $\hat{c}_k$ is that when we represent $x\in Gr_k\mc{P}$ as the diagram
\[
\begin{tikzpicture}[thick, scale=.5]
\draw[Box] (0, 0) rectangle (2,1);
\node at (1,0.5) {$x$};
\node[marked, scale=.8, above left] at (0,1) {};
\draw[thickline] (0, 0.5) --++ (-0.5,0);
\draw[thickline] (2, 0.5) --++ (0.5,0);
\draw[thickline] (1,1) --++ (0,0.5);
\end{tikzpicture}
\]
we want to send the strings on the left to operators of the form $l(e)$, the strings on the right to operators of the form $l(e^\circ)^*$, and the strings on top to operators of the form $\hat{c}(e)$. Because $l(f)^*l(e)=\delta_{f=e}\|e\|^2$, $\hat{c}_k$ is a $*$-homomorphism from $Gr_k\mc{P}^\Gamma$ (with multiplication $\wedge_k$) to $\mathfrak{M}_k\subset \B(\mc{F})$ where
\[
\mathfrak{M}_k=\text{span}\{ \hat{\ell}(e_1)\cdots \hat{\ell}(e_k)\hat{c}(u)\hat{\ell}(f_k)^*\cdots \hat{\ell}(f_1)^*\colon e_1\cdots e_k u f_k^\circ\cdots f_1^\circ \in L\}.
\]

Also considered in \cite{GJS10} was the trace $\varphi_k\colon \mathfrak{M}_k\to \C$ defined by
\[
\varphi_k(\cdot) = \delta^{-k}\sum_{f_1,\ldots,f_k\in E} \left(\frac{\mu(s(f_1))}{\mu(t(f_k))}\right)^{\frac{1}{2}} \<f_1\otimes\cdots \otimes f_k,\ \cdot\  f_1\otimes\cdots\otimes f_k\>_{\mc{F}},
\]
which satisfies $\varphi_k(\hat{c}(x))= \sum_{v\in V} [Tr_k(x)](v)$ for $x\in Gr_k{\mc{P}}$, and the embeddings $i^{k-1}_k\colon \mathfrak{M}_{k-1}\to \mathfrak{M}_k$ defined by
\[
i^{k-1}_k( \hat{c}_{k-1}(u) ) = \sum_{eue^\circ\in L} \sigma(e)^{-1} \hat{\ell}(e)\hat{c}_{k-1}(u)\hat{\ell}(e)^*,
\]
so that $\varphi_k\circ i^{k-1}_k = \varphi_{k-1}$.

These inclusion maps correspond to the inclusion tangles $I^{k-1}_k\colon Gr_{k-1}\mc{P}\to Gr_k\mc{P}$ defined by
\[
\begin{tikzpicture}[thick, scale=.5]
\node[left] at (-0.5,0.5) {$I^{k-1}_k(x)=$};
\draw[Box] (0, 0) rectangle (2,1);
\node at (1,0.5) {$x$};
\node[marked, scale=.8, above left] at (0,1) {};
\draw[thickline] (0, 0.5) --++ (-0.5,0);
\draw[thickline] (2, 0.5) --++ (0.5,0);
\draw[thickline] (1,1) --++ (0,0.5);
\draw[thick] (-0.5,-0.255) --++ (3,0);
\node[right] at (2.5,0) {,};
\end{tikzpicture}
\]
in the sense that $i^{k-1}_k\circ\hat{c}_{k-1} = \hat{c}_k\circ I^{k-1}_k$.

For each $k\geq 0$, let $M_k=W^*(\hat{c}_k(Gr_k\mc{P})) \subset \mathfrak{M}_k$ and $M_{k,\pm}=W^*(\hat{c}_k(Gr_k^\pm\mc{P}))$. In \cite{GJS10}, the embedding $c\colon \C\<e\in E\>\to \B(\mc{F}_A)$ rather than $\hat{c}$ was used to define these von Neumann algebras on the GNS space corresponding to the weight $\phi$ from Section \ref{GJS_construction}. However, since $\varphi\circ\hat{c}=\phi\circ c$ these are isomorphic to the $M_k$ defined here. Consequently, Theorem 8 of \cite{GJS10} implies that the standard invariant of the subfactors $i^{k-1}_k(M_{k-1,+})\subset M_{k,+}$ is isomorphic to the subfactor planar algebra $\mc{P}$.

Let $v\in (Gr_0\mc{P})^{(R'+1,\sigma)}_{c.s.}$ be sufficiently close to $v_0$ so that the transport element $g$ from $v_0$ to $v$ exists. We then define $\eta_k$ on $Gr_k\mc{P}$ by
\[
	\begin{tikzpicture}[thick, scale=.5]
		\node[left] at (-1,2) {$\eta_k(x)=$};

		\draw[Box] (0, 3) rectangle (3,4);
		\node at (1.5,3.5) {$v_0 + g$};
		\node[marked, scale=.8, above left] at (0,4) {};
		\draw[thickline] (0.75, 4) --++(0,1.5);
		\draw[thick] (1.5, 4) arc(180:0: 1.3cm and 1cm) --+(0,-1) arc (0:-90: 1.2 cm and 1cm) -- (2, 2) arc(90:180:.5cm);
		\draw[thickline] (2.25,4) arc(180:0: .6cm and .5cm) --++(0,-1) arc(0:-90:.5cm) --++(-3,0) arc(-90:-180:.5cm) --++(0,2.5);

		\node at (5.25,3.75) {$\cdots$};
	
		\draw[Box] (7, 3) rectangle (10,4);
		\node at (8.5,3.5) {$v_0 + g$};
		\node[marked, scale=.8, above left] at (7,4) {};
		\draw[thickline] (7.75, 4) --++(0,1.5);
		\draw[thick] (8.5, 4) arc(180:0: 1.3cm and 1cm) --++(0,-1) arc (0:-90: 1.2 cm and 1cm) -- (8.75, 2) arc(90:180:.5cm);
		\draw[thickline] (9.25,4) arc(180:0: .6cm and .5cm) --++(0,-1) arc(0:-90:.5cm) --++(-3,0) arc(-90:-180:.5cm) --++(0,2.5);

		\draw[Box] (1,.5) rectangle (9,1.5);
		\node at (5,1) {$x$};
		\node[marked,scale=.8, above left] at (1,1.5) {};
		\draw[thickline] (1,1) --++ (-1,0);
		\draw[thickline] (9,1) --++ (1,0);
		\node at (15, 2) {$x\in Gr_k\mc{P}$};
	\end{tikzpicture}
\]
(with $\eta_0=\eta$). Note that $\eta_k(x\wedge_k y)= \eta_k(x)\wedge_k \eta_k(y)$ for $x,y\in Gr_k\mc{P}$ and that $I^{k-1}_k$ intertwines $\eta_{k-1}$ and $\eta_k$.

\begin{thm}\label{k_vNas_are_equal}
Let $\epsilon>0$ be as in Theorem \ref{vNas_are_equal}. For each $x\in Gr_k\mc{P}$, $\hat{c}_k\circ\eta_k(x)\in M_k$. Moreover, if $M_k^{(v)}=W^*(\hat{c}_k\circ\eta_k(Gr_k\mc{P}))$ then $M_k^{(v)}=M_k$ and $M_{k,\pm}^{(v)}=M_{k,\pm}$. Finally, the inclusions in the tower $\{M_k^{(v)}\}_{k\geq 0}$ given by the maps $\{i_k^{k-1}\}_{k\geq 0}$ are the same as in the tower $\{M_k\}_{k\geq 0}$; that is, $\mc{P}$ is recovered as the standard invariant of the tower $\{M_{k,+}^{(v)}\}_{k\geq 0}$.
\end{thm}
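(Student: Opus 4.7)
The plan is to reduce each of the three claims to the $k=0$ case handled in Theorem~\ref{vNas_are_equal}, exploiting the fact that $\eta_k$ modifies only the top strings of $x\in Gr_k\mc{P}$, leaving the $k$ left and $k$ right strings untouched. Concretely, if $x$ is decomposed as a sum of loops $u f_k^\circ\cdots f_1^\circ e_1\cdots e_k\in L$, then a direct inspection of the definitions of $\hat{c}_k$ and $\eta_k$ gives the factorization
\[
\hat{c}_k\circ\eta_k(x)=\sum \hat{\ell}(e_1)\cdots\hat{\ell}(e_k)\,\bigl[\hat{c}(u)\bigr](C+\D_c\hat{c}(g))\,\hat{\ell}(f_k)^*\cdots\hat{\ell}(f_1)^*,
\]
where $[\hat{c}(u)](C+\D_c\hat{c}(g))$ denotes the polynomial $\hat{c}(u)$ in the $C_e$ evaluated at the perturbed generators. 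This is exactly the middle factor controlled by Theorem~\ref{vNas_are_equal}, now sandwiched between bounded operators.

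For the inclusion $M_k^{(v)}\subset M_k$, I would approximate $g$ by polynomials $g_n\in Gr_0\mc{P}$ in the $\|\cdot\|_{R'}$-norm and replace $g$ with $g_n$ throughout. Each resulting element $\hat{c}_k\circ\eta_k^{(n)}(x)$ lies in $\hat{c}_k(Gr_k\mc{P})\subset M_k$, and since $\|\cdot\|_R$ dominates the operator norm while the $\hat{\ell}(e),\hat{\ell}(e)^*$ are bounded, the resulting operator-norm convergence places $\hat{c}_k\circ\eta_k(x)$ in $M_k$.

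For the reverse inclusion $M_k\subset M_k^{(v)}$, I would mimic the recursive construction from the proof of Theorem~\ref{vNas_are_equal}: take the elements $h_j\in (Gr_0\mc{P})^{(R'')}$ built there (with $R''=\max\{R,\|Y\|_R\}$) and define $x_j$ by attaching $h_j$ blocks to the top strings of $x$ only. The factorization above then yields
\[
\hat{c}_k\circ\eta_k(x_j)=\sum\hat{\ell}(e_1)\cdots\hat{\ell}(e_k)\,\bigl[H_j(C+\D_c\hat{c}(g))\bigr]\,\hat{\ell}(f_k)^*\cdots\hat{\ell}(f_1)^*,
\]
where the $|E|$-tuples $H_j$ are those from that proof, satisfying $H_j(C+\D_c\hat{c}(g))\to C$ in $\|\cdot\|_R$. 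After the same two-step truncation (first in $N$ to pass from $(Gr_k\mc{P})^{(R'')}$ back into $Gr_k\mc{P}$, then in $j$), operator-norm convergence gives $\hat{c}_k(x)\in M_k^{(v)}$. Restricting the whole argument to $Gr_k^{\pm}\mc{P}$ yields $M_{k,\pm}=M_{k,\pm}^{(v)}$.

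Finally, the standard-invariant statement reduces to the identity
\[
i^{k-1}_k\circ\hat{c}_{k-1}\circ\eta_{k-1}=\hat{c}_k\circ I^{k-1}_k\circ\eta_{k-1}=\hat{c}_k\circ\eta_k\circ I^{k-1}_k,
\]
where the first equality is the intertwining $i^{k-1}_k\circ\hat{c}_{k-1}=\hat{c}_k\circ I^{k-1}_k$ recorded in the excerpt and the second holds because $I^{k-1}_k$ only adds a horizontal string below $x$, commuting with the attachment of $v_0+g$ blocks to the top. Hence the tower $\{M_k^{(v)}\}$ has the same inclusion maps as $\{M_k\}$, and by Theorem~8 of \cite{GJS10} its standard invariant is $\mc{P}$. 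The main obstacle I anticipate is keeping the planar-tangle bookkeeping clean enough to extract the top-string factorization of $\hat{c}_k\circ\eta_k$ rigorously, so that the recursive approximation from Theorem~\ref{vNas_are_equal} can be quoted wholesale rather than redone at each level $k$.
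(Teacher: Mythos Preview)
Your proposal is correct and follows essentially the same approach as the paper: both exploit the factorization of $\hat{c}_k\circ\eta_k(x)$ as a finite sum of $\hat\ell$-sandwiches around the $k=0$ middle factor $[\hat c(u)](C+\D_c\hat c(g))$, then run the polynomial approximation of $v_0+g$ for the inclusion $M_k^{(v)}\subset M_k$ and invoke the recursive $h_j$/$H_j$ construction from Theorem~\ref{vNas_are_equal} for the reverse inclusion, with the standard-invariant claim following from the intertwining $I_k^{k-1}\circ\eta_{k-1}=\eta_k\circ I_k^{k-1}$. The only cosmetic difference is that the paper handles the reverse inclusion in one sentence by pointing to the fact, already established inside the proof of Theorem~\ref{vNas_are_equal}, that each $\hat c(u)$ is a $\|\cdot\|_R$-limit of elements $\hat c\circ\eta(u')$, whereas you spell out the two-step truncation again.
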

\begin{proof}
Let $\{h_n\}_{n\geq 0}\subset Gr_0\mc{P}$ be a sequence converging to $v_0+g$ with respect to the $\|\cdot\|_{R',\sigma}$-norm. Given $x\in Gr_k\mc{P}$, suppose it embeds as $\sum_{uu_2^\circ u_1\in L} \beta_x(uu_2^\circ u_1) uu_2^\circ u_1$ where $u_1,u_2$ are paths of length $k$. Define $\hat{\ell}(e_1\cdots e_k):=\hat{\ell}(e_1)\cdots \hat{\ell}(e_k)$ and $\|e_1\cdots e_k\|:=\|e_1\|\cdots \|e_k\|$. Then
	\begin{align*}
		\hat{c}_k\circ\eta_k(x) = \sum_{uu_2^\circ u_1\in L} \beta_x(uu_2^\circ u_1) \hat{\ell}(u_1) \hat{c}\circ\eta(u) \hat{\ell}(u_2)^*.
	\end{align*}
Now, $\hat{c}\circ\eta(u)=[\hat{c}(u)](C+\D_c\hat{g})$ is the $\|\cdot\|_R$-norm limit (and hence operator norm limit) of $[\hat{c}(u)](\D_c\hat{c}(h_n))$. Also
	\begin{align*}
		\sum_{uu_2^\circ u_1\in L} \beta_x(uu_2^\circ u_1) \hat{\ell}(u_1)[\hat{c}(u)](\D_c\hat{c}(h_n)) \hat{\ell}(u_2)^* = \hat{c}_k(x_n),
	\end{align*}
where
\[
\begin{tikzpicture}[thick, scale=.5]
\node[left] at (-1,2) {$x_n=$};

\draw[Box] (0, 3) rectangle (3,4);
\node at (1.5,3.5) {$h_n$};
\node[marked, scale=.8, above left] at (0,4) {};
\draw[thickline] (0.75, 4) --++(0,1.5);
\draw[thick] (1.5, 4) arc(180:0: 1.3cm and 1cm) --+(0,-1) arc (0:-90: 1.2 cm and 1cm) -- (2, 2) arc(90:180:.5cm);
\draw[thickline] (2.25,4) arc(180:0: .6cm and .5cm) --++(0,-1) arc(0:-90:.5cm) --++(-3,0) arc(-90:-180:.5cm) --++(0,2.5);

\node at (5.25,3.75) {$\cdots$};

\draw[Box] (7, 3) rectangle (10,4);
\node at (8.5,3.5) {$h_n$};
\node[marked, scale=.8, above left] at (7,4) {};
\draw[thickline] (7.75, 4) --++(0,1.5);
\draw[thick] (8.5, 4) arc(180:0: 1.3cm and 1cm) --++(0,-1) arc (0:-90: 1.2 cm and 1cm) -- (8.75, 2) arc(90:180:.5cm);
\draw[thickline] (9.25,4) arc(180:0: .6cm and .5cm) --++(0,-1) arc(0:-90:.5cm) --++(-3,0) arc(-90:-180:.5cm) --++(0,2.5);

\draw[Box] (1,.5) rectangle (9,1.5);
\node at (5,1) {$x$};
\node[marked,scale=.8, above left] at (1,1.5) {};
\draw[thickline] (1,1) --++ (-1,0);
\draw[thickline] (9,1) --++ (1,0);
\node[right] at (11, 2) {$\in Gr_k\mc{P}$.};
\end{tikzpicture}
\]
Thus
\begin{align*}
\left\| \hat{c}_k\circ\eta_k(x) -\hat{c}_k(x_n) \right\| \leq \sum_{uu_2^\circ u_1\in L} |\beta_x(uu_2^\circ u_1)| \|u_1\|\|u_2\| \|\hat{c}\circ\eta(u) - [\hat{c}(u)](\D_c\hat{c}(h_n)) \| \to 0,
\end{align*}
since $x \in Gr_k\mc{P}$ has finite support in $Gr_k\mc{P}^\Gamma$. Thus $M_k^{(v)}\subset M_k$.

The reverse inclusion follows from the same argument since we showed in the proof of Theorem \ref{vNas_are_equal} that $\hat{c}(u)$ is the $\|\cdot\|_R$-norm limit of elements of the form $\hat{c}\circ\eta(u')$.

The final statements are immediate from the equalities established above, but we also note that they follow from the fact that $I_k^{k-1}$ intertwines $\eta_k$ and $\eta_{k-1}$ for each $k$.
\end{proof}

\begin{rem}
As with Theorem \ref{vNas_are_equal}, Theorem \ref{k_vNas_are_equal} also holds when the von Neumann algebras are replaced with the corresponding $C^*$-algebras.
\end{rem}

One should think of the embeddings $\hat{c}_k\circ\eta_k$, $k\geq 0$ as small perturbations of the embeddings $\hat{c}_k$ of $Gr_k\mc{P}$. Thus, Theorems \ref{vNas_are_equal} and \ref{k_vNas_are_equal} say that when the perturbation is small enough, the von Neumann algebras generated by the $Gr_k^+\mc{P}$ are the same and we can recover the subfactor planar algebra $\mc{P}$ as the standard invariant of the subfactors $i^{k-1}_k(M_{k-1,+}^{(v)})\subset M_{k,+}^{(v)}$.

Suppose $\tau_0\colon Gr_0^+\mc{P}\to \C$ is a trace and let $f\in Gr_0^+[[\mc{P}]]$ be such that $\tau_0(x)=\<f^*, x\>$. Recall that we can extend this to a series of traces $\tau_k\colon Gr_k^+\mc{P}\to \C$, $k\geq 0$, via (\ref{duality}). Let $(\H_k,\pi_k,\xi_k)$ be the GNS representation of $(Gr_k^+\mc{P},\wedge_k)$ with respect to $\tau_k$, and let $L_k=\pi_k(Gr_k^+\mc{P})''\subset \B(\H_k)$. The inclusion tangles $I_k^{k-1}$ induce inclusions $\hat{i}_k^{k-1}\colon \pi_{k-1}(Gr_{k-1}^+\mc{P})\to \pi_k( Gr_k^+\mc{P})$ such that $\hat{i}_k^{k-1}\circ \pi_{k-1} = \pi_k \circ I_k^{k-1}$. Thus when the $L_k$ are factors, one can consider the standard invariant associated to these inclusions. The following corollary shows that if $f$ satisfies the Schwinger-Dyson planar tangle with a potential $v$ close enough to $v_0$, then $L_k\cong M_{k,+}$ for each $k\geq 0$ and hence the standard invariant for $\{L_{k}\subset L_{k+1}\}_{k\geq 0}$ is simply $\mc{P}$.

\begin{cor}
Let $\epsilon>0$ be as in Theorem \ref{vNas_are_equal} and $\{\tau_k\}_{k\geq 0}$ and $f\in Gr_0^+[[\mc{P}]]$ as above. Suppose $f$ satisfies the Schwinger-Dyson planar tangle with potential $v\in (Gr_0\mc{P})^{(R'+1,\sigma)}_{c.s.}$. If $\|v-v_0\|_{R,\sigma}<\epsilon$, then there exists trace-preserving embeddings $(Gr_k^+\mc{P},\tau_k)\hookrightarrow (\mf{M}_k,\varphi_k)$ for each $k$, and the von Neumann algebra generated by $Gr_k^+\mc{P}$ under this embedding is $M_k$. Moreover, $L_k\cong M_{k,+}$ for each $k\geq 0$.
\end{cor}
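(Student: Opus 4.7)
The plan is to reduce the corollary to Theorem \ref{k_vNas_are_equal} by showing that $f$ must coincide with the $+$-component $(TL_\infty^{(v)})_+$ of the element $TL_\infty^{(v)} \in Gr_0[[\mc{P}]]$ produced by Proposition \ref{solving_S-D_planar_tangle}. Once this identification is secured, the traces $\tau_k$ induced from $f$ via the duality (\ref{duality}) agree with $Tr_{k,+}^{(v)} := Tr_{k,+}\circ \eta_k$ for every $k$, the embeddings $\hat{c}_k\circ\eta_k\colon Gr_k^+\mc{P}\to\mathfrak{M}_k$ become the desired trace-preserving maps (after the same $|V_+|$ normalization as in the remark following Theorem \ref{vNas_are_equal}), and by Theorem \ref{k_vNas_are_equal} their images generate $M_{k,+}^{(v)} = M_{k,+}$; standard GNS functoriality then yields $L_k \cong M_{k,+}$.

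For the key uniqueness step, I would associate to $f$ a tracial linear functional $\psi_f$ on $M_{0,+}$ defined by $\psi_f(\hat{c}(x)) = |V_+|\<f^*, x\>_\mc{P}$ on the norm-dense subalgebra $\hat{c}(Gr_0^+\mc{P})$; this is the functional corresponding to $f$ under the pairing (\ref{sum_pairing}) via Remark \ref{inner_product_comparison}. Using the fact that $\tau_0$ is tracial and the inclusion $(Gr_0\mc{P})^{(R)}\subset M_0$ that featured in the proof of Theorem \ref{vNas_are_equal}, $\psi_f$ extends continuously to a tracial state on $M_{0,+}$. Reading Lemma \ref{S-D_planar_tangle_lemma} in reverse, the hypothesis that $f$ satisfies the Schwinger-Dyson planar tangle with potential $v$ converts into $\psi_f$ satisfying the Schwinger-Dyson equation (\ref{circular_schwinger-dyson}) with potential $\hat{c}(v)$. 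Since $\|\hat{c}(v) - V_0\|_{R,\sigma} < \epsilon$ falls in the regime in which Theorem~2.12 of \cite{N13} guarantees a unique free Gibbs state, and since Proposition \ref{solving_S-D_planar_tangle} provides $\varphi\circ\hat{c}\circ\eta$ as another such solution, uniqueness forces $\psi_f = \varphi\circ\hat{c}\circ\eta$, so that $f = (TL_\infty^{(v)})_+$.

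The main obstacle will be this uniqueness argument, and specifically the continuous extension of $\psi_f$ from $\hat{c}(Gr_0^+\mc{P})$ to a genuine free Gibbs state on all of $M_{0,+}$, together with the verification that the Schwinger-Dyson planar tangle translates into the full Schwinger-Dyson equation on $\P$ (and not merely on the loop-polynomials coming from $\hat{c}(Gr_0\mc{P})$). The tracial assumption on $\tau_0$ is essential here: it provides both the $\varphi$-traciality of the extended $\psi_f$ (so that the uniqueness statement of \cite{N13} applies) and the symmetry needed to reconcile the two-$f$ structure of the tangle (\ref{S-D_planar_tangle}) with the one-$\psi_f$ structure of (\ref{circular_schwinger-dyson}).
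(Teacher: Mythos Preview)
Your high-level plan coincides with the paper's: produce the transport element $g$ from $v$, set the embeddings equal to $\frac{1}{|V_+|}\hat c_k\circ\eta_k$, and invoke Theorem~\ref{k_vNas_are_equal} for the von Neumann algebra identification; the isomorphism $L_k\cong M_{k,+}$ then drops out of trace preservation and GNS uniqueness. The paper's proof is literally two sentences and simply asserts that $\frac{1}{|V_+|}\hat c_k\circ\eta_k$ is trace-preserving for $\tau_k$, so it is implicitly identifying $f$ with the $+$-part of $TL_\infty^{(v)}$ without comment. You are right that this identification is the crux, and you are being more careful than the paper in flagging it.

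Where your proposal diverges is in \emph{how} you justify that identification. You want to promote $f$ to a state $\psi_f$ on $M_{0,+}$, check that $\psi_f$ satisfies the operator Schwinger--Dyson equation~(\ref{circular_schwinger-dyson}), and then invoke the free Gibbs state uniqueness of \cite{N13}. The two obstacles you isolate are genuine and not easily dispatched: there is no a priori operator-norm continuity for $\psi_f$ (positivity of $\tau_0$ alone does not give it), and more seriously, (\ref{circular_schwinger-dyson}) is an identity over all of $\mathscr P^{|E|}$ while the Schwinger--Dyson planar tangle only tests against $x\in Gr_0\mc P$, which embeds as a proper subspace of $\mathscr P$. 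Lemma~\ref{S-D_planar_tangle_lemma} goes only in the direction Gibbs state $\Rightarrow$ planar tangle, and reversing it on the nose would require extending $\psi_f$ off $\hat c(Gr_0\mc P)$ in a way the hypotheses do not obviously supply.

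The cleaner route---and the one the paper presumably has in mind---is to argue uniqueness \emph{within} $Gr_0[[\mc P]]$ directly from the planar relation (\ref{S-D_planar_tangle}). Writing $v=v_0+w$, the $v_0$-contribution to the left side reproduces $\langle f^*,x\rangle_{\mc P}$, so (\ref{S-D_planar_tangle}) expresses $\langle f^*,x\rangle_{\mc P}$ in terms of pairings of $f$ with strictly lower-degree pieces of $x$ (the right side) together with $w$-corrections; for $\|w\|_{R'+1,\sigma}$ small this is a contraction on the relevant Banach space of moment sequences, exactly as in the fixed-point arguments of \cite{GJSZJ12} and \cite{N13}. That gives $f=(TL_\infty^{(v)})_+$ without ever leaving the planar algebra, after which your reduction to Theorem~\ref{k_vNas_are_equal} goes through verbatim.
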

\begin{proof}
Let $g\in (Gr_0\mc{P})^{(R',\sigma)}_{c.s.}$ be the transport element from $v_0$ to $v$. Then the embeddings are simply $\{\frac{1}{|V_+|}\hat{c}_k\circ \eta_k\}_{k\geq 0}$ and the equality of the generated von Neumann algebras follows from Theorem \ref{k_vNas_are_equal}. The isomorphism $L_k\cong M_{k,+}$ follows from the fact that both representations $\pi_k$ and $\hat{c}_k\circ \eta_k$ are trace-preserving.
\end{proof}

\end{document}